\numberwithin{equation}{section}
\newcounter{num}
\newcommand{\Rnum}[1]{\setcounter{num}{#1} \Roman{num}}
\newcommand{\1}{\mbox{1}\hspace{-0.25em}\mbox{l}}
\newtheorem{defi}{Definition}
\newtheorem{theo}{Theorem}
\newtheorem{lemm}{Lemma}
\newtheorem{remark}{Remark}
\newcommand{\blue}{\color[rgb]{0,0,1}}
\def\df{\mathrm{d}}
\def\wh#1{\widehat{#1}} 
\def\ol#1{\overline{#1}} 
\title{Statistical inference for discretely sampled stochastic functional differential equations with small noise}
\author{Hiroki Nemoto\footnote{E-mail: hiroki-n17@fuji.waseda.jp}\\
{\it Graduate School of Fundamental Science and Engineering, Waseda University}\vspace{0.2cm}\\
Yasutaka Shimizu\footnote{E-mail: shimizu@waseda.jp} \\ 
{\it Department of Applied Mathematics, Waseda University}
}
\date{\today}
\begin{document}
\maketitle

\begin{abstract} 
\noindent
Estimating parameters of drift and diffusion coefficients for multidimensional stochastic delay equations with small noise are considered. 
The delay structure is written as an integral form with respect to a delay measure. 
Our contrast function is based on a local-Gauss approximation to the transition probability density of the process. We show consistency and asymptotic normality of the minimum-contrast estimator when a small dispersion coefficient $\varepsilon\to 0$ and sample size $n\to\infty$ simultaneously.
\begin{flushleft}
{\it Key words:} Stochastic delay equation; functional delay; discrete observations; minimum contrast estimator; small noise; asymptotic normality. 
\vspace{1mm}\\
{\it MSC2020:} {\bf Primary 62M20}; Secondary 62F12, 62E20.  
\end{flushleft}
\end{abstract}

\section{Introduction} \label{sec:introduction}
Let $(\Omega,\mathcal{F},\{\mathcal{F}_t\},P)$ be a stochastic basis satisfying the usual conditions. We consider a family of $d$-dimensional stochastic functional differential equations (SFDEs): for $\varepsilon\in(0,1]$, 
\begin{align}
\left\{
\begin{array}{l}
\df X_t^\varepsilon=b\big(X_t^\varepsilon,H(X_{t-\cdot}^\varepsilon),\theta_0\big)\,\df t+\varepsilon\sigma\big(X_t^\varepsilon,H(X_{t-\cdot}^\varepsilon),\beta_0\big)\,\df W_t, \quad t\in[0,1];\\
X_t^\varepsilon=\phi^\varepsilon(t), \quad t\in[-\delta, 0); \\
X_0^\varepsilon=\phi^\varepsilon(0)=x_0^\varepsilon, \label{SFDEs}
\end{array}
\right.
\end{align}
 
where $\theta_0=(\alpha_0,\beta_0)\in \mathring{\Theta}$ with $\Theta=\ol{\Theta}_{\alpha}\times\ol{\Theta}_{\beta}$ for open bounded convex subsets $\Theta_{\alpha}$ and $\Theta_{\beta}$ of $\mathbb{R}^p$ and $\mathbb{R}^q$, respectively; 
$b=(b_1,\dots,b_d)\colon\mathbb{R}^d\times\mathbb{R}^d\times\Theta\to\mathbb{R}^d$ and $\sigma=(\sigma_{ij})_{d\times r}\colon\mathbb{R}^d\times\mathbb{R}^d\times\ol{\Theta}_{\beta}\to\mathbb{R}^{d}\otimes \mathbb{R}^r$ are known functions; 
$W_t=(W_t^1, \dots, W_t^r)$ is an $r$-dimensional Wiener process.
Moreover, $\phi^\varepsilon(t)$ is $\mathcal{F}_0$ measurable $\mathbb{R}^d$-valued random variable for each $t \in [-\delta,0]$ and $\varepsilon \in [0,1]$. Letting $C(A; B)$ be the space of continuous functions from $A$ to $B$, 
we also define a functional $H\colon C([0,\delta];\mathbb{R}^d)\to\mathbb{R}^d$ as follows: for a continuous function $F_{t-\cdot}: u\mapsto F_{t-u}$ in $u\in[0,\delta]$, 
$$
H(F_{t-\cdot})=\int_0^{\delta}F_{t-u}\,\mu(\df u), 
$$
where $\mu$ is a finite measure on $[0,\delta]$. 
We call the equation \eqref{SFDEs} {\it stochastic functional delay equations (SFDEs)} since the functional $H$ provides a delay structure in the SDE. Especially, when the measure $\mu$ is a Dirac measure, the SDE is just called a {\it stochastic delay differential equation (SDDE)}. 

We assume that the process $\{X_t^\varepsilon\}$ is observed at regularly spaced time points $\{t_k=k/n\,|\,k=0,\dots,n\}\cup \{-i/n\,|\,i=1,\dots,\lfloor n\delta\rfloor\}$ with the floor function $\lfloor\cdot\rfloor$. 
Our goal is to construct an estimator of $\theta_0$ from discrete observations $\{X_{t_k}^\varepsilon\}\cup \{X_{-i/n}^\varepsilon\}$, 
and to investigate the asymptotic behavior when $\varepsilon \to 0$ as well as $n\to \infty$. 

There are many applications of (deterministic) delay differential equations (DDEs) in biology, epidemiology, and physics. 
For example, Mackey and Glassa \cite{mackey} consider a homogeneous population of mature circulating cells (white blood cell, red blood cell, or platelet). Hu and Wang \cite{hu} take account of the dynamics of controlled mechanical systems, among others. 
Due to those, their corresponding stochastic versions (SDDEs) of those differential equations also has been well investigated. 
Volterra \cite{volterra} considers predator-prey models; 
Guttorp and Kulperger \cite{guttorp} take the effect of random elements into account, and they change Volterra's model from DDEs to SDDEs; 
Fu \cite{fu} considers a stochastic SIR model with delay for an epidemic model, and also, De la Sen and Ibeas \cite{de la sen} consider SE(Is)(Ih)AR model as a COVID-19 model. From a theoretical point of view, Mohammed \cite{mohhamed}, and Arriojas \cite{arriojas} have generalized those models to SFDEs. 

\if0
We are motivated by Fu \cite{fu}, where  ``stochastic" SIAR model with delay is considered:
\begin{equation*}
\left\{
\begin{aligned}
\df S_t&=\left\{\lambda-\bigg(\mu+\sum_{j=1}^{2}b_j H(I_{t-\cdot})+\sum_{j=3}^{4}b_j H(A_{t-\cdot})\bigg)S_t+\eta R_t\right\}\,\df t\\
&\quad +\sum_{j=1}^{2}\beta_j H(I_{t-\cdot})S_t\,\df W_t^j+\sum_{j=3}^{4}\beta_j H(A_{t-\cdot})S_t\,\df W_t^j+\beta_5 S_t\,\df W_t^5,\\
\df I_t&=\bigg(\sum_{j=1}^{2}b_j H(I_{t-\cdot})S_t-(\mu+\gamma_1+\delta)I_t\bigg)\,\df t+\sum_{j=1}^{2}\beta_j H(I_{t-\cdot})S_t\,\df W_t^j+\beta_5 I_t\,\df W_t^5,\\
\df A_t&=\bigg(\sum_{j=3}^{4}b_j H(A_{t-\cdot})S_t-(\mu+\gamma_2)A_t\bigg)\,\df t+\sum_{j=3}^{4}\beta_j H(A_{t-\cdot})S_t\,\df W_t^j+\beta_5 A_t\,\df W_t^5,\\
\df R_t&=\left(\gamma_1 I_t+ \gamma_2 A_t -(\mu+\eta)R_t\right)dt+\beta_5 R_t\,\df W_t, 
\end{aligned}
\right.
\end{equation*}
where $S$ is the ratio of the members susceptible to the disease, $I$ is of the infectious or infective members (who have external symptoms), $A$ is of the members who are infective but have no significant external symptoms (so-called ``asymptotic members''), $R$ is of the recovered members, and all parameters are positive constants. 
The transition of patients with COVID-19 would be small fluctuations. Therefore, it would be natural to consider SFDEs with small noise. 
Our first motivation had come from the statistical argument for such SFDEs during recent COVID-19 pandemic in the world.
\fi

Turnning our attention to statistical inference for SDDEs and SFDEs, there have been many works so far. 
Gunshchin and K\"{u}chler \cite{gunshchin} and K\"{u}chler and Kutoyants \cite{kuchler} study the asymptotic behavior of the maximum likelihood type estimators; K\"{u}chler and S{\o}rensen \cite{kuchler2} consider the pseudo-likelihood estimator for SDDEs; 
K\"{u}chler and Vasil'jev \cite{kuchler3} propose a sequential procedure with a given accuracy in the $L_2$ sense. 
Moreover, Reiss \cite{reiss, reiss2} investigate nonparametric inference for affine SDDEs; 
Ren and Wu \cite{ren} consider least squares estimates for path-dependent McKean-Vlasov SDEs from discrete observations. 
Although all of those are studied in the ergodic context, we are interested in the small noise case; $\epsilon \to 0$, which is useful to justify the validity of the estimators since, in most applications of SFDEs, the ergodicity is often not expected. 
In this paper, we consider a {\it local-Gauss type} contrast function, and show the asymptotic normality of the minimum contrast estimators. 

The paper is organized as follows. In Section \ref{sec:notation}, we make notation and assumptions, and state our main results in Section \ref{sec:main}. 
In Section \ref{sec:simulation}, we provide some numerical studies to support for our results. 
All the mathematical proofs are put in Section \ref{sec:proofs}. 

\section{Notation and assumptions}\label{sec:notation}

\subsection{Notation}
\renewcommand{\labelenumi}{(N\theenumi)}
\newcounter{enumimemory}
\begin{enumerate}
\item $X_t^0$ is the solution of the ordinary differential equations under the true value of the drift parameter for $t\in[0,1]$: $X_0^0=\phi(0)=x_0$ and 
\begin{align}
\left\{
\begin{array}{ll}
\df X_t^0 = b\left(X_t^0,H(X_{t-\cdot}^0),\theta_0\right)\,\df t, & t\in[0,1];\notag\\
X_t^0 = \phi(t), & t\in[-\delta,0), \label{ODEs}
\end{array}
\right.
\end{align}
where $\phi\in C([-\delta, 0];\mathbb{R}^d)$ and $x_0$ is constant. As for the existence and uniqueness of the solution, see the proof of Theorem 3.7 and Remark 3.8 by Smith \cite{smith}.
\item For matrix $A$, the $(i,j)$th element is written by $A^{ij}$, ansd that 
\begin{equation*}
|A|^2=\mathrm{tr}\left(AA^\top\right),
\end{equation*}
where $A^\top$ is the transpose of $A$ and $\mathrm{tr}\left(AA^\top\right)$ is the trace of $AA^\top$.
\item For multi-index $m=(m_1,\dots,m_k)$, a derivative operator in $z\in \mathbb{R}^k$ is given by
\begin{equation*}
\partial_z^m:=\partial_{z_1}^{m_1}\cdots \partial_{z_k}^{m_k},\qquad \partial_{z_i}^{m_i}\colon=\left(\partial/\partial_{z_i}\right)^{m_i}. 
\end{equation*}

\item Let $C^{j,k,l}(\mathbb{R}^d\times\mathbb{R}^d\times\Theta;\mathbb{R}^N)$ be the space of all functions $f$ satisfying that $f(x,y,\theta)$ is a $\mathbb{R}^N$-valued function on $\mathbb{R}^d\times\mathbb{R}^d\times\Theta$ which $j$, $k$ and $l$ times continuously differentiable with respect to $x$, $y$ and $\theta$, respectively.
\item $C_{\uparrow}^{j,k,l}(\mathbb{R}^d\times\mathbb{R}^d\times\Theta;\mathbb{R}^N)$ is a class of $C^{j,k,l}(\mathbb{R}^d\times\mathbb{R}^d\times\Theta;\mathbb{R}^N)$ satisfying that
\begin{equation*}
\sup_{\theta\in\Theta}|\partial^\mu_\theta\partial^\nu_y\partial^\xi_x f(x,y,\theta)|\leq C(1+|x|+|y|)^\lambda,
\end{equation*}
for universal positive constants $C$ and $\lambda$, where for $M=\mathrm{dim}(\Theta)$, $\mu=(\mu_1,\dots,\mu_M)$, $\nu=(\nu_1,\dots,\nu_d)$ and $\xi=(\xi_1,\dots,\xi_d)$ are multi-indices with $0\leq\sum_{i=1}^M\mu_i\leq l$, $0\leq\sum_{i=1}^d\nu_i\leq k$ and $0\leq\sum_{i=1}^d\xi_i\leq j$, respectively.
\item Denote by $G(p)$ the set of all permutations on $\{1,\dots,p\}$.
\item For elements $\left\{b^i\right\}$ and $\left\{[\sigma\sigma^\top]^{ij}\right\}$, we denote by
\begin{equation*}
b^i_{t,H}(\theta):=b^i\left(X_t^\varepsilon,H\left(X_{t-\cdot}^\varepsilon\right),\theta\right),\quad 
[\sigma\sigma^\top]_{t,H}^{ij}(\beta):=[\sigma\sigma^\top]^{ij}\left(X_t^\varepsilon,H\left(X_{t-\cdot}^\varepsilon\right),\beta\right).
\end{equation*}
\item Denote by
$\Delta_kX^\varepsilon:= X_{t_k}^\varepsilon-X_{t_{k-1}}^\varepsilon$ and 
\begin{equation*}
B\left(X_t^0, \theta_0, \theta\right):= b\left(X_t^0,H\left(X_{t-\cdot}^0\right),\theta_0\right)-b\left(X_t^0,H\left(X_{t-\cdot}^0\right),\theta\right).
\end{equation*}
\setcounter{enumimemory}{\value{enumi}}
\end{enumerate}

\subsection{Assumptions}
We make the following assumptions:
\renewcommand{\labelenumi}{(A\theenumi)}
\begin{enumerate}
\item There exists a constant $K>0$ such that \label{assu1}
\begin{align*}
|b(x,y,\theta)-b(\tilde{x},\tilde{y},\theta)|+|\sigma(x,y,\theta)-\sigma(\tilde{x},\tilde{y},\theta)|&\leq K\left(|x-\tilde{x}|+|y-\tilde{y}|\right),\\
|b(x,y,\theta)|+|\sigma(x,y,\theta)|&\leq K\left(1+|x|+|y|\right),
\end{align*}
for each $x,\tilde{x},y,\tilde{y}\in\mathbb{R}^d$ and $\theta\in\Theta$.

\item For any $p\geq 1$, \label{assu2}
\begin{equation*}
\sup_{\varepsilon\in(0,1]}E\left[\sup_{t\in[-\delta,0]}|\phi^\varepsilon(t)|^p\right]<\infty,
\end{equation*}
and there exists a constant $K_1,K_2>0$ such that 
\begin{equation*}
E\left[|\phi^\varepsilon(t)-\phi^\varepsilon(s)|^p\right]\leq K_1|t-s|^p + K_2\varepsilon^p|t-s|^{p/2}. 
\end{equation*} 
Moreover, as $\varepsilon \to 0$,
\begin{equation*}
E\left[\sup_{-\delta \leq t\leq 0}|\phi^{\varepsilon}(t)-\phi(t)|^p\right]=O(\varepsilon^p).
\end{equation*}

\item $b\left(\cdot,\cdot,\cdot\right)\in C_{\uparrow}^{2,1,3}\left(\mathbb{R}^d\times\mathbb{R}^d\times\Theta;\mathbb{R}^d\right)$, $\sigma(\cdot,\cdot,\cdot)\in C_{\uparrow}^{2,1,3}\left(\mathbb{R}^d\times\mathbb{R}^d\times\Theta;\mathbb{R}^d\otimes \mathbb{R}^r\right)$. \label{assu3}

\item The matrix $[\sigma\sigma^\top]\left(x,y,\beta\right)$ is positive definite for each $x,y\in\mathbb{R}^d$ and $\beta\in\ol{\Theta}_{\beta}$, and that 
\[
\inf_{x,y\in\mathbb{R}^d,\beta\in\ol{\Theta}_{\beta}}\det[\sigma\sigma^\top]\left(x,y,\beta\right)>0. 
\]
Moreover,
$[\sigma\sigma^\top]^{-1}(\cdot,\cdot,\cdot)\in C_{\uparrow}^{1,1,3}\left(\mathbb{R}^d\times\mathbb{R}^d\times\ol{\Theta}_{\beta};\mathbb{R}^d\otimes \mathbb{R}^d\right)$. \label{assu4}

\item\label{assu5}  If $\theta\neq\theta_0$ then $b(X_t^0,H(X_{t-\cdot}^0),\theta)\neq b(X_t^0,H(X_{t-\cdot}^0),\theta_0)$;  
If $\beta\neq\beta_0$ then $[\sigma\sigma^\top](X_t^0,H(X_{t-\cdot}^0),\beta)\neq [\sigma\sigma^\top](X_t^0,H(X_{t-\cdot}^0),\beta_0)$, 
 for at least one value of $t$, respectively. 

\item \label{assu6} The matrix
\begin{equation*}
I(\theta_0)=
\left(
\begin{array}{cc}
\left(I_b^{ij}(\theta_0)\right)_{1\leq i,j\leq p} & 0\\
0 & \left(I_\sigma^{ij}(\theta_0)\right)_{1\leq i,j\leq q}\\
\end{array}
\right),
\end{equation*}
is positive definite, where
\small{
\begin{align*}
I_b^{ij}(\theta_0)&=\int_0^1\bigg(\frac{\partial}{\partial\alpha_i}b\left(X_{s}^0,H\left(X_{s-\cdot}^0\right),\theta_0\right)\bigg)^{\top}[\sigma\sigma^\top]^{-1}\left(X_s^0,H\left(X_{s-\cdot}^0\right),\beta_0\right)\bigg(\frac{\partial}{\partial\alpha_j}b\left(X_{s}^0,H\left(X_{s-\cdot}^0\right),\theta_0\right)\bigg)\,\df s,\\
I_\sigma^{ij}(\theta_0)&=\frac{1}{2}\int_0^1\mathrm{tr}\Bigg[\bigg(\frac{\partial}{\partial\beta_i}[\sigma\sigma^{\top}]\bigg)[\sigma\sigma^\top]^{-1}\bigg(\frac{\partial}{\partial\beta_j} [\sigma\sigma^\top]\bigg)[\sigma\sigma^\top]^{-1}\left(X_s^0,H\left(X_{s-\cdot}^0\right),\beta_0\right)\Bigg]\,\df s. 
\end{align*}
}
\end{enumerate}
\begin{remark}
\upshape
Although the assumption (A\ref{assu4}) seems a bit restrictive, it is the same assumption as [A3'] in Gloter and S\o rensen \cite{gloter}.
\end{remark}

\section{Main theorems}\label{sec:main}

For estimation of $\theta\in \Theta$ in \eqref{SFDEs}, we consider the following local-Gauss type contrast function:
\begin{equation*}
U_{n,\varepsilon}(\theta)=\sum_{k=1}^{n}\left\{\log\det\Xi_{k-1}(\beta)+\frac{n}{\varepsilon^2}P_k^\top(\theta)\Xi_{k-1}^{-1}(\beta)P_k(\theta)\right\},
\end{equation*}
where
\begin{equation*}
P_k(\theta)=\Delta_k X-\frac{1}{n}b\left(X_{t_{k-1}},H_n(X_{t_{k-1}-\cdot}),\theta\right),\quad \Xi_{k-1}(\beta)=[\sigma\sigma^{\top}]\left(X_{t_{k-1}},H_n(X_{t_{k-1}-\cdot}),\beta\right),
\end{equation*}
\begin{equation*}
H_n(X_{t_{k-1}-\cdot})=\sum_{i=1}^{\lfloor n\delta \rfloor}\left\{X_{t_{k-1}-(i-1)/n}~\mu \big([(i-1)/n, i/n)\big)\right\}+X_{t_{k-1}-\delta_n}~\mu \big([\delta_n, \delta]\big),
\end{equation*}
and $\delta_n := \lfloor n\delta \rfloor/n$. 
\begin{defi}
A minimum contrast estimator $\wh{\theta}_{n,\varepsilon}=(\wh{\alpha}_{n,\varepsilon},\wh{\beta}_{n,\varepsilon})$ is defined as
\begin{equation*}
U_{n,\varepsilon}(\wh{\theta}_{n,\varepsilon})=\inf_{\theta\in\Theta}U_{n,\varepsilon}(\theta).
\end{equation*}
\end{defi}

The consistency of our estimator $\wh{\theta}_{n,\varepsilon}$ is given as follows.
\begin{theo}\label{consistency}
Suppose the assumptions {\upshape(A\ref{assu1})--(A\ref{assu5})}.  Then we have
\begin{equation*}
\wh{\theta}_{n,\varepsilon} \xrightarrow{P}\theta_0,
\end{equation*}
if $(\sqrt{n}\varepsilon)^{-1}\to0$ as $\varepsilon\to0$ and $n\to\infty$. 
\end{theo}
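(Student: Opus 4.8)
The plan is to apply the standard minimum-contrast (argmin) strategy, but with \emph{two} normalizations, because the drift and diffusion parameters are carried at different scales. After dividing by $n$ the diffusion block $\beta$ survives while the drift signal is of order $(n\varepsilon^2)^{-1}=o(1)$, so $\beta$ is identified at the ``$n$-scale'' and $\alpha$ at the ``$\varepsilon^{-2}$-scale''. Accordingly I would first prove $\wh\beta_{n,\varepsilon}\xrightarrow{P}\beta_0$ and then, with $\wh\beta_{n,\varepsilon}$ in hand, deduce $\wh\alpha_{n,\varepsilon}\xrightarrow{P}\alpha_0$. Throughout write $S(s,\beta):=[\sigma\sigma^\top]\big(X_s^0,H(X_{s-\cdot}^0),\beta\big)$.

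\emph{Preliminaries.} First I would collect the estimates underlying everything. Using (A\ref{assu1})--(A\ref{assu3}), Gronwall's inequality and the Burkholder--Davis--Gundy inequality give $\sup_{t\in[0,1]}E|X_t^\varepsilon-X_t^0|^p=O(\varepsilon^p)$, together with control of the discretised delay functional $H_n(X_{t_{k-1}-\cdot})\to H(X_{s-\cdot}^0)$ and of the time-discretisation error. These yield the decomposition
\[
P_k(\theta)=\tfrac1n\,B\big(X_{t_{k-1}}^0,\theta_0,\theta\big)+m_k+r_k,\qquad m_k=\varepsilon\int_{t_{k-1}}^{t_k}\sigma\big(X_s^\varepsilon,H(X_{s-\cdot}^\varepsilon),\beta_0\big)\,\df W_s,
\]
where $m_k$ is a martingale increment of size $O(\varepsilon/\sqrt n)$ and $r_k$ is a remainder that is negligible after the relevant scalings. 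I also need two limit lemmas: the Riemann-sum convergence $\frac1n\sum_k f(X_{t_{k-1}}^0)\to\int_0^1 f\,\df s$, and the quadratic-martingale law of large numbers $\varepsilon^{-2}\sum_k m_k^\top A_{k-1}m_k\xrightarrow{P}\int_0^1\mathrm{tr}\big(A_sS(s,\beta_0)\big)\,\df s$.

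\emph{Step 1 (consistency of $\wh\beta$).} I would show that, uniformly in $\theta\in\Theta$,
\[
\tfrac1n U_{n,\varepsilon}(\theta)\xrightarrow{P}\mathbb U_\beta(\beta):=\int_0^1\Big(\log\det S(s,\beta)+\mathrm{tr}\big(S(s,\beta)^{-1}S(s,\beta_0)\big)\Big)\,\df s .
\]
Substituting the decomposition of $P_k$, the $\log\det$ part is a Riemann sum converging to $\int_0^1\log\det S(s,\beta)\,\df s$; the martingale-square part converges by the quadratic LLN (with $A=S(\cdot,\beta)^{-1}$) to $\int_0^1\mathrm{tr}\big(S(s,\beta)^{-1}S(s,\beta_0)\big)\,\df s$; the pure-drift part is $O\big((n\varepsilon^2)^{-1}\big)$ and the drift$\times$martingale cross term is $O_P\big((n\varepsilon)^{-1}\big)$, both $o_P(1)$ precisely because $(\sqrt n\varepsilon)^{-1}\to0$ forces $n\varepsilon^2\to\infty$ and $n\varepsilon\to\infty$. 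In particular the limit is free of $\alpha$. Since for fixed $s$ the map $M\mapsto\log\det M+\mathrm{tr}\big(M^{-1}S(s,\beta_0)\big)$ is uniquely minimised over positive-definite $M$ at $M=S(s,\beta_0)$, and (A\ref{assu5}) forces $S(\cdot,\beta)\neq S(\cdot,\beta_0)$ on a set of positive measure whenever $\beta\neq\beta_0$, the function $\mathbb U_\beta$ has a well-separated unique minimiser at $\beta_0$; the usual argmin argument gives $\wh\beta_{n,\varepsilon}\xrightarrow{P}\beta_0$ (the $\alpha$-component is unconstrained at this scale).

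\emph{Step 2 (consistency of $\wh\alpha$) and main obstacle.} Here I would exploit that at a common diffusion parameter the nuisance terms cancel. Since $\wh\theta_{n,\varepsilon}$ minimises $U_{n,\varepsilon}$ we have $U_{n,\varepsilon}(\wh\alpha,\wh\beta)\le U_{n,\varepsilon}(\alpha_0,\wh\beta)$, and in the difference $U_{n,\varepsilon}(\alpha,\wh\beta)-U_{n,\varepsilon}(\alpha_0,\wh\beta)$ both the $\log\det$ and the martingale-square terms drop out, leaving
\[
\varepsilon^2\big(U_{n,\varepsilon}(\alpha,\wh\beta)-U_{n,\varepsilon}(\alpha_0,\wh\beta)\big)=\tfrac1n\sum_{k=1}^n \tilde B_k^\top\Xi_{k-1}^{-1}(\wh\beta)\tilde B_k+2\sum_{k=1}^n\tilde B_k^\top\Xi_{k-1}^{-1}(\wh\beta)P_k(\alpha_0,\wh\beta),
\]
with $\tilde B_k=b(X_{t_{k-1}},H_n,\alpha_0,\wh\beta)-b(X_{t_{k-1}},H_n,\alpha,\wh\beta)$. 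The cross term is $O_P(\varepsilon)+o_P(1)$ uniformly in $\alpha$ (its martingale part is $O_P(\varepsilon)$ and its deterministic part is $O_P(|\wh\beta-\beta_0|)$), while the first term converges, uniformly in $\alpha$ and using $\wh\beta\to\beta_0$, to $J(\alpha):=\int_0^1 B\big(X_s^0,\theta_0,(\alpha,\beta_0)\big)^\top S(s,\beta_0)^{-1} B\big(X_s^0,\theta_0,(\alpha,\beta_0)\big)\,\df s$. Evaluating at $\alpha=\wh\alpha$, where the left-hand side is $\le0$, gives $J(\wh\alpha)\le o_P(1)$; since $J\ge0$ is uniquely zero at $\alpha_0$ with $\inf_{|\alpha-\alpha_0|\ge\rho}J(\alpha)>0$ by (A\ref{assu5}) and the positive-definiteness in (A\ref{assu4}), we conclude $\wh\alpha_{n,\varepsilon}\xrightarrow{P}\alpha_0$. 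The genuinely delicate part is not this algebra but the uniform-in-$\theta$ control: one must show the quadratic-martingale LLN, the Riemann-sum limits, and the negligibility of the cross and remainder terms all hold uniformly over the compact parameter set. I expect to handle this via $L^p$-bounds on the $\theta$-derivatives of the integrands — available from the polynomial-growth classes $C_\uparrow^{2,1,3}$ and $C_\uparrow^{1,1,3}$ of (A\ref{assu3})--(A\ref{assu4}) — combined with a Sobolev/maximal-inequality argument and careful bookkeeping of both $r_k$ and the $\beta$-dependence of $b$; it is exactly in these uniform estimates that the regime $(\sqrt n\varepsilon)^{-1}\to0$ is repeatedly used.
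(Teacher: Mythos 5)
Your proposal follows essentially the same route as the paper: both reduce consistency to the two uniform convergences $\tfrac1n U_{n,\varepsilon}(\theta)\to\int_0^1\{\log\det S(s,\beta)+\mathrm{tr}(S(s,\beta)^{-1}S(s,\beta_0))\}\,\df s$ (identifying $\beta_0$) and $\varepsilon^2\{U_{n,\varepsilon}(\alpha,\beta)-U_{n,\varepsilon}(\alpha_0,\beta)\}\to\int_0^1 B_s^\top S(s,\beta_0)^{-1}B_s\,\df s$ (identifying $\alpha_0$), which the paper obtains from its Lemmas \ref{convergence depend on n} and \ref{convergence of diffusion} and your drift--martingale--remainder decomposition of $P_k$ reproduces. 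The only cosmetic difference is that the paper delegates the final ``two limits imply consistency'' step to Theorem 1 of S{\o}rensen and Uchida, while you spell out the argmin/well-separated-minimum argument explicitly.
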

\noindent The next theorem gives the asymptotic normal distribution of $\wh{\theta}_{n,\varepsilon}$.
\begin{theo}\label{asymptotic normal}
Suppose the assumptions {\upshape(A\ref{assu1})--(A\ref{assu6})}. Then we have
\begin{equation*}
\left(
\begin{array}{rr}
\varepsilon^{-1}(\wh{\alpha}_{n,\varepsilon}-\alpha_0) \\
\sqrt{n}(\wh{\beta}_{n,\varepsilon}-\beta_0) \\
\end{array}
\right)
\xrightarrow{d} N\left(0,I^{-1}(\theta_0)\right),
\end{equation*}
$(\sqrt{n}\varepsilon)^{-1}\to0$ as $\varepsilon\to0$ and $n\to\infty$. 
\end{theo}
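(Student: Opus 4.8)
The plan is to follow the classical route for minimum-contrast estimators, using the consistency already furnished by Theorem \ref{consistency}. Writing $\partial_\theta U_{n,\varepsilon}$ for the gradient, on the event $\{\wh{\theta}_{n,\varepsilon}\in\mathring{\Theta}\}$ (which has probability tending to one since $\wh\theta_{n,\varepsilon}\xrightarrow{P}\theta_0$ and $\theta_0\in\mathring\Theta$) we have $\partial_\theta U_{n,\varepsilon}(\wh\theta_{n,\varepsilon})=0$, and a first-order Taylor expansion around $\theta_0$ gives
\[
0 = \partial_\theta U_{n,\varepsilon}(\theta_0) + \wh H_{n,\varepsilon}\,(\wh\theta_{n,\varepsilon}-\theta_0),\qquad \wh H_{n,\varepsilon}:=\int_0^1 \partial_\theta^2 U_{n,\varepsilon}\big(\theta_0 + s(\wh\theta_{n,\varepsilon}-\theta_0)\big)\,\df s.
\]
Introducing the rate matrix $\varphi_{n,\varepsilon}:=\mathrm{diag}(\varepsilon I_p,\,n^{-1/2}I_q)$, so that $\varphi_{n,\varepsilon}^{-1}(\wh\theta_{n,\varepsilon}-\theta_0)$ is exactly the vector in the statement, I would rewrite this as
\[
\varphi_{n,\varepsilon}^{-1}(\wh\theta_{n,\varepsilon}-\theta_0) = -\big(\varphi_{n,\varepsilon}\,\wh H_{n,\varepsilon}\,\varphi_{n,\varepsilon}\big)^{-1}\,\varphi_{n,\varepsilon}\,\partial_\theta U_{n,\varepsilon}(\theta_0).
\]
The proof then splits into (i) a central limit theorem for the normalized score $\varphi_{n,\varepsilon}\partial_\theta U_{n,\varepsilon}(\theta_0)$ and (ii) a law of large numbers for the normalized Hessian $\varphi_{n,\varepsilon}\wh H_{n,\varepsilon}\varphi_{n,\varepsilon}$, after which Slutsky's theorem delivers the result.

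For the score I would insert the dynamics of $X^\varepsilon$. Under $\theta_0$,
\begin{align*}
P_k(\theta_0) &= \int_{t_{k-1}}^{t_k}\big\{b(X_s^\varepsilon, H(X_{s-\cdot}^\varepsilon),\theta_0) - b(X_{t_{k-1}}, H_n(X_{t_{k-1}-\cdot}),\theta_0)\big\}\,\df s \\
&\quad + \varepsilon\int_{t_{k-1}}^{t_k}\sigma(X_s^\varepsilon, H(X_{s-\cdot}^\varepsilon),\beta_0)\,\df W_s,
\end{align*}
where the martingale part is $\varepsilon\,\sigma_{t_{k-1},H}(\beta_0)\Delta_kW$ to leading order, and the drift/delay-discretization remainder is controlled by (A\ref{assu1})--(A\ref{assu4}) and the moment bounds (A\ref{assu2}). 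The $\alpha$-block then reduces, to leading order, to the linear functional $-2\sum_k(\partial_\alpha b_{t_{k-1},H}(\theta_0))^\top\Xi_{k-1}^{-1}(\beta_0)\sigma_{t_{k-1},H}(\beta_0)\Delta_kW$: the prefactor $\varepsilon^{-1}$ is exactly cancelled by the $\varepsilon$ in the martingale part, and the surviving remainder is of order $(n\varepsilon)^{-1}=n^{-1/2}(\sqrt n\varepsilon)^{-1}\to0$, which is precisely where the hypothesis $(\sqrt n\varepsilon)^{-1}\to0$ enters. Since only $\Xi$ carries the $\beta$-dependence, the $\beta$-block reduces to a sum of centered Gaussian quadratic forms $-\sum_k\{n\,\Delta_kW^\top M_{k-1}\Delta_kW - \mathrm{tr}(M_{k-1})\}$, with $M_{k-1}=\sigma_{t_{k-1},H}^\top\Xi_{k-1}^{-1}(\partial_{\beta_i}\Xi_{k-1})\Xi_{k-1}^{-1}\sigma_{t_{k-1},H}$, normalized by $n^{-1/2}$.

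Both blocks are triangular arrays of $\{\mathcal F_{t_k}\}$-martingale differences, since every coefficient is $\mathcal F_{t_{k-1}}$-measurable, so I would verify a martingale CLT (convergence of the predictable bracket plus a Lyapunov bound). Because $X^\varepsilon\to X^0$ deterministically with uniform moment control, the predictable brackets converge in probability to the deterministic Riemann integrals defining $I_b(\theta_0)$ and $I_\sigma(\theta_0)$. The key simplification is that the $\alpha$-block is linear in $\Delta_kW$ while the $\beta$-block is an even (centered quadratic) functional, so their conditional covariance involves a vanishing third Gaussian moment; this yields a block-diagonal limit and, simultaneously, the asymptotic independence of the two estimators at their different rates. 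A computation gives $\varphi_{n,\varepsilon}\partial_\theta U_{n,\varepsilon}(\theta_0)\xrightarrow{d}N(0,4I(\theta_0))$ and $\varphi_{n,\varepsilon}\wh H_{n,\varepsilon}\varphi_{n,\varepsilon}\xrightarrow{P}2I(\theta_0)$ (the mixed $\alpha\beta$ Hessian block being a martingale of order $(\sqrt n\varepsilon)^{-1}\to0$); passing from $\partial_\theta^2U(\theta_0)$ to $\wh H_{n,\varepsilon}$ at the random intermediate point uses consistency together with a uniform third-derivative bound guaranteed by the polynomial-growth class $C_\uparrow^{2,1,3}$ in (A\ref{assu3})--(A\ref{assu4}). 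Slutsky then yields $-(2I(\theta_0))^{-1}N(0,4I(\theta_0))=N(0,I^{-1}(\theta_0))$, the matching constants $2$ and $4$ arising because the contrast is twice the negative Gaussian pseudo-log-likelihood.

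I expect the main obstacle to be the simultaneous control, at two genuinely different scales, of the two distinct error sources: the Euler discretization of the drift \emph{and} the Riemann-sum approximation $H_n(X_{t_{k-1}-\cdot})$ of the delay functional $H(X_{t_{k-1}-\cdot}^\varepsilon)$. One must show each is negligible in the $\alpha$-block (scale $\varepsilon$) and the $\beta$-block (scale $n^{-1/2}$), which is exactly what forces the joint regime $(\sqrt n\varepsilon)^{-1}\to0$. The delay term is the feature absent from the classical no-delay theory of Gloter and S\o rensen \cite{gloter}: since $H_n$ aggregates past observations against a merely finite measure $\mu$ over a mesh-dependent partition of $[0,\delta]$, one must bound $|H_n(X_{t_{k-1}-\cdot})-H(X_{t_{k-1}-\cdot}^\varepsilon)|$ through the path regularity in (A\ref{assu2}), propagate it through the nonlinear coefficients via the Lipschitz and growth bounds (A\ref{assu1}), and do so uniformly in $k$. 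Establishing these delay-specific remainder estimates at the sharp order, and verifying the martingale-CLT Lyapunov condition in their presence, is where the real work lies.
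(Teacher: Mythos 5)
Your proposal is correct and follows essentially the same route as the paper: the paper invokes Theorem 1 of S\o rensen and Uchida, which packages exactly your Taylor-expansion-plus-Slutsky argument into the three conditions $C_{n,\varepsilon}(\theta_0)\xrightarrow{P}2I(\theta_0)$, uniform control of $C_{n,\varepsilon}$ near $\theta_0$, and $\Lambda_{n,\varepsilon}(\theta_0)\xrightarrow{d}N(0,4I(\theta_0))$, the last being verified via the Hall--Heyde martingale CLT (predictable bracket plus conditional fourth-moment Lyapunov condition) just as you describe. Your identification of the block-diagonal limit through the vanishing odd Gaussian moment, the matching constants $2$ and $4$, and the delay-discretization remainders controlled at the two rates via $(\sqrt{n}\varepsilon)^{-1}\to 0$ all coincide with the paper's treatment through its conditional-moment expansions (Lemma 5) and uniform convergence lemmas (Lemmas 6 and 7).
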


\section{Simulations}\label{sec:simulation}
We consider the following $2$ -dimensional SFDE:
\begin{equation*}
\left\{
\begin{aligned}
\df X_t^{(1)}=\alpha_1H(X_{t-\cdot}^{(2)})\,\df t+\varepsilon\beta_1\sqrt{1+\left(H(X_{t-\cdot}^{(2)})\right)^2}\,\df W_t^1,\\
\df X_t^{(2)}=\alpha_2H(X_{t-\cdot}^{(1)})\,\df t+\varepsilon\beta_2\sqrt{1+\left(H(X_{t-\cdot}^{(1)})\right)^2}\,\df W_t^2,
\end{aligned}
\right.
\end{equation*}
for $t\in[0,1]$,
\begin{equation*}
\left\{
\begin{aligned}
\df X_t^{(1)}=5X_t^{(2)}\,\df t+7\varepsilon\sqrt{1+\left(X_t^{(2)}\right)^2}\,\df W_t^1,\\
\df X_t^{(2)}=6X_t^{(1)}\,\df t+8\varepsilon\sqrt{1+\left(X_t^{(1)}\right)^2}\,\df W_t^2,
\end{aligned}
\right.
\end{equation*}
for $t\in[-\delta,0]$, where $\delta=1/10$, $\left(X_{-\delta}^{(1)},X_{-\delta}^{(2)}\right)=(1,2)$ and $H(X_{t-\cdot})=X_{t-\delta}$. 
In this example, the estimator is given explicitly as follows:
\begin{equation*}
\wh{\theta}_{n,\varepsilon}=(\wh{\alpha}_{n,\varepsilon},\wh{\beta}_{n,\varepsilon})=(\wh{\alpha}_{n,\varepsilon,1},\wh{\alpha}_{n,\varepsilon,2},\wh{\beta}_{n,\varepsilon,1},\wh{\beta}_{n,\varepsilon,2}),
\end{equation*}
where 
\begin{equation*}
\wh{\alpha}_{n,\varepsilon,1} = \frac{n\displaystyle\sum_{k=1}^{n}\frac{H_n(X_{t_{k-1}-\cdot}^{(2)})}{1+\left(H_n(X_{t_{k-1}-\cdot}^{(2)})\right)^2}}{\displaystyle\sum_{k=1}^{n}\frac{\left(H_n(X_{t_{k-1}-\cdot}^{(2)})\right)^2}{1+\left(H_n(X_{t_{k-1}-\cdot}^{(2)})\right)^2}}, \quad 
\wh{\alpha}_{n,\varepsilon,2} = \frac{n\displaystyle\sum_{k=1}^{n}\frac{H_n(X_{t_{k-1}-\cdot}^{(1)})}{1+\left(H_n(X_{t_{k-1}-\cdot}^{(1)})\right)^2}}{\displaystyle\sum_{k=1}^{n}\frac{\left(H_n(X_{t_{k-1}-\cdot}^{(1)})\right)^2}{1+\left(H_n(X_{t_{k-1}-\cdot}^{(1)})\right)^2}},
\end{equation*}
\begin{equation*}
\wh{\beta}_{n,\varepsilon,1} = \varepsilon^{-1}\displaystyle\sqrt{\sum_{k=1}^{n}\frac{\left(\Delta_{t_{k-1}}X^{(1)}-\frac{\wh{\alpha}_{n,\varepsilon,1}}{n}H_n(X_{t_{k-1}-\cdot}^{(2)})\right)^2}{1+\left(H_n(X_{t_{k-1}-\cdot}^{(2)})\right)^2}}, \quad 
\wh{\beta}_{n,\varepsilon,2} = \varepsilon^{-1}\displaystyle\sqrt{\sum_{k=1}^{n}\frac{\left(\Delta_{t_{k-1}}X^{(2)}-\frac{\wh{\alpha}_{n,\varepsilon,1}}{n}H_n(X_{t_{k-1}-\cdot}^{(1)})\right)^2}{1+\left(H_n(X_{t_{k-1}-\cdot}^{(1)})\right)^2}},
\end{equation*}
and
$H_n(X_{t-\cdot})=X_{t-\delta_n}$.

In the experiments, we generate discrete samples $\{X_{t_{k}}\}_{k=1}^n$ and $\{X_{-i/n}\}_{i=1}^{n\delta_n}$ by the Euler-Maruyama method (see Buckwar \cite{backwar}). We show means and standard deviations of estimators through 1000 times replications according to several values of $(n,\varepsilon)$ in Tables \ref{table1}--\ref{table3}, which illustrate the consistency of our estimator. 

We also show the results of normal Q-Q plot in the ideal case where $(n,\varepsilon) = (10000, 0.01)$ in Figures \ref{fig1}-\ref{fig4}, which illustrate the asymptotic normality of each marginal of $\wh{\theta}_{n,\varepsilon}$. Moreover, Figure \ref{fig5} shows that the distribution of the bilinear form of the estimator follows the $\chi^2(4)$-distribution, which illustrates the (joint) asymptotic normality of $\wh{\theta}_{n,\varepsilon}$.

\begin{table}[h]
\caption{Mean and standard deviation of the estimator $\wh{\theta}_{n,\varepsilon}$ through 1000 experiments as $\varepsilon=0.1$.}
\label{table1}
\centering
\begin{tabular}{ccccc}
\hline
$\varepsilon=0.1$ & $n=100$ & $n=1000$ & $n=10000$ & True\\
\hline
$\wh{\alpha}_{n,\varepsilon,1}$ (s.d.)&1.01707(0.30698)&1.00405(0.31741)&1.00042(0.30145)&1.0\\

$\wh{\alpha}_{n,\varepsilon,2}$ (s.d.)&2.00650(0.42907)&1.99539(0.42342)&2.01553(0.43601)&2.0\\

$\wh{\beta}_{n,\varepsilon,1}$ (s.d.)&2.98545(0.20686)&2.99715(0.06410)&2.99885(0.02126)&3.0\\

$\wh{\beta}_{n,\varepsilon,2}$ (s.d.)&3.96573(0.28401)&3.99143(0.08755)&4.00007(0.02858)&4.0\\
\hline
\end{tabular}
\end{table}

\begin{table}[h]
\caption{Mean and standard deviation of the estimator $\wh{\theta}_{n,\varepsilon}$ through 1000 experiments as $\varepsilon=0.03$.}
\label{table2}
\centering
\begin{tabular}{ccccc}
\hline
$\varepsilon=0.03$ & $n=100$ & $n=1000$ & $n=10000$ & True\\
\hline
$\wh{\alpha}_{n,\varepsilon,1}$ (s.d.)&0.99851(0.09238)&1.00105(0.09122)&0.99778(0.09053)&1.0\\

$\wh{\alpha}_{n,\varepsilon,2}$ (s.d.)&2.00184(0.12508)&2.00489(0.12605)&2.00215(0.12024)&2.0\\

$\wh{\beta}_{n,\varepsilon,1}$ (s.d.)&2.99172(0.21613)&2.99504(0.06489)&2.99993(0.02170)&3.0\\

$\wh{\beta}_{n,\varepsilon,2}$ (s.d.)&3.98198(0.27994)&3.99969(0.08832)&3.9997(0.02734)&4.0\\
\hline
\end{tabular}
\end{table}

\begin{table}[h]
\caption{Mean and standard deviation of the estimator $\wh{\theta}_{n,\varepsilon}$ through 1000 experiments as $\varepsilon=0.01$.}
\label{table3}
\centering
\begin{tabular}{ccccc}
\hline
$\varepsilon=0.01$ & $n=100$ & $n=1000$ & $n=10000$ & True\\
\hline
$\wh{\alpha}_{n,\varepsilon,1}$ (s.d.)&1.00230(0.03072)&1.00002(0.03148)&0.99984(0.03081)&1.0\\

$\wh{\alpha}_{n,\varepsilon,2}$ (s.d.)&2.00192(0.04110)&1.99903(0.04135)&1.99951(0.04072)&2.0\\

$\wh{\beta}_{n,\varepsilon,1}$ (s.d.)&2.99120(0.21070)&2.99924(0.06910)&2.99994(0.02195)&3.0\\

$\wh{\beta}_{n,\varepsilon,2}$ (s.d.)&3.98945(0.27986)&3.99760(0.09188)&4.00108(0.02737)&4.0\\
\hline
\end{tabular}
\end{table}

\begin{figure}[htbp]
\center \includegraphics[width = 9cm]{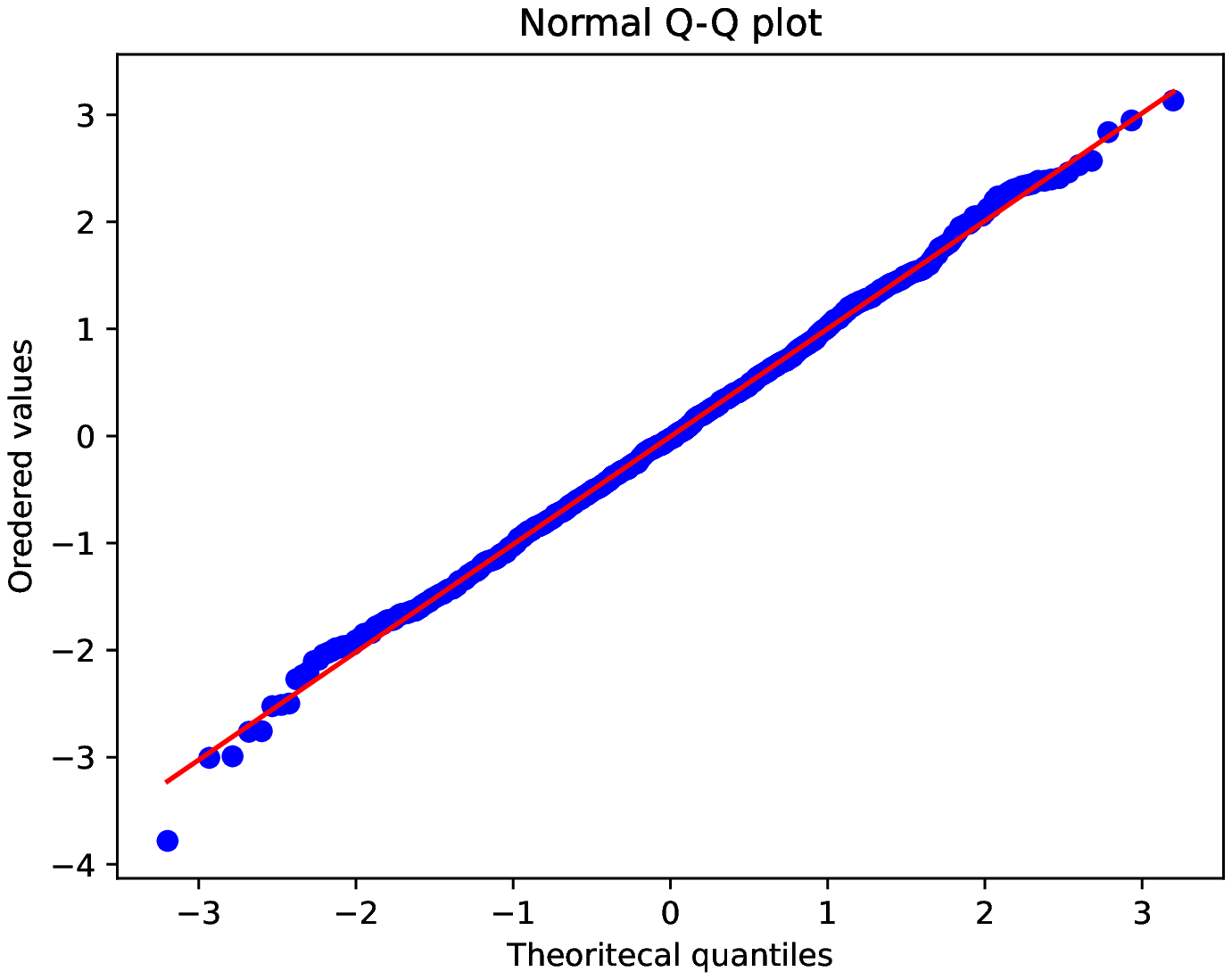}
\caption{Normal Q-Q plot as $\theta=(1.0, 2.0, 3.0, 4.0), \varepsilon=0.01, n=10000$ for $1000$ iterated samples of $\varepsilon^{-1}\sqrt{I_{b}^{11}(\theta_0)}(\wh{\alpha}_{n,\varepsilon,1}-\alpha_1)$.}
\label{fig1}
\end{figure}

\begin{figure}[htbp]
\centering
\includegraphics[width = 9cm]{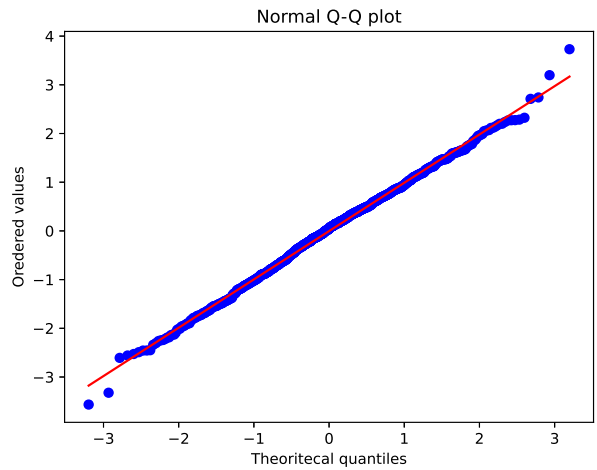}
\caption{Normal Q-Q plot as $\theta=(1.0, 2.0, 3.0, 4.0), \varepsilon=0.01, n=10000$ for $1000$ iterated samples of $\varepsilon^{-1}\sqrt{I_{b}^{22}(\theta_0)}(\wh{\alpha}_{n,\varepsilon,2}-\alpha_2)$.}
\label{fig2}
\end{figure}

\begin{figure}[htbp]
\centering
\includegraphics[width = 9cm]{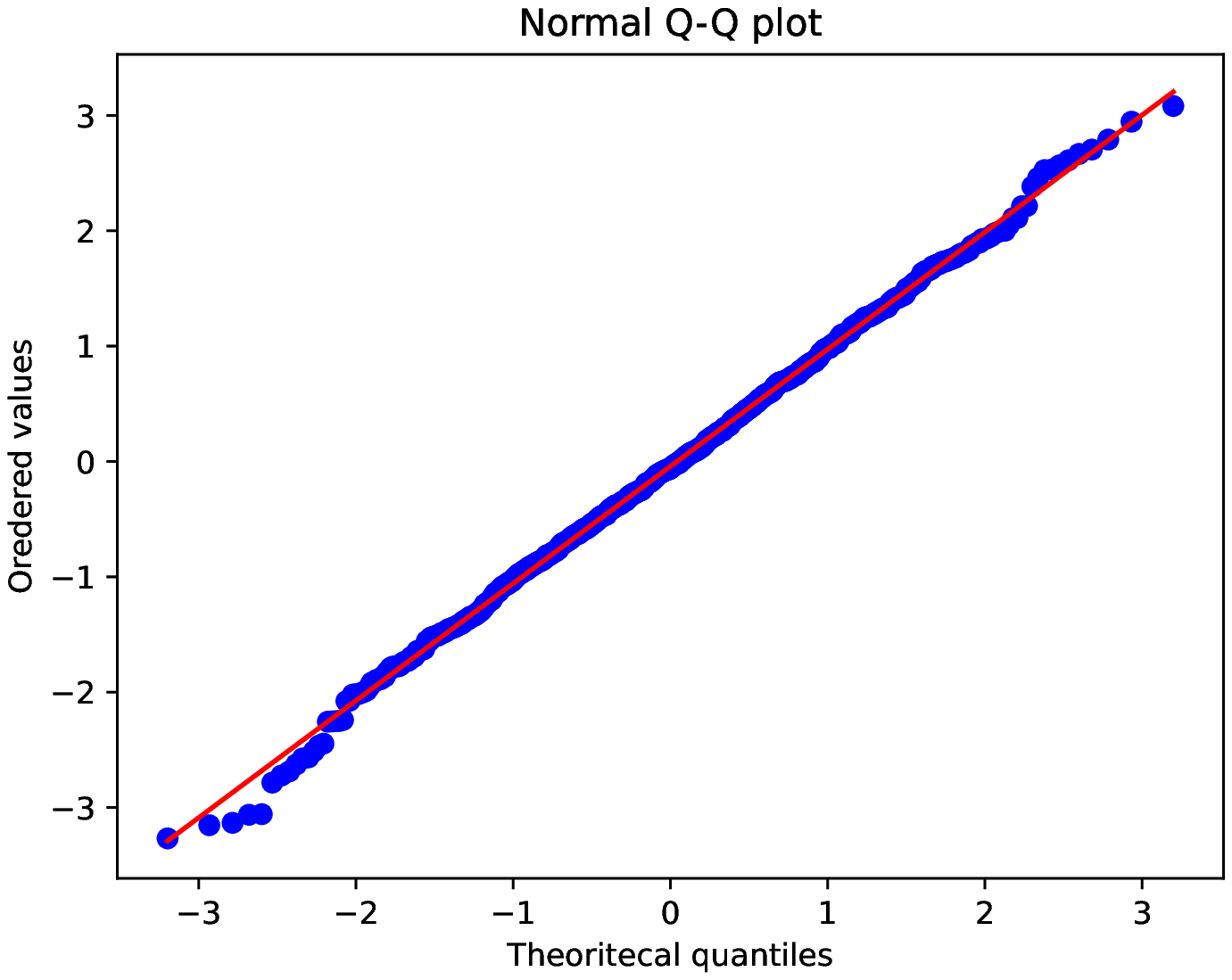}
\caption{Normal Q-Q plot  as $\theta=(1.0, 2.0, 3.0, 4.0), \varepsilon=0.01, n=10000$ for $1000$ iterated samples of $\sqrt{n}\sqrt{I_{\sigma}^{11}(\theta_0)}(\wh{\beta}_{n,\varepsilon,1}-\beta_1)$.}
\label{fig3}
\end{figure}

\begin{figure}[htbp]
\centering
\includegraphics[width = 9cm]{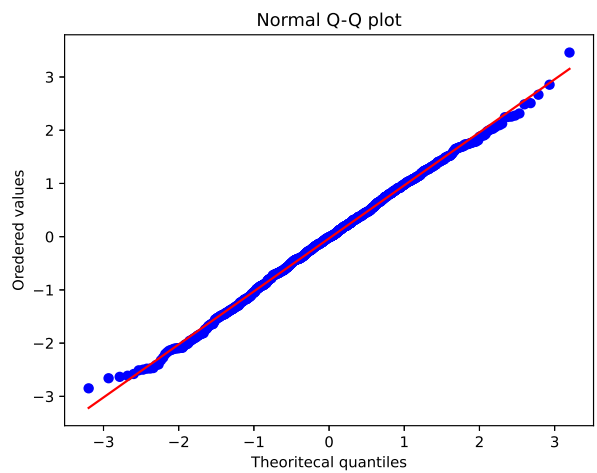}
\caption{Normal Q-Q plot as $\theta=(1.0, 2.0, 3.0, 4.0), \varepsilon=0.01, n=10000$ for $1000$ iterated samples of $\sqrt{n}\sqrt{I_{\sigma}^{22}(\theta_0)}(\wh{\beta}_{n,\varepsilon,2}-\beta_2)$.}
\label{fig4}
\end{figure}

\begin{figure}[htbp]
\centering
\includegraphics[width = 9cm]{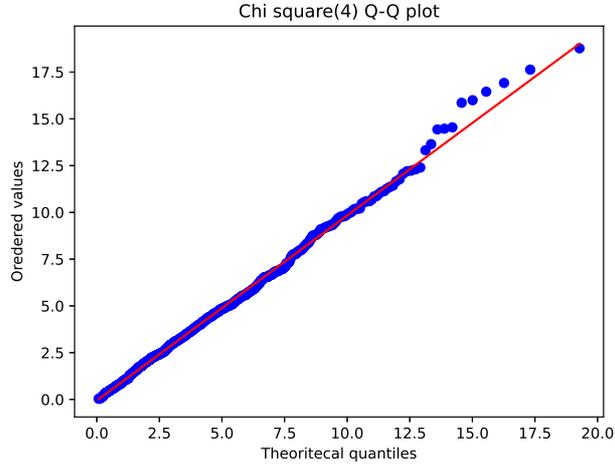}
\caption{Chi square(4) Q-Q plot as $\theta=(1.0, 2.0, 3.0, 4.0), \varepsilon=0.01, n=10000$ for $1000$ iterated samples of $\big(\varepsilon^{-1}(\wh{\alpha}_{n,\varepsilon}-\alpha), \sqrt{n}(\wh{\beta}_{n,\varepsilon,}-\beta)\big)^{\top}I(\theta_0)\big(\varepsilon^{-1}(\wh{\alpha}_{n,\varepsilon}-\alpha), \sqrt{n}(\wh{\beta}_{n,\varepsilon,}-\beta)\big)$.}
\label{fig5}
\end{figure}

\section{Proofs}\label{sec:proofs}
We first establish some preliminary lemmas. The idea of the proof of Lemma \ref{having solution} is due to that of Lemma 2.2.1 by Nualart \cite{nualart}.
\begin{lemm}\label{having solution}
Suppose that {\upshape(A\ref{assu1})} and {\upshape(A\ref{assu2})} hold true. Then there exists a strong solution $\left\{X_t^\varepsilon\right\}$ for $\varepsilon\in(0,1]$. Moreover, for $p\geq2$, it holds true:
\begin{equation*}
E\left[\sup_{-\delta\leq t\leq 1}\left|X_t^\varepsilon\right|^p\right]<\infty.
\end{equation*}

\begin{proof}[\bf{Proof}]
Let 
\begin{align*}
X_t^0=
\begin{cases}
x_0,\quad t\in[0,1];\\
\phi^\varepsilon(t),\quad t\in[-\delta,0);
\end{cases}
\end{align*}
and for $n\geq0$,
\begin{equation*}
X_t^{n+1}=
\begin{cases}
x_0^\varepsilon+\int_{0}^{t}b\left(X_s^n,H(X_{s-\cdot}^n),\theta_0\right)\,\df s+\varepsilon\int_0^t\sigma\left(X_s^n,H(X_{s-\cdot}^n),\beta_0\right)\,\df W_s,\quad t\in[0,1];\\
\phi^\varepsilon(t),\quad t\in[-\delta,0].
\end{cases}
\end{equation*}
First, we show that there exists a strong solution $\left\{X_t^\varepsilon\right\}$. From Lemma 2.2.1 of Nualart \cite{nualart}, it suffices to show that
\begin{equation}
E\left[\sup_{-\delta\leq t\leq 1}\left|X_t^{n}\right|^p\right]<\infty, \label{recursive}
\end{equation}
and
\begin{equation}
\sum_{n=0}^{\infty}E\left[\sup_{-\delta\leq t\leq 1}\left|X_{t}^{n+1}-X_{t}^{n}\right|^p\right]<\infty, \label{borel-cantelli}
\end{equation}
for any $p\geq 2$. By a recursive argument, we can show that the inequality \eqref{recursive} holds. By using the Burkholder-Davis-Gundy inequality and (A\ref{assu1}),
\begin{align*}
E\left[\sup_{-\delta\leq t\leq 1}\left|X_t^{n+1}\right|^p\right] &\leq E\left[\sup_{-\delta\leq t \leq0}\left|X_t^{n+1}\right|^p\right]+E\left[\sup_{0\leq t\leq 1}\left|X_t^{n+1}\right|^p\right]\\
&\leq C_p\Bigg\{E\left[\sup_{-\delta\leq t\leq 0}\left|\phi^\varepsilon(t)\right|^p\right] +E\left[\left|x_0^\varepsilon\right|^p\right]+E\bigg[\int_0^1\Big|b\left(X_{s}^n,H(X_{s-\cdot}^n),\theta_0\right)\Big|^p\,\df s\bigg]\\
&\quad+\varepsilon^pE\Bigg[\bigg|\int_0^1\sigma\left(X_{s}^n,H(X_{s-\cdot}^n),\theta_0\right)\,\df W_s\bigg|^p\Bigg]\Bigg\}\\
&\leq C_p\Bigg\{E\left[\sup_{-\delta\leq t\leq 0}\left|\phi^\varepsilon(t)\right|^p\right] +E\left[\big|x_0^\varepsilon\big|^p\right]\\
&\quad+K^pC'_p\int_0^1\left(1+E\big[|X_s^n|^p\big]+\left(\mu\left(\left[0,\delta\right]\right)\right)^pE\left[\sup_{0\leq u\leq \delta}\left|X_{s-u}^n\right|^p\right]\right)\,\df s\Bigg\}\\
&\leq C_p\Bigg\{E\left[\sup_{-\delta\leq t\leq 0}\left|\phi^\varepsilon(t)\right|^p\right] +E\left[\big|x_0^\varepsilon\big|^p\right]\\
&\quad+K^pC'_p\left(1+\sup_{0\leq s\leq 1}E\left[|X_s^n|^p\right]+\left(\mu\left(\left[0,\delta\right]\right)\right)^p\sup_{0\leq s\leq 1}E\left[\sup_{0\leq u\leq \delta}\left|X_{s-u}^n\right|^p\right]\right)\Bigg\}\\
&\leq C_p\Bigg\{E\left[\sup_{-\delta\leq t\leq 0}\left|\phi^\varepsilon(t)\right|^p\right] +E\left[\big|x_0^\varepsilon\big|^p\right]\\
&\quad+K^pC'_p\left(1+\sup_{0\leq s\leq 1}E\left[\left|X_s^n\right|^p\right]+\left(\mu\left(\left[0,\delta\right]\right)\right)^pE\left[\sup_{-\delta\leq s\leq 1}\left|X_{s}^n\right|^p\right]\right)\Bigg\},
\end{align*}
where $C_p$ and $C'_p$ are constants depending only on $p$. For \eqref{borel-cantelli}, by applying the Burkholder-Davis-Gundy inequality and (A\ref{assu1}) again, we have
\begin{align*}
E\left[\sup_{-\delta \leq t\leq 1}\left|X_{t}^{n+1}-X_{t}^{n}\right|^p\right]&\leq C_p\left\{1+\left(\mu\left(\left[0,\delta\right]\right)\right)^p\right\}\int_0^1E\left[\sup_{0\leq u\leq\delta}\left|X_{s-u}^n-X_{s-u}^{n-1}\right|^p\right]\,\df s\\
&\leq \frac{1}{n!}\left[C_p\left\{1+\left(\mu\left(\left[0,\delta\right]\right)\right)^p\right\}\right]^{n+1}E\left[\sup_{-\delta\leq t\leq 1}|X_t^1|^p\right].
\end{align*}
Consequently, we have the inequality \eqref{borel-cantelli} by \eqref{recursive}. 

Finally, we shall prove that the solution of \eqref{SFDEs} is unique. We assume that $\tilde{X}_t^\varepsilon$ is the solution of \eqref{SFDEs}. 
Then, it follows by (A\ref{assu1}) that
\begin{multline*}
E\left[\sup_{-\delta\leq u \leq 0}\left|X_{t-u}^\varepsilon-\tilde{X}_{t-u}^\varepsilon\right|^2\right]\leq 8K^2\Bigg\{\int_0^tE\left[\sup_{-\delta\leq u \leq 0}\left|X_{s-u}^\varepsilon-\tilde{X}_{s-u}^\varepsilon\right|^2\right]\,\df s\\
+\mu\left(\left[0,\delta\right]\right)\int_0^tE\left[\sup_{-\delta\leq u \leq 0}\left|X_{s-u}^\varepsilon-\tilde{X}_{s-u}^\varepsilon\right|^2\right]\,\df s\Bigg\}.
\end{multline*}
Hence, it follows from Gronwall's inequality that
\begin{equation*}
E\left[\sup_{-\delta\leq u \leq 0}\left|X_{t-u}^\varepsilon-\tilde{X}_{t-u}^\varepsilon\right|^2\right]=0.
\end{equation*}
The proof is completed.
\end{proof}
\end{lemm}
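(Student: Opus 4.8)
The plan is to construct $\{X^\varepsilon_t\}$ by Picard iteration, following the scheme of Lemma 2.2.1 in Nualart \cite{nualart}, and then to obtain the moment bound and uniqueness by Burkholder--Davis--Gundy (BDG) and Gronwall estimates adapted to the delay. I would set $X^0_t=\phi^\varepsilon(t)$ on $[-\delta,0]$ and $X^0_t=x_0^\varepsilon$ on $[0,1]$, and define for $n\ge0$ the iterates $X^{n+1}_t=\phi^\varepsilon(t)$ on $[-\delta,0]$ and
\[
X^{n+1}_t=x_0^\varepsilon+\int_0^t b\big(X^n_s,H(X^n_{s-\cdot}),\theta_0\big)\,\df s+\varepsilon\int_0^t\sigma\big(X^n_s,H(X^n_{s-\cdot}),\beta_0\big)\,\df W_s
\]
on $[0,1]$. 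The criterion I would invoke has two parts: first, that each iterate has finite $p$-th moment, $E[\sup_{-\delta\le t\le1}|X^n_t|^p]<\infty$; second, that the successive differences are summable, $\sum_n E[\sup_{-\delta\le t\le1}|X^{n+1}_t-X^n_t|^p]<\infty$. Granting both, Borel--Cantelli yields a limit $X^\varepsilon$ to which the iterates converge uniformly on $[-\delta,1]$ almost surely, and passing to the limit in the integral equation identifies $X^\varepsilon$ as a strong solution of \eqref{SFDEs}.

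The one device on which everything rests is the control of the delay functional. Since $H(X^n_{s-\cdot})=\int_0^\delta X^n_{s-u}\,\mu(\df u)$, the finiteness of $\mu$ gives $|H(X^n_{s-\cdot})|\le\mu([0,\delta])\,\sup_{0\le u\le\delta}|X^n_{s-u}|\le\mu([0,\delta])\,\sup_{-\delta\le r\le1}|X^n_r|$, so the delay term folds back into the same supremum norm at the cost of the finite factor $\mu([0,\delta])$. Feeding this into the BDG inequality for the stochastic integral together with the linear-growth half of (A\ref{assu1}), and controlling the contribution on $[-\delta,0]$ by the initial-data moment assumption (A\ref{assu2}), gives a recursion that propagates finiteness from $X^n$ to $X^{n+1}$, establishing the first part by induction. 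For the second part, the Lipschitz half of (A\ref{assu1}) together with the same delay bound produces a convolution inequality of the form $g_{n+1}(t)\le C\int_0^t g_n(s)\,\df s$ with $g_n(t):=E[\sup_{-\delta\le r\le t}|X^n_r-X^{n-1}_r|^p]$; iterating it yields the factorial decay $g_{n+1}(1)\le C^{n+1}/n!\cdot E[\sup_{-\delta\le t\le1}|X^1_t|^p]$, which is summable. The asserted moment bound for the limit then follows by applying the same BDG-and-growth estimate directly to $X^\varepsilon$: writing $f(t)=E[\sup_{-\delta\le r\le t}|X^\varepsilon_r|^p]$ one obtains $f(t)\le C_1+C_2\int_0^t f(s)\,\df s$, and Gronwall's lemma gives $f(1)<\infty$ (the a priori finiteness needed to run Gronwall being secured by a localization at the stopping times $\tau_N=\inf\{t:|X^\varepsilon_t|\ge N\}$ and monotone passage $N\to\infty$).

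Uniqueness I would obtain by the same mechanism: if $X^\varepsilon$ and $\tilde X^\varepsilon$ both solve \eqref{SFDEs}, the Lipschitz estimate and the delay bound give a Gronwall inequality for $t\mapsto E[\sup_{-\delta\le u\le0}|X^\varepsilon_{t-u}-\tilde X^\varepsilon_{t-u}|^2]$ with vanishing initial value, forcing the difference to be zero. The main obstacle, and the only genuine departure from the classical diffusion argument, is the bookkeeping of the delay: one must consistently dominate $H$ by a running supremum so that past values fold back into the very quantity being estimated, and check that the finiteness of $\mu([0,\delta])$ keeps every constant finite and, for the difference estimate, does not destroy the factorial gain. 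Once this domination is handled uniformly through the iteration, the remaining steps are the standard BDG-plus-Gronwall routine.
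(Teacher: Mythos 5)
Your proposal is correct and follows essentially the same route as the paper: Picard iteration in the style of Nualart's Lemma 2.2.1, with the delay functional dominated by $\mu([0,\delta])$ times a running supremum so that BDG plus the linear-growth/Lipschitz halves of (A1) yield, respectively, propagation of finite $p$-th moments and factorial decay of the successive differences, and uniqueness via the same Gronwall mechanism. The only addition is your explicit localization argument for the moment bound on the limit, which the paper leaves implicit.
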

\noindent
For Lemma \ref{X_t go to ODE}, we shall use the notations:
\begin{enumerate}
\renewcommand{\labelenumi}{(N\theenumi)}
\setcounter{enumi}{\value{enumimemory}}
\item Denote by $Y_t^{n,\varepsilon}:=X_{\lfloor nt \rfloor/t}^\varepsilon$ and $Y_{t-\cdot}^{n,\varepsilon}:=X_{\lfloor nt \rfloor/t-\cdot}^\varepsilon$ 
for the stochastic process $X^\varepsilon$ defined by \eqref{SFDEs}.
\item For $X_{t-\cdot}^\varepsilon\in C\left([0,\delta];\mathbb{R}^d\right)$, denote by $\left\|X_{t-\cdot}^\varepsilon\right\|_\infty:=\sup_{0\leq u \leq \delta}\left|X_{t-u}^\varepsilon\right|.$

\setcounter{enumimemory}{\value{enumi}}
\end{enumerate}
In Lemma \ref{X_t go to ODE}, the proof ideas follow Long et al. \cite{long}
\begin{lemm} \label{X_t go to ODE}
Suppose that {\upshape(A\ref{assu1})} and {\upshape(A\ref{assu2})} hold true. Then, it follows for $p\geq1$, 
\begin{align*}
E\left[\sup_{0\leq t\leq 1}\left|Y_{t}^{n,\varepsilon}-X_{t}^{0}\right|^p\right]&=O\left(\varepsilon^p\right)+O\left(1/n^p\right); \\
E\left[\sup_{0\leq t\leq 1}\left\|Y_{t-\cdot}^{n,\varepsilon}-X_{t-\cdot}^{0}\right\|_{\infty}^p\right]&=O\left(\varepsilon^p\right)+O\left(1/n^p\right)
\end{align*} 
as $\varepsilon\to 0$ and $n\rightarrow\infty$.
\end{lemm}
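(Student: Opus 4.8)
The plan is to decompose the total error $Y_t^{n,\varepsilon}-X_t^0$ into a \emph{small-noise} part, which compares the diffusion $X^\varepsilon$ with the ODE limit $X^0$, and a \emph{time-discretization} part, which compares $X^0$ at the grid point $\lfloor nt\rfloor/n$ with $X^0$ at $t$. The first is controlled uniformly in $\varepsilon$ by a Gronwall argument and contributes the $O(\varepsilon^p)$ term; the second is \emph{deterministic} and contributes the $O(1/n^p)$ term. The only genuinely new feature relative to the non-delayed case is the functional $H$, which enters linearly and satisfies $\big|H(f)-H(g)\big|\le \mu([0,\delta])\,\|f-g\|_\infty$, so that it never worsens the estimates.

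First I would establish the uniform small-noise bound
$$E\Big[\sup_{-\delta\le t\le 1}\big|X_t^\varepsilon-X_t^0\big|^p\Big]=O(\varepsilon^p),\qquad \varepsilon\to0.$$
On $[-\delta,0]$ the left side equals $E[\sup|\phi^\varepsilon-\phi|^p]=O(\varepsilon^p)$ by (A\ref{assu2}). On $[0,1]$, subtracting the integral equations for $X^\varepsilon$ and $X^0$ and using the Lipschitz estimate (A\ref{assu1}) together with the bound on $H$ above, the drift difference at time $s$ is dominated by $\sup_{r\le s}\big|X_r^\varepsilon-X_r^0\big|$ over the delay window; the stochastic part $\varepsilon\int_0^\cdot\sigma\,\df W$ is handled by the Burkholder--Davis--Gundy inequality and the moment bound of Lemma \ref{having solution} (which makes $E|\sigma(\cdots)|^p$ uniformly bounded), giving a term of order $\varepsilon^p$. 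Writing $Z_t:=E[\sup_{-\delta\le r\le t}|X_r^\varepsilon-X_r^0|^p]$, this produces $Z_t\le C\varepsilon^p+C\int_0^t Z_s\,\df s$, and Gronwall's inequality closes the estimate.

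For the discretization I would write, with $\lfloor nt\rfloor/n\le t$,
$$Y_t^{n,\varepsilon}-X_t^0=\big(X_{\lfloor nt\rfloor/n}^\varepsilon-X_{\lfloor nt\rfloor/n}^0\big)+\big(X_{\lfloor nt\rfloor/n}^0-X_t^0\big).$$
The first bracket is bounded uniformly in $t$ by $\sup_s|X_s^\varepsilon-X_s^0|$, whose $p$-th moment is $O(\varepsilon^p)$ by the previous step. The second bracket is deterministic: since $X^0$ solves an ODE with coefficients of linear growth and is continuous on the compact interval, $\dot X^0$ is bounded on $[0,1]$, so $X^0$ is Lipschitz there and $|X_{\lfloor nt\rfloor/n}^0-X_t^0|\le L/n=O(1/n)$; this gives the first assertion. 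For the second assertion I would apply the same splitting with an extra supremum over the delay index $u\in[0,\delta]$,
$$\big|Y_{t-u}^{n,\varepsilon}-X_{t-u}^0\big|\le \sup_{-\delta\le s\le 1}\big|X_s^\varepsilon-X_s^0\big|+\big|X_{\lfloor nt\rfloor/n-u}^0-X_{t-u}^0\big|,$$
reducing the random error again to the estimate of Step~1 and the deterministic error to the modulus of continuity of $X^0$ at scale $1/n$. Since $Y^{n,\varepsilon}$ is piecewise constant in $t$, the outer supremum over $t\in[0,1]$ may be replaced by a maximum over the $n$ grid cells at no extra cost.

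The main obstacle is precisely this deterministic term for the \emph{delayed} path: when $\lfloor nt\rfloor/n-u$ lies in the initial segment $[-\delta,0)$, there $X^0=\phi$ is only assumed continuous, so the crude bound yields the modulus $\omega_\phi(1/n)=o(1)$ rather than $O(1/n)$. To recover the stated rate I would instead compare along $X^\varepsilon$ on that segment, bounding $\big|X_{\lfloor nt\rfloor/n-u}^\varepsilon-X_{t-u}^\varepsilon\big|$ by the Hölder-in-$L^p$ estimate of (A\ref{assu2}), $E|\phi^\varepsilon(t)-\phi^\varepsilon(s)|^p\le K_1|t-s|^p+K_2\varepsilon^p|t-s|^{p/2}$, whose two terms are of orders $O(1/n^p)$ and $O(\varepsilon^p/n^{p/2})=O(\varepsilon^p)$ respectively, while on $[0,1]$ the analogous increment of $X^\varepsilon$ obeys the same orders via Burkholder--Davis--Gundy and Lemma \ref{having solution}. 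The delicate step is then to upgrade these pointwise-in-$(u,t)$ increment bounds into a bound on the supremum over $u$ and over the grid cells; I expect to do this by exploiting the piecewise-constant structure in $t$ together with an interval-wise maximal inequality, keeping the $\varepsilon$- and $n$-dependent contributions separated so that they collapse cleanly into $O(\varepsilon^p)+O(1/n^p)$.
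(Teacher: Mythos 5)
Your decomposition is the same as the paper's: a Gronwall estimate giving $E[\sup_{0\le t\le 1}\|X^\varepsilon_{t-\cdot}-X^0_{t-\cdot}\|_\infty^p]=O(\varepsilon^p)$ (drift handled by the Lipschitz bound in (A\ref{assu1}) together with $|H(f)-H(g)|\le\mu([0,\delta])\|f-g\|_\infty$, the stochastic integral by Burkholder--Davis--Gundy and the moments of Lemma \ref{having solution}, the initial segment by (A\ref{assu2})), followed by the deterministic discretization error $X^0_{\lfloor nt\rfloor/n-u}-X^0_{t-u}$, which on $[0,1]$ is $O(1/n)$ because $\dot X^0=b(\cdot)$ is bounded there. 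Up to that point your argument is correct and matches the paper's proof step for step; the paper then closes with the single phrase ``from the continuity of $X^0_t$''.

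The one place where you go beyond the paper is also the one place your argument is not closed: the discretization error on the initial segment $[-\delta,0)$, where $X^0=\phi$ is only assumed continuous, so the deterministic bound is $\omega_\phi(1/n)=o(1)$ rather than $O(1/n^p)$. Your proposed detour through $X^\varepsilon$ does not escape this. First, writing $X^\varepsilon_{\lfloor nt\rfloor/n-u}-X^\varepsilon_{t-u}$ as two copies of $X^\varepsilon-X^0$ plus the increment of $X^0$ lands you back on $\omega_\phi(1/n)$; and if instead you keep the increment of $\phi^\varepsilon$ itself, assumption (A\ref{assu2}) gives only a \emph{pointwise} $L^p$ bound $E|\phi^\varepsilon(t)-\phi^\varepsilon(s)|^p\le K_1|t-s|^p+K_2\varepsilon^p|t-s|^{p/2}$, with no martingale structure on $[-\delta,0)$ to feed a maximal inequality. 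Passing from such pointwise bounds to $E[\sup_{u\in[0,\delta]}|\phi^\varepsilon(\lfloor nt\rfloor/n-u)-\phi^\varepsilon(t-u)|^p]$ requires a Kolmogorov--chaining argument, which degrades the exponent (roughly from $|t-s|^p$ to $h^{p-1}$ at scale $h=1/n$) and does not return the clean $O(1/n^p)$. So the ``delicate step'' you flag is a genuine gap as written; it would be closed by assuming $\phi$ Lipschitz, or a uniform-in-$u$ version of the H\"older estimate in (A\ref{assu2}). To be fair, the paper's own proof does not resolve this either --- it asserts the rate from mere continuity of $X^0$ --- so your proposal reproduces the published argument and is, if anything, more candid about where the missing ingredient lies.
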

\begin{proof}[\bf{Proof}]
Since it is easy to see that 
\begin{equation*}
\left|Y_{t}^{n,\varepsilon}-X_t^0\right|^p\leq\left\|Y_{t-\cdot}^{n,\varepsilon}-X_{t-\cdot}^0\right\|_{\infty}^p\quad a.s.,
\end{equation*}
it suffices to show that $E\left[\sup_{0\leq t\leq 1}\left\|Y_{t-\cdot}^{n,\varepsilon}-X_{t-\cdot}^{0}\right\|_{\infty}^p\right]=O\left(\varepsilon^p\right)+O\left(1/n^p\right)$ ~ as $\varepsilon\rightarrow 0$ and $n\to\infty$. 
We shall prove only the case where $\delta \ge 1$ because the proof for $\delta\leq 1$ is almost the same. It follows from {\upshape(A\ref{assu1})} and the Burkholder-Davis-Gundy inequality that
\begin{align*}
E\left[\sup_{0\leq t\leq 1}\left\|X_{t-\cdot}^\varepsilon-X_{t-\cdot}^{0}\right\|_{\infty}^p\right] &\leq E\left[\sup_{0\leq t\leq 1}\sup_{0\leq s\leq t}\big\|X_{t-\cdot}^\varepsilon-X_{t-\cdot}^{0}\big\|_{\infty}^p\right]+E\left[\sup_{0\leq t\leq 1}\sup_{t\leq s\leq \delta}\big\|X_{t-\cdot}^\varepsilon-X_{t-\cdot}^{0}\big\|_\infty^p\right]\\
&\leq 2^pE\Biggl[\sup_{0\leq t \leq 1}\bigg|\int_{0}^{t}b\big(X_{s}^\varepsilon,H(X_{s-\cdot}^\varepsilon),\theta_{0}\big)-b\big(X_{s}^{0},H(X_{s-\cdot}^0),\theta_{0}\big)\,\df s\bigg|^p\Biggr]\\
&\quad +2^p\varepsilon^p E\Biggl[\sup_{0\leq t \leq 1}\bigg|\int_{0}^{t}\sigma\big(X_{s}^\varepsilon,H(X_{s-\cdot}^\varepsilon),\beta_{0}\big)\,\df W_s\bigg|^p\Biggr]\\
&\quad +E\left[\sup_{0\leq t\leq 1}\sup_{t\leq s\leq \delta}\bigl|\phi^\varepsilon(t-s)-\phi(t-s)\bigr|^p\right]\\
&\leq 2^pK^pE\left[\sup_{0\leq t \leq 1}\int_{0}^{t}\big|X_{s}^\varepsilon-X_{s}^{0}\big|^p+\big|H(X_{s-\cdot}^\varepsilon)-H(X_{s-\cdot}^0)\big|^p\,\df s\right]\\
&\quad +\varepsilon^p C_pE\left[\int_0^1\Big|\sigma_{ij}\big(X_{s}^\varepsilon,H(X_{s-\cdot}^\varepsilon),\beta_{0}\big)\Big|^{p/2}\,\df s\right]\\
&\quad +E\left[\sup_{-\delta \leq t\leq 0}\bigl|\phi^\varepsilon(t)-\phi(t)\bigr|^p\right]\\
&\leq 2^pK^p\left(1+\mu\left([0,\delta]\right)^p\right)E\left[\int_{0}^{1}\sup_{0\leq v\leq s}\|X_{v-\cdot}^\varepsilon-X_{v-\cdot}^0\big\|_\infty^p\,\df s\right]\\
&\quad + \varepsilon^p C_p\Big(1+E\left[\sup_{0\leq s \leq 1}\big|X_{s}\big|^{p/2}\right]+\mu([0,\delta])^{p/2}E\left[\sup_{0\leq s \leq 1}\big\|X_{s-\cdot}^0\big\|_{\infty}^{p/2}\right]\Big)\\
&\quad +E\left[\sup_{-\delta \leq t\leq 0}\bigl|\phi^\varepsilon(t)-\phi(t)\bigr|^p\right],
\end{align*}
where $C_p$ is a constant depending only on $p$. It holds from Gronwall's inequality and (A\ref{assu2}) that
\begin{align*}
E\left[\sup_{0\leq t\leq 1}\left\|X_{t-\cdot}^\varepsilon-X_{t-\cdot}^{0}\right\|_{\infty}^p\right]&\leq \Biggl\{ \varepsilon^p C_p\bigg(1+E\left[\sup_{0\leq s \leq 1}\big|X_{s}\big|^{p/2}\right]+\mu([0,\delta])^{p/2}E\left[\sup_{0\leq s \leq 1}\big\|X_{s-\cdot}^0\big\|_{\infty}^{p/2}\right]\bigg)\\
&\quad +E\left[\sup_{-\delta \leq t\leq 0}\bigl|\phi^\varepsilon(t)-\phi(t)\bigr|^p\right]\Biggr\} e^{2^pK^p\left(1+\mu\left([0,\delta]\right)^p\right)}\\
&=O(\varepsilon^p),
\end{align*}
as $\varepsilon\to 0$. From the continuity of $X_t^0$, the proof is completed.
\end{proof}
\begin{remark}
\upshape
It is satisfied from the proof of Lemma \ref{X_t go to ODE} and {\upshape(A\ref{assu2})} that $\sup_{\varepsilon\in(0,1]}E\left[\sup_{-\delta \leq t\leq 1}\left|X_t^\varepsilon\right|^p\right]<\infty$ for $p\geq 1$.
\end{remark}
\begin{lemm}\label{H(Y) to H(x) and H_n(Y) to H(x)}
Suppose the conditions {\upshape(A\ref{assu1})} and {\upshape(A\ref{assu2})}. Then the following (i) and (ii) hold true as $\varepsilon\to 0$ and $n\to \infty$: for any $p\geq1$,
\renewcommand{\labelenumi}{\rm{(\roman{enumi})}}
\begin{enumerate}
\item 
$\displaystyle 
E\left[\sup_{t\in[0,1]}\left|H\left(Y_{t-\cdot}^{n,\varepsilon}\right)-H\left(X_{t-\cdot}^0\right)\right|^p\right]=O\left(\varepsilon^p\right)+O\left(1/n^p\right). 
$

\item
$\displaystyle 
E\left[\sup_{t\in[0,1]}\left|H_n\left(Y_{t-\cdot}^{n,\varepsilon}\right)-H\left(X_{t-\cdot}^0\right)\right|^p\right]=O\left(\varepsilon^p\right)+O\left(1/n^p\right).$ 
\end{enumerate}
\begin{proof}[\bf{Proof}]
(i) From Lemma \ref{X_t go to ODE},
\begin{align*}
E\left[\sup_{t\in[0,1]}\left|H\left(Y_{t-\cdot}^{n,\varepsilon}\right)-H\left(X_{t-\cdot}^0\right)\right|^p\right] 
&\leq \mu\left([0,\delta]\right) E\left[\sup_{t\in[0,1]}\left\|Y_{t-\cdot}^{n,\varepsilon}-X_{t-\cdot}^{0}\right\|_{\infty}^p\right]\\
&= O\left(\varepsilon^p\right)+O\left(1/n^p\right),
\end{align*}
as $\varepsilon\to 0$ and $n\to \infty$.\\
(ii) From Lemma \ref{X_t go to ODE} and the continuity of $X^0_t$, we have
\begin{align*}
E\left[\sup_{t\in[0,1]}\left|H_n\left(Y_{t-\cdot}^{n,\epsilon}\right)-H\left(X_{t-\cdot}^0\right)\right|^{p}\right]&\leq \int_{0}^{\delta}E\left[\sup_{t\in[0,1]}\left|Y_{t-\lfloor ns \rfloor/n}^{n,\epsilon}-X_{t-s}^0\right|^{p}\right]\,\mu(\df s)\\
&\leq 2^{p-1}\int_{0}^{\delta}\Bigg\{E\left[\sup_{t\in[0,1]}\left|Y_{t-\lfloor ns \rfloor/n}^{n,\epsilon}-X_{\lfloor nt \rfloor/n-\lfloor ns \rfloor/n}^0\right|^p\right]\\
&\quad +E\left[\sup_{t\in[0,1]}\left|X_{\lfloor nt \rfloor/n-\lfloor ns \rfloor/n}^0-X_{t-s}^0\right|^p\right]\Bigg\}\,\mu(\df s)\\
&= O\left(\epsilon^p\right)+O\left(1/n^p\right),
\end{align*}
as $\varepsilon\to 0$ and $n\to \infty$.
\end{proof}
\end{lemm}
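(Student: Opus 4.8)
The plan is to deduce both estimates from Lemma \ref{X_t go to ODE} by exploiting two structural facts: that $H$ is a bounded linear functional on $C([0,\delta];\mathbb{R}^d)$ with operator norm at most $\mu([0,\delta])$, and that $H_n$ is nothing but the exact integral of the piecewise-constant (floor) interpolation of the delay argument against $\mu$. In both parts I would first pass the difference through the functional, bound it pointwise in $t$, then take $\sup_{t\in[0,1]}$, raise to the $p$-th power and take expectations, at which point Lemma \ref{X_t go to ODE} supplies the rate.

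For part (i), I would write
\[
H(Y_{t-\cdot}^{n,\varepsilon})-H(X_{t-\cdot}^0)=\int_0^\delta\bigl(Y_{t-u}^{n,\varepsilon}-X_{t-u}^0\bigr)\,\mu(\df u),
\]
so that its modulus is at most $\mu([0,\delta])\,\|Y_{t-\cdot}^{n,\varepsilon}-X_{t-\cdot}^0\|_\infty$. Taking $\sup_{t\in[0,1]}$, the $p$-th power and expectation, the second display of Lemma \ref{X_t go to ODE} immediately yields $O(\varepsilon^p)+O(1/n^p)$. (If one keeps the $\mu$-integral inside the $p$-th power, Jensen's inequality contributes only the harmless constant factor $\mu([0,\delta])^{p-1}$, since $\mu$ is finite.)

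For part (ii), the additional ingredient is the time-discretization inside $H_n$. I would first observe the exact identity: for $s\in[(i-1)/n,i/n)$ one has $\lfloor ns\rfloor/n=(i-1)/n$, and for $s\in[\delta_n,\delta]$ one has $\lfloor ns\rfloor/n=\delta_n$, so that $H_n(Y_{t-\cdot}^{n,\varepsilon})=\int_0^\delta Y_{t-\lfloor ns\rfloor/n}^{n,\varepsilon}\,\mu(\df s)$ with no approximation error in this rewriting. Subtracting $H(X_{t-\cdot}^0)=\int_0^\delta X_{t-s}^0\,\mu(\df s)$ and applying the triangle inequality under the integral, I would split the pointwise error as
\[
\bigl|Y_{t-\lfloor ns\rfloor/n}^{n,\varepsilon}-X_{t-s}^0\bigr|\le\bigl|Y_{t-\lfloor ns\rfloor/n}^{n,\varepsilon}-X_{\lfloor nt\rfloor/n-\lfloor ns\rfloor/n}^0\bigr|+\bigl|X_{\lfloor nt\rfloor/n-\lfloor ns\rfloor/n}^0-X_{t-s}^0\bigr|.
\]
Since $\lfloor nt\rfloor/n-\lfloor ns\rfloor/n$ is itself a grid point of the form ``grid time minus a delay in $[0,\delta]$'', the first term is dominated uniformly in $s$ by $\sup_{0\le t\le1}\|Y_{t-\cdot}^{n,\varepsilon}-X_{t-\cdot}^0\|_\infty$, which Lemma \ref{X_t go to ODE} controls at rate $O(\varepsilon^p)+O(1/n^p)$. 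The second term is purely deterministic: because $X^0$ is continuously differentiable and hence Lipschitz on the compact interval $[-\delta,1]$, and $|(\lfloor nt\rfloor/n-\lfloor ns\rfloor/n)-(t-s)|\le 2/n$, it is $O(1/n^p)$ uniformly in $(t,s)$. Integrating against the finite measure $\mu$ and combining the two bounds gives the claim.

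The only genuinely delicate point is the bookkeeping in part (ii): I must keep the floor-function time arguments of $Y^{n,\varepsilon}$ and $X^0$ aligned so that the stochastic term is exactly an instance of the quantity bounded in Lemma \ref{X_t go to ODE} (including its history part, captured by the $\|\cdot\|_\infty$ norm), and I must verify that the remaining discretization error is genuinely $O(1/n)$ and not slower. Both hinge on the Lipschitz regularity of the deterministic limit $X^0$ together with the elementary bound $|\lfloor n\cdot\rfloor/n-(\cdot)|\le 1/n$; everything else is a routine use of the triangle inequality, Jensen's inequality, and the finiteness of $\mu([0,\delta])$.
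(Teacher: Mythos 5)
Your proposal is correct and follows essentially the same route as the paper: part (i) is the same operator-norm bound reduced to Lemma \ref{X_t go to ODE}, and part (ii) uses the identical triangle-inequality split of $Y_{t-\lfloor ns\rfloor/n}^{n,\varepsilon}-X_{t-s}^0$ through the intermediate point $X^0_{\lfloor nt\rfloor/n-\lfloor ns\rfloor/n}$. If anything you are slightly more careful than the paper, which invokes only the ``continuity of $X^0_t$'' for the deterministic term where, as you note, the Lipschitz regularity of $X^0$ (from the ODE on $[0,1]$ and (A\ref{assu2}) on the history) is what actually delivers the $O(1/n^p)$ rate rather than mere $o(1)$.
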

\begin{lemm}\label{order of SFDEs}
Suppose the conditions {\upshape(A\ref{assu1})} and {\upshape(A\ref{assu2})}. Then it holds that
\begin{align*}
E\left[\left|H(X_{t-\cdot})-H_n(X_{t-\cdot})\right|^p\right]=O\left(n^{-p}\right)+O\left(\varepsilon^pn^{-p/2}\right),
\end{align*}
for $p\geq1$.
\begin{proof}[\bf{Proof}]
From {\upshape(A\ref{assu1})} and {\upshape(A\ref{assu2})}, we find that
\begin{align*}
E\left[\left|H\left(X_{t-\cdot}\right)-H_n\left(X_{t-\cdot}\right)\right|^p\right] &\leq \int_{0}^{\delta}E\left[\left|X_{t-s}-X_{t-\lfloor ns \rfloor/n}\right|^p\right]\,\mu(\df s)\\
&=O\left(n^{-p}\right)+O\left(\varepsilon^pn^{-p/2}\right).
\end{align*}
\end{proof}
\end{lemm}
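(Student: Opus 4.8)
The plan is to reduce the claim to a single increment estimate for $X^\varepsilon$ over a time lag of order $1/n$. First I would note that $H_n$ is exactly a left-endpoint Riemann sum for $H$ against $\mu$: for $u\in[(i-1)/n,i/n)$ one has $\lfloor nu\rfloor/n=(i-1)/n$, and for $u\in[\delta_n,\delta]$ one has $\lfloor nu\rfloor/n=\delta_n$, so that
\[
H_n(X_{t-\cdot}^\varepsilon)=\int_0^\delta X_{t-\lfloor nu\rfloor/n}^\varepsilon\,\mu(\df u),\qquad H(X_{t-\cdot}^\varepsilon)-H_n(X_{t-\cdot}^\varepsilon)=\int_0^\delta\big(X_{t-u}^\varepsilon-X_{t-\lfloor nu\rfloor/n}^\varepsilon\big)\,\mu(\df u).
\]
Applying Jensen's inequality with respect to the normalized finite measure $\mu/\mu([0,\delta])$ (convexity of $|\cdot|^p$, $p\ge1$) moves the $p$-th power inside the integral at the cost of a factor of the total mass:
\[
E\left[\left|H(X_{t-\cdot}^\varepsilon)-H_n(X_{t-\cdot}^\varepsilon)\right|^p\right]\le \mu([0,\delta])^{p-1}\int_0^\delta E\left[\left|X_{t-u}^\varepsilon-X_{t-\lfloor nu\rfloor/n}^\varepsilon\right|^p\right]\mu(\df u).
\]

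The core of the argument is then the moment bound on increments: for all $s,t'\in[-\delta,1]$,
\[
E\left[\left|X_{t'}^\varepsilon-X_{s}^\varepsilon\right|^p\right]\le C\big(|t'-s|^p+\varepsilon^p|t'-s|^{p/2}\big),
\]
with $C$ independent of $\varepsilon$. I would split according to where $s,t'$ lie. When both belong to $[-\delta,0]$ this is precisely the second display of {\upshape(A\ref{assu2})}. When both belong to $[0,1]$, I would write the difference as $\int_s^{t'}b(X_r^\varepsilon,H(X_{r-\cdot}^\varepsilon),\theta_0)\,\df r+\varepsilon\int_s^{t'}\sigma(X_r^\varepsilon,H(X_{r-\cdot}^\varepsilon),\beta_0)\,\df W_r$; Hölder's inequality applied to the drift term gives an $O(|t'-s|^p)$ bound, while the Burkholder--Davis--Gundy inequality applied to the stochastic term gives $\varepsilon^pE\big[(\int_s^{t'}|\sigma|^2\,\df r)^{p/2}\big]=O(\varepsilon^p|t'-s|^{p/2})$ (the case $1\le p<2$ reduces to $p=2$ via Lyapunov's inequality and $(a+b)^{p/2}\le a^{p/2}+b^{p/2}$). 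In both estimates the integrands are controlled by the linear-growth bound {\upshape(A\ref{assu1})} together with the uniform moment bound $\sup_{\varepsilon\in(0,1]}E\big[\sup_{-\delta\le r\le1}|X_r^\varepsilon|^p\big]<\infty$ from Lemma \ref{having solution} and the Remark following Lemma \ref{X_t go to ODE}, which render $E[|b(\cdots)|^p]$ and $E[|\sigma(\cdots)|^p]$ bounded uniformly in $r$ and $\varepsilon$. The mixed case $s<0<t'$ follows by inserting $X_0^\varepsilon$ and combining the two previous cases, using $|s|,|t'|\le|t'-s|$.

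To finish, since $|u-\lfloor nu\rfloor/n|\le 1/n$ for every $u$, taking $|t'-s|\le 1/n$ in the increment bound yields $E\big[|X_{t-u}^\varepsilon-X_{t-\lfloor nu\rfloor/n}^\varepsilon|^p\big]=O(n^{-p})+O(\varepsilon^pn^{-p/2})$ uniformly in $u\in[0,\delta]$; integrating against the finite measure $\mu$ and absorbing $\mu([0,\delta])^{p}$ into the constant gives the stated rate. I expect the main obstacle to be the increment estimate of the second paragraph: one must check that the delay term $H(X_{r-\cdot}^\varepsilon)$ inside the coefficients does not spoil the uniform-in-$\varepsilon$ control (this is exactly where {\upshape(A\ref{assu1})} and the uniform moment bound enter), and that the stochastic-integral contribution scales like $\varepsilon^p|t'-s|^{p/2}$ rather than $\varepsilon^p|t'-s|^p$, which is what produces the second term $O(\varepsilon^pn^{-p/2})$ in the assertion.
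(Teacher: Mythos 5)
Your proposal is correct and follows essentially the same route as the paper: the paper's (very terse) proof also bounds $E\left[\left|H(X_{t-\cdot})-H_n(X_{t-\cdot})\right|^p\right]$ by $\int_0^\delta E\left[\left|X_{t-s}^\varepsilon-X_{t-\lfloor ns\rfloor/n}^\varepsilon\right|^p\right]\mu(\df s)$ and then invokes the increment estimate of order $O(n^{-p})+O(\varepsilon^p n^{-p/2})$, which is exactly the bound you derive in detail via (A\ref{assu2}) on $[-\delta,0]$ and H\"older plus Burkholder--Davis--Gundy on $[0,1]$. Your version is in fact slightly more careful, since you make explicit the Jensen factor $\mu([0,\delta])^{p-1}$ and the case splitting that the paper leaves implicit.
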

\noindent
For the following Lemmas, we shall use the notations:
\begin{enumerate}
\renewcommand{\labelenumi}{(N\theenumi)}
\setcounter{enumi}{\value{enumimemory}}
\item let $R_{k-1}$ denote a function $(0,1]\to\mathbb{R}$ for which there exists a constant $C$ such that 
\begin{equation*}
\left|R_{k-1}(a)\right|\leq aC\left(1+\|X_{t_{k-1}-\cdot}\|_{\infty}\right)^C,
\end{equation*}
for all $a > 0$.
\setcounter{enumimemory}{\value{enumi}}
\end{enumerate}
\begin{lemm} \label{prepare for conditional expectation}
Suppose the conditions {\upshape(A\ref{assu1})} and {\upshape(A\ref{assu2})}. For $p\geq1$ and $t_{k-1}\leq t\leq t_k$, it holds that
\begin{align*}
E\left[\left|X_t^\varepsilon-X_{t_{k-1}}^\varepsilon\right|^p+\left|H\left(X_{t-\cdot}^\varepsilon\right)-H\bigl(X_{t_{k-1}-\cdot}^\varepsilon\bigr)\right|^p\bigg|\mathcal{F}_{t_{k-1}}\right]&\leq \Phi_p^\varepsilon(t)+R_{k-1}\left(\left(t-t_{k-1}\right)^p\right)\\
&\quad +R_{k-1}\left(\varepsilon^p\left(t-t_{k-1}\right)^{p/2}\right)
\end{align*}
where $C_p$ is constant depending only on $p$, and $\Phi_p^\varepsilon(\cdot)$ is a function: 
\begin{equation*}
\Phi_p^\varepsilon(t)=C_p\left\{\int_{[t_{k-1},t_k]}\left|\phi^\varepsilon(0)-\phi^\varepsilon(t_{k-1}-s)\right|^p\,\mu(\df s)+\int_{[0,\delta]}\left|\phi^\varepsilon(t-s)-\phi^\varepsilon(t_{k-1}-s)\right|^p\,\mu(\df s)\right\}.
\end{equation*}
\end{lemm}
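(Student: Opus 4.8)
The plan is to handle the two increments separately and to route each contribution to one of the three terms on the right-hand side. For the first increment I would start from the integral form of \eqref{SFDEs}, namely
\[
X_t^\varepsilon - X_{t_{k-1}}^\varepsilon = \int_{t_{k-1}}^{t} b\big(X_s^\varepsilon, H(X_{s-\cdot}^\varepsilon),\theta_0\big)\,\df s + \varepsilon\int_{t_{k-1}}^{t} \sigma\big(X_s^\varepsilon, H(X_{s-\cdot}^\varepsilon),\beta_0\big)\,\df W_s,
\]
raise it to the $p$th power, take $E[\,\cdot\mid\mathcal{F}_{t_{k-1}}]$, and apply the conditional Jensen (or H\"older) inequality to the drift part together with the conditional Burkholder--Davis--Gundy inequality to the stochastic part. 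The linear-growth bound of (A\ref{assu1}) then converts both pieces into integrals of $E[(1+|X_s^\varepsilon|+|H(X_{s-\cdot}^\varepsilon)|)^p\mid\mathcal{F}_{t_{k-1}}]$, and the prefactors $(t-t_{k-1})^{p}$ and $\varepsilon^{p}(t-t_{k-1})^{p/2}$ that emerge are exactly the arguments of the two $R_{k-1}$ functions in the statement.

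The technical core is therefore the conditional moment estimate
\[
\sup_{t_{k-1}\le s\le t_k} E\Big[\big(1+|X_s^\varepsilon|+|H(X_{s-\cdot}^\varepsilon)|\big)^{p}\,\Big|\,\mathcal{F}_{t_{k-1}}\Big]\le C\big(1+\|X_{t_{k-1}-\cdot}\|_{\infty}\big)^{C},
\]
which I would prove by a localized Gronwall argument on the single mesh interval. Setting $M_s:=\sup_{t_{k-1}\le v\le s}|X_v^\varepsilon|$ and bounding the delayed average by $|H(X_{v-\cdot}^\varepsilon)|\le \mu([0,\delta])\big(\|X_{t_{k-1}-\cdot}\|_{\infty}+M_v\big)$ --- the first term being $\mathcal{F}_{t_{k-1}}$-measurable and the second carrying the forward dependence --- one reaches, after BDG and (A\ref{assu1}), an inequality $E[M_s^{p}\mid\mathcal{F}_{t_{k-1}}]\le C(1+\|X_{t_{k-1}-\cdot}\|_{\infty})^{p}+C\int_{t_{k-1}}^{s}E[M_v^{p}\mid\mathcal{F}_{t_{k-1}}]\,\df v$. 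Conditional Gronwall over an interval of length $1/n$ closes the loop and gives the bound with an $n$-independent constant; the finiteness needed to run Gronwall is supplied by Lemma \ref{having solution}. This is the step I expect to be the main obstacle, precisely because the delay feeds the future values of $X^\varepsilon$ on $[t_{k-1},t_k]$ back into $H$, so the recursion must be arranged to separate cleanly the $\mathcal{F}_{t_{k-1}}$-measurable past from the part still under control.

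For the second increment I would write
\[
H(X_{t-\cdot}^\varepsilon)-H(X_{t_{k-1}-\cdot}^\varepsilon)=\int_{[0,\delta]}\big(X_{t-u}^\varepsilon-X_{t_{k-1}-u}^\varepsilon\big)\,\mu(\df u)
\]
and split $[0,\delta]$ according to the signs of $t-u$ and $t_{k-1}-u$. On $\{u\le t_{k-1}\}$ both time arguments are nonnegative, so $X_{t-u}^\varepsilon-X_{t_{k-1}-u}^\varepsilon$ is once more an SDE increment over an interval of length $t-t_{k-1}$ and is absorbed into the two $R_{k-1}$ terms exactly as in the first step. On $\{u>t\}$ both arguments lie in $[-\delta,0)$, so the difference equals $\phi^\varepsilon(t-u)-\phi^\varepsilon(t_{k-1}-u)$, which yields the second integral in $\Phi_p^\varepsilon$. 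On the straddling set $\{t_{k-1}<u\le t\}$ I would insert $X_0^\varepsilon=\phi^\varepsilon(0)$ and write $X_{t-u}^\varepsilon-\phi^\varepsilon(t_{k-1}-u)=\big(X_{t-u}^\varepsilon-\phi^\varepsilon(0)\big)+\big(\phi^\varepsilon(0)-\phi^\varepsilon(t_{k-1}-u)\big)$, where the first bracket is an SDE increment over $[0,t-u]\subseteq[0,t-t_{k-1}]$ feeding the $R_{k-1}$ terms and the second bracket is exactly the first integral in $\Phi_p^\varepsilon$. Finally I would apply $|\sum_i a_i|^{p}\le C_p\sum_i|a_i|^{p}$ together with Jensen against the finite measure $\mu$ to bring the $\mu$-integral inside, use the conditional moment bound to dispose of the SDE increments, and collect terms, letting $C_p$ absorb $\mu([0,\delta])$ and the BDG/Gronwall constants to obtain the claimed inequality.
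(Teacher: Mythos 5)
Your proposal is correct and follows essentially the same route as the paper: the same three-case splitting of the delay integral according to whether $t-u$ and $t_{k-1}-u$ fall in the initial segment or in the SDE regime, with $\phi^\varepsilon(0)$ inserted on the straddling set to produce the two integrals in $\Phi_p^\varepsilon$, followed by a Gronwall closure. The only organizational difference is that the paper (following Kessler's Lemma 6) runs Gronwall directly on the conditional moments of the increments $|X_s^\varepsilon-X_{t_{k-1}}^\varepsilon|$ and $|H(X_{s-\cdot}^\varepsilon)-H(X_{t_{k-1}-\cdot}^\varepsilon)|$, using the Lipschitz condition so that the frozen coefficients go straight into the $R_{k-1}$ terms, whereas you first prove a conditional moment bound by a separate localized Gronwall and then invoke linear growth; both yield the stated estimate.
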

\begin{proof}[\bf{Proof}]
In the same way as Lemma 6 in Kessler \cite{kessler}, we have
\begin{align}
E\left[\left|X_{t}^\varepsilon-X_{t_{k-1}}^\varepsilon\right|^{p}\bigg|\mathcal{F}_{t_{k-1}}\right]&\leq C_p \int_{t_{k-1}}^{t}E\left[\left|X_{s}^\varepsilon-X_{t_{k-1}}^\varepsilon\right|^{p}+\left|H\left(X_{s-\cdot}^\varepsilon\right)-H\big(X_{t_{k-1}-\cdot}^\varepsilon\big)\right|^{p}\bigg|\mathcal{F}_{t_{k-1}}\right]\,\df s\notag\\
&\quad +R_{k-1}\Big(\big(t-t_{k-1}\big)^p\Big)+R_{k-1}\Big(\varepsilon^p\big(t-t_{k-1}\big)^{p/2}\Big), \label{same kessler}
\end{align}
where $C_p$ is constant depending only on $p$. We find that
\begin{align*}
E\left[\left|H\left(X_{t-\cdot}^\varepsilon\right)-H\big(X_{t_{k-1}-\cdot}^\varepsilon\big)\right|^p\bigg|\mathcal{F}_{t_{k-1}}\right]\leq \int_0^\delta E\left[\left|X_{t-s}^\varepsilon-X_{t_{k-1}-s}^\varepsilon\right|^p\bigg|\mathcal{F}_{t_{k-1}}\right]\,\mu(\df s).
\end{align*}
Next, we consider three cases:
\begin{center}
(b1)\ \ $t_{k-1}> s$;\qquad (b2)\ \ $t_{k-1}\leq s < t$;\qquad (b3)\ \ $t\leq s$.
\end{center}

\noindent (b1) In the same way as \eqref{same kessler}, we find that
\begin{align}
E\left[\left|X_{t-s}^\varepsilon-X_{t_{k-1}-s}^\varepsilon\right|^p\bigg|\mathcal{F}_{t_{k-1}}\right]&\leq C_p\int_{t_{k-1}-s}^{t-s}E\left[\left|X_u^\varepsilon-X_{t_{k-1}}^\varepsilon\right|^{p}|\mathcal{F}_{t_{k-1}}\right]\,\df u\notag\\
&\quad +C_p\int_{t_{k-1}-s}^{t-s}E\left[\left|H\left(X_{u-\cdot}^\varepsilon\right)-H\big(X_{t_{k-1}-\cdot}^\varepsilon\big)\right|^{p}\bigg|\mathcal{F}_{t_{k-1}}\right]\,\df u\notag\\
&\quad +R_{k-1}\Big(\big(t-t_{k-1}\big)^p\Big)+R_{k-1}\Big(\varepsilon^p\big(t-t_{k-1}\big)^{p/2}\Big).\label{b1}
\end{align}

\noindent (b2) In the same way as \eqref{same kessler},
\begin{align}
E\left[\left|X_{t-s}^\varepsilon-X_{t_{k-1}-s}^\varepsilon\right|^p\bigg|\mathcal{F}_{t_{k-1}}\right]&= E\left[\left|X_{t-s}^\varepsilon-\phi(t_{k-1}-s)\right|^p\bigg|\mathcal{F}_{t_{k-1}}\right]\notag\\
&\leq2^{p-1}\left(E\left[\left|X_{t-s}^\varepsilon-X_0^\varepsilon\right|^p\bigg|\mathcal{F}_{t_{k-1}}\right]+\left|\phi(0)-\phi(t_{k-1}-s)\right|^p\right)\notag\\
&\leq \tilde{C}_p\int_{0}^{t-s}E\left[\left|X_{u}^\varepsilon-X_{t_{k-1}}^\varepsilon\right|^{p}+\left|H\left(X_{u-\cdot}^\varepsilon\right)-H\big(X_{t_{k-1}-\cdot}^\varepsilon\big)\right|^{p}\bigg|\mathcal{F}_{t_{k-1}}\right]\,\df u \notag\\
&\quad +2^{p-1}|\phi(0)-\phi(t_{k-1}-s)|^p \notag\\
&\quad +R_{k-1}\Big(\big(t-t_{k-1}\big)^p\Big)+R_{k-1}\Big(\varepsilon^p\big(t-t_{k-1}\big)^{p/2}\Big), \label{b2}
\end{align}
\quad where $\tilde{C}_p$ is constant depending only on $p$.\\

\noindent (b3) It is easy to find that
\begin{equation}
E\left[\left|X_{t-s}^\varepsilon-X_{t_{k-1}-s}^\varepsilon\right|^p\bigg|\mathcal{F}_{t_{k-1}}\right]=\left|\phi(t-s)-\phi(t_{k-1}-s)\right|^p.\label{b3}
\end{equation}
From \eqref{b1}, \eqref{b2} and \eqref{b3}, we have
\begin{align*}
E\left[\left|H\left(X_{t-\cdot}^\varepsilon\right)-H\big(X_{t_{k-1}-\cdot}^\varepsilon\big)\right|^p\bigg|\mathcal{F}_{t_{k-1}}\right]&\leq C_p\bigg\{\mu\left(\left[0,\delta\right]\right) \int_{0}^{t}E\left[\left|X_{u}^\varepsilon-X_{t_{k-1}}^\varepsilon\right|^{p}\bigg|\mathcal{F}_{t_{k-1}}\right]\,\df u\\
&\qquad +\mu\left(\left[0,\delta\right]\right) \int_{0}^{t}E\left[\left|H\left(X_{u-\cdot}^\varepsilon\right)-H\big(X_{t_{k-1}-\cdot}^\varepsilon\big)\right|^{p}\bigg|\mathcal{F}_{t_{k-1}}\right]\,\df u\\
&\qquad +\int_{[t_{k-1},t_k]}\left|\phi^\varepsilon(0)-\phi^\varepsilon(t_{k-1}-s)\right|^p\,\mu\left(\df s\right)\\
&\qquad +\int_{[0,\delta]}\left|\phi^\varepsilon(t-s)-\phi^\varepsilon(t_{k-1}-s)\right|^p\,\mu\left(\df s\right)\bigg\}\\
&\quad +R_{k-1}\Big(\big(t-t_{k-1}\big)^p\Big)+R_{k-1}\Big(\varepsilon^p\big(t-t_{k-1}\big)^{p/2}\Big).
\end{align*}
By {\upshape(A\ref{assu1})}, \eqref{same kessler} and Gronwall's inequality, we obtain the conclusion.
\end{proof}
\begin{remark}
\upshape
Lemma \ref{prepare for conditional expectation} is satisfied for $t_{k-1}\geq \delta$ and for $t\geq \delta$.
\end{remark}
\noindent
For Lemma \ref{conditional expectation}, we use some abbreviations:
\begin{enumerate}
\renewcommand{\labelenumi}{(N\theenumi)}
\setcounter{enumi}{\value{enumimemory}}
\item Denote by
$b^i_{t,H}=b^i\left(X_t^\varepsilon,H\left(X_{t-\cdot}^\varepsilon\right),\theta_0\right)$ and
\begin{equation*}
[\sigma\sigma^\top]_{t,H}^{ij}=[\sigma\sigma^\top]^{ij}\left(X_t^\varepsilon,H\left(X_{t-\cdot}^\varepsilon\right),\beta_0\right)
\end{equation*}
for $\left\{b^i\right\}$ and $\left\{[\sigma\sigma^{\top}]^{ij}\right\}$.
\setcounter{enumimemory}{\value{enumi}}
\end{enumerate}
\begin{lemm} \label{conditional expectation}
Suppose the condition {\upshape(A\ref{assu1})}. Then the following (i)-(iv) hold true:
\renewcommand{\labelenumi}{\rm{(\roman{enumi})}}
\begin{enumerate}
\item \label{1dim} 
\begin{align*}
E\left[P_{k}^{i_{1}}(\theta_{0})\Big|\mathcal{F}_{t_{k-1}}\right]=\frac{1}{n}\left(b^{i_{1}}_{t_{k-1},H}-b^{i_{1}}_{t_{k-1},H_n}\right)+\int_{t_{k-1}}^{t_k}\Phi_1^\varepsilon(s)~ds+R_{k-1}\left(\frac{1}{n^2}\right)+R_{k-1}\left(\frac{\varepsilon}{n\sqrt{n}}\right). 
\end{align*}
\item \label{2dim}  
\begin{align*}
E\left[P_{k}^{i_{1}}P_{k}^{i_{2}}\left(\theta_{0}\right)\Big|\mathcal{F}_{t_{k-1}}\right]&= \frac{\varepsilon^{2}}{n}[\sigma\sigma^\top]_{t_{k-1},H}^{i_{1}i_{2}}+\frac{1}{n^2}\left(b_{t_{k-1},H}^{i_1}-b_{t_{k-1},H_n}^{i_1}\right)\left(b_{t_{k-1},H}^{i_2}-b_{t_{k-1},H_n}^{i_2}\right)\\
&\quad +\int_{t_{k-1}}^{t_k}\left\{\Phi_2^\varepsilon(s)+\varepsilon^2\Phi_1^\varepsilon(s)\right\}\,\df s\\
&\quad +R_{k-1}\left(\frac{1}{n}\right)\int_{t_{k-1}}^{t_k}\Phi_1^\varepsilon(s)\,\df s+R_{k-1}(1)\int_{t_{k-1}}^{t_k}\int_{t_{k-1}}^{s}\Phi_1^\varepsilon(u)\,\df u\,\df s\\
&\quad+R_{k-1}\left(\frac{1}{n^3}\right)+R_{k-1}\left(\frac{\varepsilon}{n^2\sqrt{n}}\right)+R_{k-1}\left(\frac{\varepsilon^2}{n^2}\right)+R_{k-1}\left(\frac{\varepsilon^3}{n\sqrt{n}}\right).
\end{align*}
\item \label{3dim}  
\begin{align*}
E\left[P_{k}^{i_{1}}P_{k}^{i_{2}}P_{k}^{i_{3}}(\theta_{0})\Big|\mathcal{F}_{t_{k-1}}\right]&= \frac{1}{n^3}\left(b_{t_{k-1},H}^{i_1}-b_{t_{k-1},H_n}^{i_1}\right)\left(b_{t_{k-1},H}^{i_2}-b_{t_{k-1},H_n}^{i_2}\right)\left(b_{t_{k-1},H}^{i_3}-b_{t_{k-1},H_n}^{i_3}\right)\\
&\quad +\frac{\varepsilon^2}{2n^2}\sum_{\tilde{\sigma}\in G(3)}[\sigma\sigma^\top]^{i_{\tilde{\sigma}(1)}i_{\tilde{\sigma}(2)}}\left(b_{t_{k-1},H}^{i_{\tilde{\sigma}(3)}}-b_{t_{k-1},H_n}^{i_{\tilde{\sigma}(3)}}\right)\\
&\quad +\int_{t_{k-1}}^{t_k}\left\{\Phi_3^\varepsilon(s)+\varepsilon^2\Phi_2^\varepsilon(s)\right\}\,\df s\\
&\quad +R_{k-1}\left(\frac{1}{n}\right)\int_{t_{k-1}}^{t_k}\{\Phi_2^\varepsilon(s)+\varepsilon^2\Phi_1^\varepsilon(s)\}\,\df s\\
&\quad +R_{k-1}\left(\frac{1}{n^2}\right)b_{t_{k-1},H_n}^{i_{\tilde{\sigma}(2)}}\int_{t_{k-1}}^{t_k}\Phi_1^\varepsilon(s)\,\df s\\
&\quad +R_{k-1}(1)\int_{t_{k-1}}^{t_k}\int_{t_{k-1}}^{s}\{\Phi_2^\varepsilon(u)+\varepsilon^2\Phi_1^\varepsilon(u)\}\,\df u\,\df s\\
&\quad +R_{k-1}\left(\frac{1}{n}\right)\int_{t_{k-1}}^{t_k}\int_{t_{k-1}}^s\Phi_1^\varepsilon(u)\,\df u\,\df s\\
&\quad +\sum_{\tilde{\sigma}\in G(3)}b_{t_{k-1},H}^{i_{\tilde{\sigma}(1)}}b_{t_{k-1},H}^{i_{\tilde{\sigma}(2)}}\int_{t_{k-1}}^{t_k}\int_{t_{k-1}}^{s}\int_{t_{k-1}}^{u}\Phi_1^\varepsilon(v)\,\df v\,\df u\,\df s\\
&\quad +R_{k-1}\left(\frac{1}{n^4}\right)+R_{k-1}\left(\frac{\varepsilon}{n^3\sqrt{n}}\right)+R_{k-1}\left(\frac{\varepsilon^2}{n^3}\right)\\
&\quad +R_{k-1}\left(\frac{\varepsilon^3}{n^2\sqrt{n}}\right)+R_{k-1}\left(\frac{\varepsilon^4}{n^2}\right).
\end{align*}
\item \label{4dim}  
\begin{align*}
E\left[P_{k}^{i_{1}}P_{k}^{i_{2}}P_{k}^{i_{3}}P_{k}^{i_{4}}(\theta_{0})\Big|\mathcal{F}_{t_{k-1}}\right]&= \frac{\varepsilon^4}{n^2}\Bigg([\sigma\sigma^\top]_{t_{k-1},H}^{i_1i_2}[\sigma\sigma^\top]_{t_{k-1},H}^{i_3i_4}+[\sigma\sigma^\top]_{t_{k-1},H}^{i_1i_3}[\sigma\sigma^\top]_{t_{k-1},H}^{i_2i_4}\\
&\qquad +[\sigma\sigma^\top]_{t_{k-1},H}^{i_1i_4}[\sigma\sigma^\top]_{t_{k-1},H}^{i_2i_3}\Bigg)\\ 
&\quad +\frac{1}{n^4}\left(b_{t_{k-1},H}^{i_1}-b_{t_{k-1},H_n}^{i_1}\right)\left(b_{t_{k-1},H}^{i_2}-b_{t_{k-1},H_n}^{i_2}\right)\times\\
&\qquad \left(b_{t_{k-1},H}^{i_3}-b_{t_{k-1},H_n}^{i_3}\right)\left(b_{t_{k-1},H}^{i_4}-b_{t_{k-1},H_n}^{i_4}\right)\\
&\quad +\frac{\varepsilon^2}{4n^3}\sum_{\tilde{\sigma}\in G(4)}[\sigma\sigma^{\top}]^{i_{\tilde{\sigma}(1)}i_{\tilde{\sigma}(2)}}\left(b_{t_{k-1},H}^{i_{\tilde{\sigma}(3)}}-b_{t_{k-1},H_n}^{i_{\tilde{\sigma}(3)}}\right)\left(b_{t_{k-1},H}^{i_{\tilde{\sigma}(4)}}-b_{t_{k-1},H_n}^{i_{\tilde{\sigma}(4)}}\right)\\
&\quad +\int_{t_{k-1}}^{t_k}\left\{\Phi_4^\varepsilon(s)+\varepsilon^2\Phi_3^\varepsilon(s)\right\}\,\df s\\
&\quad +R_{k-1}\left(\frac{1}{n}\right)\int_{t_{k-1}}^{t_k}\{\Phi_3^\varepsilon(u)+\varepsilon^2\Phi_2^\varepsilon(s)\}\,\df s\\
&\quad +R_{k-1}\left(\frac{1}{n^2}\right)\int_{t_{k-1}}^{t_k}\{\Phi_2^\varepsilon(s)+\varepsilon^2\Phi_1^\varepsilon(s)\}\,\df s\\
&\quad +R_{k-1}(1)\int_{t_{k-1}}^{s}\{\Phi_3^\varepsilon(u)+\varepsilon^2\Phi_2^\varepsilon(u)\}\,\df u\,\df s\\
&\quad +R_{k-1}\left(\frac{1}{n}\right)\int_{t_{k-1}}^{t_k}\int_{t_{k-1}}^{s}\{\Phi_2^\varepsilon(u)+\varepsilon^2\Phi_1^\varepsilon(u)\}\,\df u\,\df s\\
&\quad +R_{k-1}(\varepsilon^2)\int_{t_{k-1}}^{t_k}\int_{t_{k-1}}^{s}\{\Phi_2^\varepsilon(u)+\varepsilon^2\Phi_1^\varepsilon(u)\}\,\df u\,\df s\\
&\quad +R_{k-1}(1)\int_{t_{k-1}}^{t_k}\int_{t_{k-1}}^{s}\int_{t_{k-1}}^{u}\left\{\Phi_2^\varepsilon(v)+\varepsilon^2\Phi_1^\varepsilon(v)\right\}\,\df v\,\df u\,\df s\\
&\quad +R_{k-1}(1)\int_{t_{k-1}}^{t_k}\int_{t_{k-1}}^s\int_{t_{k-1}}^u\int_{t_{k-1}}^v\Phi_1^\varepsilon(w)\,\df w\,\df v\,\df u\,\df s\\
&\quad +R_{k-1}\left(\frac{1}{n^5}\right)+R_{k-1}\left(\frac{\varepsilon}{n^4\sqrt{n}}\right)+R_{k-1}\left(\frac{\varepsilon^2}{n^4}\right)\\
&\quad +R_{k-1}\left(\frac{\varepsilon^3}{n^3\sqrt{n}}\right)+R_{k-1}\left(\frac{\varepsilon^4}{n^3}\right)+R_{k-1}\left(\frac{\varepsilon^5}{n^2\sqrt{n}}\right).
\end{align*}
\end{enumerate}
\begin{proof}[\bf{Proof}]
(i) It is satisfied from the Lipschitz condition on $b$ in (A\ref{assu1}) that
\begin{align*}
E\left[\left(\Delta_kX^\varepsilon\right)^{i_{1}}-\frac{1}{n}b_{t_{k-1},H}^{i_{1}}\bigg|\mathcal{F}_{t_{k-1}}\right]&= E\left[\int_{t_{k-1}}^{t_{k}}\left(b_{s,H}^{i_{1}}-b_{t_{k-1},H}^{i_{1}}\right)\,\df s \bigg|\mathcal{F}_{t_{k-1}}\right]\\
&\leq E\left[\int_{t_{k-1}}^{t_{k}}\left|b_{s,H}^{i_{1}}-b_{t_{k-1},H}^{i_{1}}\right|\,\df s \bigg|\mathcal{F}_{t_{k-1}}\right]\\
&\leq E\left[\int_{t_{k-1}}^{t_{k}}\left\{\left|X_{s}^\varepsilon-X_{t_{k-1}}^\varepsilon\right|+\left|H\left(X_{s-\cdot}^\varepsilon\right)-H\big(X_{t_{k-1}-\cdot}^\varepsilon\big)\right|\right\}\,\df s \bigg|\mathcal{F}_{t_{k-1}} \right].\\
\end{align*}
From Lemma \ref{prepare for conditional expectation}, the proof of (i) is completed. \\
(ii) We find that
\begin{align*}
E\left[\prod_{j=1}^2\left(\Delta_kX^\varepsilon\right)^{i_j}\Bigg|\mathcal{F}_{t_{k-1}}\right]&= E\left[\int_{t_{k-1}}^{t_{k}}\left\{\sum_{\tilde{\sigma}\in G(2)}\left(\Delta_sX^\varepsilon\right)^{i_{\tilde{\sigma}(1)}}b_{s,H}^{i_{\tilde{\sigma}(2)}}+\varepsilon^{2}[\sigma\sigma^\top]_{s,H}^{i_{1}i_{2}}\right\}\,\df s\Bigg|\mathcal{F}_{t_{k-1}}\right]\\
&= \sum_{\tilde{\sigma}\in G(2)}E\left[\int_{t_{k-1}}^{t_{k}}\left(\Delta_sX^\varepsilon\right)^{i_{\tilde{\sigma}(1)}}\left(b_{s,H}-b_{t_{k-1},H}\right)^{i_{\tilde{\sigma}(2)}}\,\df s\Bigg|\mathcal{F}_{t_{k-1}}\right]\\
&\quad +\sum_{\tilde{\sigma}\in G(2)}E\left[\int_{t_{k-1}}^{t_{k}}\left(\Delta_sX^\varepsilon\right)^{i_{\tilde{\sigma}(1)}}\,\df s\Bigg|\mathcal{F}_{t_{k-1}}\right]b_{t_{k-1},H}^{i_{\tilde{\sigma}(2)}}\\
&\quad +\varepsilon^{2}E\left[\int_{t_{k-1}}^{t_{k}}\left\{[\sigma\sigma^\top]_{s,H}^{i_{1}i_{2}}-[\sigma\sigma^\top]_{t_{k-1},H}^{i_{1}i_{2}}\right\}\,\df s\Bigg|\mathcal{F}_{t_{k-1}}\right]\\
&\quad +\varepsilon^{2}n^{-1}[\sigma\sigma^\top]_{t_{k-1},H}^{i_{1}i_{2}}.
\end{align*}
It follows from the Lipschitz condition on $b$ in (A\ref{assu1}) that
\begin{align*}
E\left[\int_{t_{k-1}}^{t_{k}}\left(\Delta_sX^\varepsilon\right)^{i_{1}}\{b_{s,H}^{i_{2}}-b_{t_{k-1},H}^{i_{2}}\}\,\df s\bigg|\mathcal{F}_{t_{k-1}}\right]&\leq
2K\int_{t_{k-1}}^{t_{k}}E\left[\left|X_{s}^\varepsilon-X_{t_{k-1}}^\varepsilon\right|^{2}\bigg|\mathcal{F}_{t_{k-1}}\right]\,\df s\\
&\quad +2K\int_{t_{k-1}}^{t_{k}}E\left[\left|H\left(X_{s-\cdot}^\varepsilon\right)-H\big(X_{t_{k-1}-\cdot}^\varepsilon\big)\right|^{2}\bigg|\mathcal{F}_{t_{k-1}}\right]\,\df s.
\end{align*}
From Lemma \ref{prepare for conditional expectation}, we have
\begin{align}
E\left[\int_{t_{k-1}}^{t_{k}}\left(\Delta_sX^\varepsilon\right)^{i_{1}}\left\{b_{s,H}^{i_{2}}-b_{t_{k-1},H}^{i_{2}}\right\}\,\df s\bigg|\mathcal{F}_{t_{k-1}}\right]&= \int_{t_{k-1}}^{t_k}\Phi_2^\varepsilon(s)\, \df s\notag\\
&\quad +R_{k-1}\left(\frac{1}{n^3}\right)+R_{k-1}\left(\frac{\varepsilon^2}{n^2}\right).\label{R1}
\end{align}
From the same argument of \eqref{R1}, it holds that
\begin{align}
\varepsilon^{2}E\left[\int_{t_{k-1}}^{t_{k}}\left\{[\sigma\sigma^\top]_{s,H}^{i_{1}i_{2}}-[\sigma\sigma^\top]_{t_{k-1},H}^{i_{1}i_{2}}\right\}\,\df s\bigg|\mathcal{F}_{t_{k-1}}\right]&= \varepsilon^2\int_{t_{k-1}}^{t_k}\Phi_1^\varepsilon(s)\,\df s\notag\\
&\quad +R_{k-1}\left(\frac{\varepsilon^{2}}{n^2}\right)+R_{k-1}\left(\frac{\varepsilon^3}{n\sqrt{n}}\right).\label{R2}
\end{align}
It is satisfied from the proof of (i) that
\begin{align*}
E\left[\left(\Delta_sX^\varepsilon\right)^{i_{1}}\Big|\mathcal{F}_{t_{k-1}}\right] &=(s-t_{k-1})b_{t_{k-1},H}^{i_{1}}+\int_{t_{k-1}}^s\Phi_1^\varepsilon(u)\,\df u\\
&\quad +R_{k-1}\left(\left(s-t_{k-1}\right)^{2}\right)+R_{k-1}\left(\varepsilon\left(s-t_{k-1}\right)^{3/2}\right).
\end{align*} 
Therefore,
\begin{align}
E\left[\int_{t_{k-1}}^{t_{k}}\left(\Delta_sX^\varepsilon\right)^{i_{1}}\,\df s\bigg|\mathcal{F}_{t_{k-1}}\right]b_{t_{k-1},H}^{i_{2}}&= \frac{1}{2n^{2}}b_{t_{k-1},H}^{i_{1}}b_{t_{k-1},H}^{i_{2}}+b_{t_{k-1},H}^{i_{2}}\int_{t_{k-1}}^{t_k}\int_{t_{k-1}}^{s}\Phi_1^\varepsilon(u)\,\df u\,\df s\notag\\
&\quad +R_{k-1}\left(n^{-3}\right)+R_{k-1}\left(\varepsilon n^{-5/2}\right).\label{R3}
\end{align}
It follows from \eqref{R1}-\eqref{R3} that
\begin{align*}
E\left[\prod_{j=1}^2\left(\Delta_kX^\varepsilon\right)^{i_j}\Bigg|\mathcal{F}_{t_{k-1}}\right]&= \frac{\varepsilon^{2}}{n}[\sigma\sigma^\top]_{t_{k-1},H}^{i_{1}i_{2}}+ \frac{1}{n^{2}}b_{t_{k-1},H}^{i_{1}}b_{t_{k-1},H}^{i_{2}}+\int_{t_{k-1}}^{t_k}\left\{\Phi_2^\varepsilon(s)+\varepsilon^2\Phi_1^\varepsilon(s)\right\}\,\df s\\
&\quad +\left\{b_{t_{k-1},H}^{i_{1}}+b_{t_{k-1},H}^{i_{2}}\right\}\int_{t_{k-1}}^{t_k}\int_{t_{k-1}}^{s}\Phi_1^\varepsilon(u)\,\df u\,\df s\\
&\quad +R_{k-1}\left(\frac{1}{n^3}\right)+R_{k-1}\left(\frac{\varepsilon}{n^2\sqrt{n}}\right)+R_{k-1}\left(\frac{\varepsilon^2}{n^2}\right)+R_{k-1}\left(\frac{\varepsilon^{3}}{n\sqrt{n}}\right).
\end{align*}
Therefore,
\begin{align*}
E\left[P_{k}^{i_{1}}P_{k}^{i_{2}}(\theta_{0})\Big|\mathcal{F}_{t_{k-1}}\right]&= E\left[\left(\Delta_kX^\varepsilon\right)^{i_{1}}\left(\Delta_kX^\varepsilon\right)^{i_{2}}\Big|\mathcal{F}_{t_{k-1}}\right]\\
&\quad -\sum_{\tilde{\sigma}\in G(2)}E\left[\left(\Delta_kX^\varepsilon\right)^{i_{\tilde{\sigma}(1)}}\Big|\mathcal{F}_{t_{k-1}}\right]\frac{1}{n}b_{t_{k-1},H_n}^{i_{\tilde{\sigma}(2)}}+\frac{1}{n^{2}}b_{t_{k-1},H_n}^{i_{1}}b_{t_{k-1},H_n}^{i_{2}}\\
&= \frac{\varepsilon^{2}}{n}[\sigma\sigma^\top]_{t_{k-1},H}^{i_{1}i_{2}}+ \frac{1}{n^{2}}b_{t_{k-1},H}^{i_{1}}b_{t_{k-1},H}^{i_{2}}+\int_{t_{k-1}}^{t_k}\left\{4\Phi_2^\varepsilon(s)+\varepsilon^2\Phi_1^\varepsilon(s)\right\}\,\df s\\
&\quad +\left(b_{t_{k-1},H}^{i_{1}}+b_{t_{k-1},H}^{i_{2}}\right)\int_{t_{k-1}}^{t_k}\int_{t_{k-1}}^{s}\Phi_1^\varepsilon(u)\,\df u\,\df s\\
&\quad +R_{k-1}\left(\frac{1}{n^3}\right)+R_{k-1}\left(\frac{\varepsilon}{n^2\sqrt{n}}\right)+R_{k-1}\left(\frac{\varepsilon^2}{n^2}\right)+R_{k-1}\left(\frac{\varepsilon^{3}}{n\sqrt{n}}\right)\\
&\quad -\sum_{\tilde{\sigma}\in G(2)}\frac{1}{n}b_{t_{k-1},H_n}^{i_{\tilde{\sigma}(2)}}\bigg\{\frac{1}{n}b_{t_{k-1},H}^{i_{\tilde{\sigma}(1)}}+\int_{t_{k-1}}^{t_k}\Phi_1^\varepsilon(s)\,\df s\\
&\qquad +R_{k-1}\left(\frac{1}{n^2}\right)+R_{k-1}\left(\frac{\varepsilon}{n\sqrt{n}}\right)\bigg\}\\
&\quad +\frac{1}{n^{2}}b_{t_{k-1},H_n}^{i_{1}}b_{t_{k-1},H_n}^{i_{2}}\\
&= \frac{\varepsilon^{2}}{n}[\sigma\sigma^\top]_{t_{k-1},H}^{i_{1}i_{2}}+\frac{1}{n^2}\left(b_{t_{k-1},H}^{i_1}-b_{t_{k-1},H_n}^{i_1}\right)\left(b_{t_{k-1},H}^{i_2}-b_{t_{k-1},H_n}^{i_2}\right)\\
&\quad +\int_{t_{k-1}}^{t_k}\left\{\Phi_2^\varepsilon(s)+\varepsilon^2\Phi_1^\varepsilon(s)\right\}\,\df s\\
&\quad +R_{k-1}(1)\int_{t_{k-1}}^{t_k}\int_{t_{k-1}}^{s}\Phi_1^\varepsilon(u)\,\df u\,\df s+R_{k-1}\left(\frac{1}{n}\right)\int_{t_{k-1}}^{t_k}\Phi_1^\varepsilon(s)\,\df s\\
&\quad +R_{k-1}\left(\frac{1}{n^3}\right)+R_{k-1}\left(\frac{\varepsilon}{n^2\sqrt{n}}\right)+R_{k-1}\left(\frac{\varepsilon^2}{n^2}\right)+R_{k-1}\left(\frac{\varepsilon^{3}}{n\sqrt{n}}\right).
\end{align*}
(iii) From the same argument as the proof of (ii), it {\blue hold} that
\begin{align*}
E\left[\prod_{j=1}^3\left(\Delta_kX^\varepsilon\right)^{i_j}\Bigg|\mathcal{F}_{t_{k-1}}\right]&= \frac{\varepsilon^2}{2n^2}\sum_{\tilde{\sigma}\in G(3)}[\sigma\sigma^\top]_{t_{k-1},H}^{i_{\tilde{\sigma}(1)}i_{\tilde{\sigma}(2)}}b_{t_{k-1},H}^{i_{\tilde{\sigma}(3)}}+\frac{1}{n^3}b_{t_{k-1},H}^{i_{1}}b_{t_{k-1},H}^{i_{2}}b_{t_{k-1},H}^{i_{3}}\\
&\quad +\int_{t_{k-1}}^{t_k}\{\Phi_3^\varepsilon(s)+\varepsilon^2\Phi_2^\varepsilon(s)\}\,\df s\\
&\quad +R_{k-1}(1)\int_{t_{k-1}}^{t_k}\int_{t_{k-1}}^{s}\{\Phi_2^\varepsilon(u)+\varepsilon^2\Phi_1^\varepsilon(u)\}\,\df u\,\df s\\
&\quad +R_{k-1}(1)\int_{t_{k-1}}^{t_k}\int_{t_{k-1}}^{s}\int_{t_{k-1}}^{u}\Phi_1^\varepsilon(v)\,\df v\,\df u\,\df s\\
&\quad +R_{k-1}\left(\frac{1}{n^4}\right)+R_{k-1}\left(\frac{\varepsilon}{n^3\sqrt{n}}\right)+R_{k-1}\left(\frac{\varepsilon^2}{n^3}\right)\\
&\quad +R_{k-1}\left(\frac{\varepsilon^3}{n^2\sqrt{n}}\right)+R_{k-1}\left(\frac{\varepsilon^4}{n^2}\right),
\end{align*}
and
\begin{align*}
E\left[P_{k}^{i_{1}}P_{k}^{i_{2}}P_{k}^{i_{3}}(\theta_{0})\Big|\mathcal{F}_{t_{k-1}}\right]&= E\left[\prod_{j=1}^3\left(\Delta_kX^\varepsilon\right)^{i_j}\Bigg|\mathcal{F}_{t_{k-1}}\right]-E[(\Delta_kX^\varepsilon)^{i_{1}}(\Delta_kX^\varepsilon)^{i_{3}}|\mathcal{F}_{t_{k-1}}]\frac{1}{n}b_{t_{k-1},H_n}^{i_{2}}\\
&\quad -E[(\Delta_kX^\varepsilon)^{i_{2}}(\Delta_kX^\varepsilon)^{i_{3}}|\mathcal{F}_{t_{k-1}}]\frac{1}{n}b_{t_{k-1},H_n}^{i_{1}}\\
&\quad +E[(\Delta_kX^\varepsilon)^{i_{3}}|\mathcal{F}_{t_{k-1}}]\frac{1}{n^2}b_{t_{k-1},H_n}^{i_{1}}b_{t_{k-1},H_n}^{i_2}\\
&\quad -E[P_{k}^{i_1}P_{k}^{i_2}(\theta_{0})|\mathcal{F}_{t_{k-1}}]\frac{1}{n}b_{t_{k-1},H_n}^{i_3}\\
&= \frac{1}{n^3}\prod_{j=1}^3\left(b_{t_{k-1},H}^{i_j}-b_{t_{k-1},H_n}^{i_j}\right)\\
&\quad+\frac{\varepsilon^2}{2n^2}\sum_{\tilde{\sigma}\in G(3)}[\sigma\sigma^\top]^{i_{\tilde{\sigma}(1)}i_{\tilde{\sigma}(2)}}\left(b_{t_{k-1},H}^{i_{\tilde{\sigma}(3)}}-b_{t_{k-1},H_n}^{i_{\tilde{\sigma}(3)}}\right)\\
&\quad +\int_{t_{k-1}}^{t_k}\{\Phi_3^\varepsilon(s)+\varepsilon^2\Phi_2^\varepsilon(s)\}\,\df s+R_{k-1}\left(\frac{1}{n^2}\right)\int_{t_{k-1}}^{t_k}\Phi_1^\varepsilon(s)\,\df s\\
&\quad +R_{k-1}\left(\frac{1}{n}\right)\int_{t_{k-1}}^{t_k}\{\Phi_2^\varepsilon(s)+\varepsilon^2\Phi_1^\varepsilon(s)\}\,\df s\\
&\quad +R_{k-1}(1)\int_{t_{k-1}}^{t_k}\int_{t_{k-1}}^{s}\{\Phi_2^\varepsilon(u)+\varepsilon^2\Phi_1^\varepsilon(u)\}\,\df u\,\df s\\
&\quad +R_{k-1}\left(\frac{1}{n}\right)\int_{t_{k-1}}^{t_k}\int_{t_{k-1}}^s\Phi_1^\varepsilon(u)\,\df u\,\df s\\
&\quad +R_{k-1}(1)\int_{t_{k-1}}^{t_k}\int_{t_{k-1}}^{s}\int_{t_{k-1}}^{u}\Phi_1^\varepsilon(v)\,\df v\,\df u\,\df s\\
&\quad +R_{k-1}\left(\frac{1}{n^4}\right)+R_{k-1}\left(\frac{\varepsilon}{n^3\sqrt{n}}\right)+R_{k-1}\left(\frac{\varepsilon^2}{n^3}\right)\\
&\quad +R_{k-1}\left(\frac{\varepsilon^3}{n^2\sqrt{n}}\right)+R_{k-1}\left(\frac{\varepsilon^4}{n^2}\right).
\end{align*}
(iv) It follows from the same argument as the proof of (ii) that
\begin{align*}
E\left[\prod_{j=1}^4\left(\Delta_kX^\varepsilon\right)^{i_j}\Bigg|\mathcal{F}_{t_{k-1}}\right]&= \frac{\varepsilon^4}{8n^2}\sum_{\tilde{\sigma}\in G(4)}[\sigma\sigma^\top]_{t_{k-1},H}^{i_{\tilde{\sigma}(1)}i_{\tilde{\sigma}(2)}}[\sigma\sigma^\top]_{t_{k-1},H}^{i_{\tilde{\sigma}(3)}i_{\tilde{\sigma}(4)}}\\
&\quad +\frac{\varepsilon^2}{4n^3}\sum_{\tilde{\sigma}\in G(4)}[\sigma\sigma^{\top}]_{t_{k-1},H}^{i_{\tilde{\sigma}(1)}i_{\tilde{\sigma}(2)}}b_{t_{k-1},H}^{i_{\tilde{\sigma}(3)}}b_{t_{k-1},H}^{i_{\tilde{\sigma}(4)}}+\frac{1}{n^4}\prod_{j=1}^4b_{t_{k-1},H}^{i_j}\\
&\quad +\int_{t_{k-1}}^{t_k}\left\{\Phi_4^\varepsilon(s)+\varepsilon^2\Phi_3^\varepsilon(s)\right\}\,\df s\\
&\quad +R_{k-1}(1)\int_{t_{k-1}}^{t_k}\int_{t_{k-1}}^{s}\{\Phi_3^\varepsilon(u)+\varepsilon^2\Phi_2^\varepsilon(u)\}\,\df u\,\df s\\
&\quad +R_{k-1}(\varepsilon^2)\int_{t_{k-1}}^{t_k}\int_{t_{k-1}}^{s}\{\Phi_2^\varepsilon(u)+\varepsilon^2\Phi_1^\varepsilon(u)\}\,\df u\,\df s\\
&\quad +R_{k-1}(1)\int_{t_{k-1}}^{t_k}\int_{t_{k-1}}^{s}\int_{t_{k-1}}^{u}\left\{\Phi_2^\varepsilon(v)+\varepsilon^2\Phi_1^\varepsilon(v)\right\}\,\df v\,\df u\,\df s\\
&\quad +R_{k-1}(1)\int_{t_{k-1}}^{t_k}\int_{t_{k-1}}^s\int_{t_{k-1}}^u\int_{t_{k-1}}^v\Phi_1^\varepsilon(w)\,\df w\,\df v\,\df u\,\df s\\
&\quad +R_{k-1}\Big(\frac{1}{n^5}\Big)+R_{k-1}\Big(\frac{\varepsilon}{n^4\sqrt{n}}\Big)+R_{k-1}\Big(\frac{\varepsilon^2}{n^4}\Big)\\
&\quad +R_{k-1}\Big(\frac{\varepsilon^3}{n^3\sqrt{n}}\Big)+R_{k-1}\Big(\frac{\varepsilon^4}{n^3}\Big)+R_{k-1}\Big(\frac{\varepsilon^5}{n^2\sqrt{n}}\Big),
\end{align*}
and
\begin{align*}
E\left[P_{k}^{i_{1}}P_{k}^{i_{2}}P_{k}^{i_{3}}P_{k}^{i_{4}}(\theta_{0})\Big|\mathcal{F}_{t_{k-1}}\right]&= E\left[\prod_{j=1}^4\left(\Delta_kX^\varepsilon\right)^{i_j}\Bigg|\mathcal{F}_{t_{k-1}}\right]\\
&\quad -E[(\Delta_kX^\varepsilon)^{i_{1}}(\Delta_kX^\varepsilon)^{i_{2}}(\Delta_kX^\varepsilon)^{i_{4}}|\mathcal{F}_{t_{k-1}}]\frac{1}{n}b_{t_{k-1},H_n}^{i_{3}}\\
&\quad -E[(\Delta_kX^\varepsilon)^{i_{1}}(\Delta_kX^\varepsilon)^{i_{3}}(\Delta_kX^\varepsilon)^{i_{4}}|\mathcal{F}_{t_{k-1}}]\frac{1}{n}b_{t_{k-1},H_n}^{i_{2}}\\
&\quad -E[(\Delta_kX^\varepsilon)^{i_{2}}(\Delta_kX^\varepsilon)^{i_{3}}(\Delta_kX^\varepsilon)^{i_{4}}|\mathcal{F}_{t_{k-1}}]\frac{1}{n}b_{t_{k-1},H_n}^{i_{1}}\\
&\quad +E[(\Delta_kX^\varepsilon)^{i_{1}}(\Delta_kX^\varepsilon)^{i_{4}}|\mathcal{F}_{t_{k-1}}]\frac{1}{n^2}b_{t_{k-1},H_n}^{i_{2}}b_{t_{k-1},H_n}^{i_{3}}\\
&\quad +E[(\Delta_kX^\varepsilon)^{i_{2}}(\Delta_kX^\varepsilon)^{i_{4}}|\mathcal{F}_{t_{k-1}}]\frac{1}{n^2}b_{t_{k-1},H_n}^{i_{1}}b_{t_{k-1},H_n}^{i_{3}}\\
&\quad +E[(\Delta_kX^\varepsilon)^{i_{3}}(\Delta_kX^\varepsilon)^{i_{4}}|\mathcal{F}_{t_{k-1}}]\frac{1}{n^2}b_{t_{k-1},H_n}^{i_{1}}b_{t_{k-1},H_n}^{i_{2}}\\
&\quad -E[(\Delta_kX^\varepsilon)^{i_{4}}|\mathcal{F}_{t_{k-1}}]\frac{1}{n^3}b_{t_{k-1},H_n}^{i_{1}}b_{t_{k-1},H_n}^{i_{2}}b_{t_{k-1},H_n}^{i_3}\\
&\quad -E[P_{k}^{i_{1}}P_{k}^{i_{2}}P_{k}^{i_{3}}(\theta_{0})|\mathcal{F}_{t_{k-1}}]\frac{1}{n}b_{t_{k-1},H_n}^{i_4}\\
&= \frac{\varepsilon^4}{8n^2}\sum_{\tilde{\sigma}\in G(4)}[\sigma\sigma^\top]_{t_{k-1},H}^{i_{\tilde{\sigma}(1)}i_{\tilde{\sigma}(2)}}[\sigma\sigma^\top]_{t_{k-1},H}^{i_{\tilde{\sigma}(3)}i_{\tilde{\sigma}(4)}}\\
&\quad +\frac{1}{n^4}\prod_{j=1}^4\left(b_{t_{k-1},H}^{i_j}-b_{t_{k-1},H_n}^{i_j}\right)\\
&\quad +\frac{\varepsilon^2}{4n^3}\sum_{\tilde{\sigma}\in G(4)}[\sigma\sigma^{\top}]^{i_{\tilde{\sigma}(1)}i_{\tilde{\sigma}(2)}}\left(b_{t_{k-1},H}^{i_{\tilde{\sigma}(3)}}-b_{t_{k-1},H_n}^{i_{\tilde{\sigma}(3)}}\right)\left(b_{t_{k-1},H}^{i_{\tilde{\sigma}(4)}}-b_{t_{k-1},H_n}^{i_{\tilde{\sigma}(4)}}\right)\\
&\quad +\int_{t_{k-1}}^{t_k}\left\{\Phi_4^\varepsilon(s)+\varepsilon^2\Phi_3^\varepsilon(s)\right\}\,\df s\\
&\quad +R_{k-1}\left(\frac{1}{n}\right)\int_{t_{k-1}}^{t_k}\{\Phi_3^\varepsilon(u)+\varepsilon^2\Phi_2^\varepsilon(s)\}\,\df s\\
&\quad +R_{k-1}\left(\frac{1}{n^2}\right)\int_{t_{k-1}}^{t_k}\{\Phi_2^\varepsilon(s)+\varepsilon^2\Phi_1^\varepsilon(s)\}\,\df s\\
&\quad +R_{k-1}(1)\int_{t_{k-1}}^{s}\{\Phi_3^\varepsilon(u)+\varepsilon^2\Phi_2^\varepsilon(u)\}\,\df u\,\df s\\
&\quad +R_{k-1}\left(\frac{1}{n}\right)\int_{t_{k-1}}^{t_k}\int_{t_{k-1}}^{s}\{\Phi_2^\varepsilon(u)+\varepsilon^2\Phi_1^\varepsilon(u)\}\,\df u\,\df s\\
&\quad +R_{k-1}(\varepsilon^2)\int_{t_{k-1}}^{t_k}\int_{t_{k-1}}^{s}\{\Phi_2^\varepsilon(u)+\varepsilon^2\Phi_1^\varepsilon(u)\}\,\df u\,\df s\\
&\quad +R_{k-1}(1)\int_{t_{k-1}}^{t_k}\int_{t_{k-1}}^{s}\int_{t_{k-1}}^{u}\left\{\Phi_2^\varepsilon(v)+\varepsilon^2\Phi_1^\varepsilon(v)\right\}\,\df v\,\df u\,\df s\\
&\quad +R_{k-1}(1)\int_{t_{k-1}}^{t_k}\int_{t_{k-1}}^s\int_{t_{k-1}}^u\int_{t_{k-1}}^v\Phi_1^\varepsilon(w)\,\df w\,\df v\,\df u\,\df s\\
&\quad +R_{k-1}\left(\frac{1}{n^5}\right)+R_{k-1}\left(\frac{\varepsilon}{n^4\sqrt{n}}\right)+R_{k-1}\left(\frac{\varepsilon^2}{n^4}\right)\\
&\quad +R_{k-1}\left(\frac{\varepsilon^3}{n^3\sqrt{n}}\right)+R_{k-1}\left(\frac{\varepsilon^4}{n^3}\right)+R_{k-1}\Big(\frac{\varepsilon^5}{n^2\sqrt{n}}\Big).
\end{align*}
\end{proof}
\end{lemm}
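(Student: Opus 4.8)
The plan is to compute each conditional moment by applying Itô's formula to the product $\prod_{j}(X_t^\varepsilon - X_{t_{k-1}}^\varepsilon)^{i_j}$ over $[t_{k-1},t_k]$, which converts the problem into integrating the Itô generator of \eqref{SFDEs} and estimating the resulting terms, and then to pass from the moments of $\Delta_k X^\varepsilon$ to those of $P_k(\theta_0)$ by a multilinear expansion. I would prove the four parts in order, since by Itô's formula the $m$-fold moment reduces to time-integrals of the $(m-1)$-fold and lower moments through the first-order drift piece of the generator; thus (i) feeds (ii), which feeds (iii), which feeds (iv), exactly matching the dependency visible in the displays.

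For part (i), I start from $(\Delta_k X^\varepsilon)^{i_1} = \int_{t_{k-1}}^{t_k} b_{s,H}^{i_1}\,\df s + \varepsilon\int_{t_{k-1}}^{t_k}\sum_l \sigma_{i_1 l}\,\df W_s^l$; the stochastic integral is an $\mathcal{F}_{t_{k-1}}$-martingale increment, so conditioning annihilates it and leaves $E[(\Delta_k X^\varepsilon)^{i_1}\mid\mathcal{F}_{t_{k-1}}] = E[\int_{t_{k-1}}^{t_k} b_{s,H}^{i_1}\,\df s\mid\mathcal{F}_{t_{k-1}}]$. Freezing the coefficient via $b_{s,H}^{i_1} = b_{t_{k-1},H}^{i_1} + (b_{s,H}^{i_1} - b_{t_{k-1},H}^{i_1})$, the Lipschitz bound in (A\ref{assu1}) controls the difference by $|X_s^\varepsilon - X_{t_{k-1}}^\varepsilon| + |H(X_{s-\cdot}^\varepsilon) - H(X_{t_{k-1}-\cdot}^\varepsilon)|$, whose conditional moments are precisely the content of Lemma \ref{prepare for conditional expectation}; integrating the $p=1$ bound over $[t_{k-1},t_k]$ produces the $\Phi_1^\varepsilon$ integral together with $R_{k-1}(1/n^2)$ and $R_{k-1}(\varepsilon/(n\sqrt n))$ (since $\int_{t_{k-1}}^{t_k}(s-t_{k-1})\,\df s\sim n^{-2}$ and $\int_{t_{k-1}}^{t_k}(s-t_{k-1})^{1/2}\,\df s\sim n^{-3/2}$). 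Subtracting the deterministic term $\tfrac1n b_{t_{k-1},H_n}^{i_1}$ yields the stated formula with the drift difference $b_{t_{k-1},H}^{i_1} - b_{t_{k-1},H_n}^{i_1}$.

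For (ii)--(iv) the engine is Itô's formula applied to the polynomial $\prod_j(x - X_{t_{k-1}}^\varepsilon)^{i_j}$: the generator contributes a first-order piece $\sum_{\tilde\sigma\in G(m)}(\Delta_s X^\varepsilon)^{i_{\tilde\sigma(1)}}\cdots b_{s,H}^{i_{\tilde\sigma(m)}}$ and a second-order piece weighted by $\varepsilon^2[\sigma\sigma^\top]_{s,H}$, each integrated in $s$ and conditioned. I would freeze the coefficients at $t_{k-1}$ as in (i), use the already-established lower-order moment formulas to evaluate the frozen integrals (this is where the explicit $\varepsilon^2/n$, $n^{-m}$, and $G(m)$-permutation main terms arise), and route every coefficient difference through (A\ref{assu1}) and Lemma \ref{prepare for conditional expectation} to generate the nested $\Phi_p^\varepsilon$ integrals and the $R_{k-1}(\cdot)$ remainders. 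The passage from $\Delta_k X^\varepsilon$ to $P_k(\theta_0)$ is then purely algebraic: expand $\prod_j P_k^{i_j} = \prod_j[(\Delta_k X^\varepsilon)^{i_j} - \tfrac1n b_{t_{k-1},H_n}^{i_j}]$, substitute the moment formulas, and recombine the $b_{t_{k-1},H}$ main terms with the subtracted $b_{t_{k-1},H_n}$ terms into the differences $(b_{t_{k-1},H}^{i_j} - b_{t_{k-1},H_n}^{i_j})$ displayed in the statement; for instance in (ii) the cross terms and the $\tfrac1{n^2}b_{t_{k-1},H_n}^{i_1}b_{t_{k-1},H_n}^{i_2}$ correction assemble exactly into $\tfrac1{n^2}(b_{t_{k-1},H}^{i_1}-b_{t_{k-1},H_n}^{i_1})(b_{t_{k-1},H}^{i_2}-b_{t_{k-1},H_n}^{i_2})$.

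The main obstacle is bookkeeping rather than any single hard estimate: across the recursion one must track the precise order in $n$ and $\varepsilon$ of every frozen integral and every coefficient-difference remainder, keep the combinatorial factors $G(m)$ correct, and verify that the delay-discretization mismatch (the $H$ versus $H_n$ distinction) assembles cleanly into the drift differences. The most delicate point is ensuring that when the lower-order $\Delta_k X^\varepsilon$ moments are inserted into the frozen integrals, the accumulated remainders retain exactly the advertised powers $\varepsilon^a n^{-b}$ and do not degrade to lower powers of $n$; this requires pairing each application of Lemma \ref{prepare for conditional expectation} with the correct number of integrations in $s$, so that the $(t-t_{k-1})^p$ and $\varepsilon^p(t-t_{k-1})^{p/2}$ scales integrate up to the claimed orders, and then closing the estimate for the diagonal $|X_s^\varepsilon-X_{t_{k-1}}^\varepsilon|$ contributions by Gronwall's inequality as in Lemma \ref{prepare for conditional expectation}.
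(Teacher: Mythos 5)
Your proposal follows essentially the same route as the paper's proof: It\^{o}'s formula applied to the product $\prod_j(\Delta_s X^\varepsilon)^{i_j}$ yields the $G(m)$-permutation drift terms plus the $\varepsilon^2[\sigma\sigma^\top]$ term, the coefficients are frozen at $t_{k-1}$ with the differences controlled by (A1) and Lemma \ref{prepare for conditional expectation}, the lower-order moment formulas are fed recursively into the frozen integrals, and the passage from $\Delta_k X^\varepsilon$ to $P_k(\theta_0)$ is the same multilinear expansion that recombines the $H$ and $H_n$ terms into the drift differences. The identification of the remainder scales ($R_{k-1}(1/n^2)$ and $R_{k-1}(\varepsilon/(n\sqrt{n}))$ in (i), and their higher-order analogues) also matches the paper's bookkeeping.
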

\noindent
We shall use the notation:
\begin{enumerate}
\renewcommand{\labelenumi}{(N\theenumi)}
\setcounter{enumi}{\value{enumimemory}}
\item We denote the gradient operator of $f(x,y,\theta)$ with respect to $x$ by
\begin{equation*}
\triangledown_xf(x,y,\theta)=\left(\partial_{x_1}f(x,y,\theta),\dots\partial_{x_d}f(x,y,\theta)\right)^{\top}.
\end{equation*}
\setcounter{enumimemory}{\value{enumi}}
\end{enumerate}
In Lemma \ref{convergence depend on n}, the proof ideas follow Long et al. \cite{long}
\begin{lemm} \label{convergence depend on n}
Let $f\in C_{\uparrow}^{1,1,1}(\mathbb{R}^d\times\mathbb{R}^d\times\Theta)$ and suppose the conditions {\upshape(A\ref{assu1})-(A\ref{assu3})}. Then the following (i) and (ii) hold true: 
\begin{enumerate}
\renewcommand{\labelenumi}{\rm{(\roman{enumi})}}
\item As $\varepsilon\to 0$ and $n\rightarrow \infty$, 
\begin{equation*}
\frac{1}{n}\sum_{k=1}^{n}f\left(X_{t_{k-1}}^{\varepsilon},H_n\big(X_{t_{k-1}-\cdot}^{\varepsilon}\big),\theta\right)\xrightarrow{P}\int_{0}^{1}f\left(X_{s}^0,H\big(X_{s-\cdot}^0\big),\theta\right)\,\df s,
\end{equation*}
uniformly in $\theta\in \Theta$. 
\item As $\varepsilon\to 0$ and $n\rightarrow \infty$, 
\begin{equation*}
\sum_{k=1}^{n}f\left(X_{t_{k-1}}^{\varepsilon},H_n\big(X_{t_{k-1}-\cdot}^{\varepsilon}\big),\theta\right)P_k(\theta_0)\xrightarrow{P}0,
\end{equation*}
uniformly in $\theta\in \Theta$. 
\end{enumerate}
\end{lemm}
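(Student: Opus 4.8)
For the proof of Lemma \ref{convergence depend on n}, the plan is to treat the two statements separately, but in both cases first to pass from the discrete sum to an integral of a piecewise-constant process, and then to replace that process by its deterministic limit $X^0$ using the convergence estimates already established. Throughout I use that $f\in C_\uparrow^{1,1,1}$ gives both the growth bound $\sup_\theta|f(x,y,\theta)|\le C(1+|x|+|y|)^\lambda$ and, via the mean value theorem in the first two arguments, the Lipschitz-type bound
$$\sup_{\theta\in\Theta}\bigl|f(x,y,\theta)-f(x',y',\theta)\bigr|\le C\bigl(1+|x|+|x'|+|y|+|y'|\bigr)^\lambda\bigl(|x-x'|+|y-y'|\bigr).$$

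For (i), I would first observe that, with $Y^{n,\varepsilon}_t=X^\varepsilon_{\lfloor nt\rfloor/n}$, one has $Y^{n,\varepsilon}_s=X^\varepsilon_{t_{k-1}}$ and $H_n(Y^{n,\varepsilon}_{s-\cdot})=H_n(X^\varepsilon_{t_{k-1}-\cdot})$ for $s\in[t_{k-1},t_k)$, since $\lfloor n(s-(i-1)/n)\rfloor=\lfloor ns\rfloor-(i-1)$. Hence the Riemann sum equals $\int_0^1 f(Y^{n,\varepsilon}_s,H_n(Y^{n,\varepsilon}_{s-\cdot}),\theta)\,\df s$, and it suffices to compare this with $\int_0^1 f(X^0_s,H(X^0_{s-\cdot}),\theta)\,\df s$. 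Applying the displayed bound pointwise in $s$, taking $\sup_\theta$ inside the integral, then using Cauchy--Schwarz to split off the polynomial-growth factor, the uniform moment bound (Lemma \ref{having solution} and the Remark after Lemma \ref{X_t go to ODE}) controls that factor, while Lemma \ref{X_t go to ODE} and Lemma \ref{H(Y) to H(x) and H_n(Y) to H(x)}(ii) make $|Y^{n,\varepsilon}_s-X^0_s|+|H_n(Y^{n,\varepsilon}_{s-\cdot})-H(X^0_{s-\cdot})|$ of order $O(\varepsilon)+O(1/n)$ in $L^2$. This gives $L^1$-convergence of $\sup_\theta|\cdot|$, hence uniform convergence in probability.

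For (ii), write $f_{k-1}(\theta):=f(X^\varepsilon_{t_{k-1}},H_n(X^\varepsilon_{t_{k-1}-\cdot}),\theta)$, which is $\mathcal F_{t_{k-1}}$-measurable, and decompose componentwise
$$\sum_{k=1}^n f_{k-1}(\theta)P_k^i(\theta_0)=\underbrace{\sum_{k=1}^n f_{k-1}(\theta)\bigl(P_k^i(\theta_0)-E[P_k^i(\theta_0)\mid\mathcal F_{t_{k-1}}]\bigr)}_{=:M_n^i(\theta)}+\underbrace{\sum_{k=1}^n f_{k-1}(\theta)\,E[P_k^i(\theta_0)\mid\mathcal F_{t_{k-1}}]}_{=:A_n^i(\theta)}.$$
Here $M_n^i(\theta)$ is a sum of martingale differences, so orthogonality yields $E[M_n^i(\theta)^2]=\sum_k E\bigl[f_{k-1}(\theta)^2\,\mathrm{Var}(P_k^i(\theta_0)\mid\mathcal F_{t_{k-1}})\bigr]$; by Lemma \ref{conditional expectation}(ii) the conditional variance is $\varepsilon^2 n^{-1}[\sigma\sigma^\top]^{ii}_{t_{k-1},H}$ plus strictly smaller terms, so summation gives $O(\varepsilon^2)$ and $M_n^i(\theta)\to0$ in $L^2$. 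For the compensator $A_n^i(\theta)$ I insert the expansion of Lemma \ref{conditional expectation}(i): the leading piece $n^{-1}(b^i_{t_{k-1},H}-b^i_{t_{k-1},H_n})$ is $O(n^{-2})+O(\varepsilon n^{-3/2})$ in $L^p$ by (A\ref{assu1}) and Lemma \ref{order of SFDEs}, and together with the remainders $R_{k-1}(n^{-2})$, $R_{k-1}(\varepsilon/(n\sqrt n))$ and the $\Phi_1^\varepsilon$-integrals, multiplication by the moment-bounded $f_{k-1}(\theta)$ and summation over $k$ produces $O(n^{-1})+O(\varepsilon/\sqrt n)\to0$ in $L^1$.

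Finally, uniformity in $\theta$ for (ii) needs care only for $M_n^i$, since $A_n^i$ was controlled by an absolute estimate into which $\sup_\theta|f_{k-1}(\theta)|$ (still moment-bounded by the growth bound) may be inserted directly. For $M_n^i$ I would invoke a Sobolev embedding: as $\Theta$ is a bounded subset of $\mathbb R^{p+q}$ and $\partial_\theta f\in C_\uparrow$, differentiating under the sum shows $\partial_\theta M_n^i(\theta)=\sum_k\partial_\theta f_{k-1}(\theta)\bigl(P_k^i(\theta_0)-E[P_k^i(\theta_0)\mid\mathcal F_{t_{k-1}}]\bigr)$ is again a martingale with the same structure, so for $p>\dim\Theta$ one gets $E[\sup_\theta|M_n^i(\theta)|^p]\le C\int_\Theta\bigl(E[|M_n^i(\theta)|^p]+E[|\partial_\theta M_n^i(\theta)|^p]\bigr)\,\df\theta\to0$. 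The main obstacle I anticipate is the bookkeeping for $A_n^i$: one must verify that each remainder and each $\Phi_1^\varepsilon$-term from Lemma \ref{conditional expectation}(i)---in particular the initial-segment contributions, which are nonzero only for the boundary indices with $t_{k-1}<\delta$---sums to a quantity tending to $0$ under (A\ref{assu2}); securing the $L^p$ martingale bounds required for the Sobolev step is the other delicate point.
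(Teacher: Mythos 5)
Your proposal is correct, and for part (i) it is essentially the paper's argument (rewrite the Riemann sum as $\int_0^1 f(Y_s^{n,\varepsilon},H_n(Y_{s-\cdot}^{n,\varepsilon}),\theta)\,\df s$, apply the mean-value bound coming from $C_\uparrow^{1,1,1}$, and conclude via Lemma \ref{X_t go to ODE} and Lemma \ref{H(Y) to H(x) and H_n(Y) to H(x)}; the paper does not even need your Cauchy--Schwarz step, since it bounds the error by a tight random factor times a quantity tending to $0$ in probability). For part (ii), however, you take a genuinely different route. The paper substitutes the SDE into $P_k(\theta_0)$ pathwise, splitting the sum into a Lebesgue integral of the drift increment $b(X_s^\varepsilon,H(X_{s-\cdot}^\varepsilon),\theta_0)-b(Y_s^{n,\varepsilon},H_n(Y_{s-\cdot}^{n,\varepsilon}),\theta_0)$ --- killed by the Lipschitz condition (A\ref{assu1}) together with Lemmas \ref{X_t go to ODE} and \ref{H(Y) to H(x) and H_n(Y) to H(x)} --- plus $\varepsilon$ times a stochastic integral, which it handles by localization with the stopping times $\tau_m^{n,\varepsilon}$, It\^o isometry for pointwise convergence, and the Ibragimov--Has'minskii tightness criterion (moment bounds of order $2l>p+q$ via Burkholder--Davis--Gundy) for uniformity. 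You instead perform a Doob-type decomposition into a martingale part and a compensator, control the martingale through its predictable quadratic variation using Lemma \ref{conditional expectation}(ii), the compensator through Lemma \ref{conditional expectation}(i) together with Lemma \ref{order of SFDEs} and (A\ref{assu2}), and restore uniformity in $\theta$ by a Sobolev-embedding argument; this is legitimate since Lemma \ref{conditional expectation} precedes the present lemma, and it is in fact the same mechanism the paper later uses for Lemma \ref{convergence of diffusion}. The paper's route buys a cleaner separation of the deterministic and stochastic error sources and avoids the bookkeeping of the $\Phi_j^\varepsilon$ and $R_{k-1}$ remainders; your route is more systematic and reuses the conditional-moment machinery, at the cost of the two delicate points you correctly identify: the $L^{2l}$ (not merely $L^2$) martingale bounds needed for the Sobolev step, which do follow from Burkholder--Davis--Gundy and $E[|P_k^i(\theta_0)|^{2l}]=O(\varepsilon^{2l}n^{-l})+O(n^{-2l})$, and the verification that the boundary-index contributions of $\Phi_1^\varepsilon$ sum to $o(1)$ under (A\ref{assu2}), which holds because $\sum_k\mu([t_{k-1},t_k])\le\mu([0,\delta])<\infty$.
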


\begin{proof}[\bf Proof]
(i) From lemma \ref{X_t go to ODE}, Lemma \ref{H(Y) to H(x) and H_n(Y) to H(x)} and Taylor's formula, we find that
\begin{align*}
&\sup_{\theta\in\Theta}\left|\frac{1}{n}\sum_{k=1}^{n}f\left(X_{t_{k-1}}^\varepsilon,H_n\big(X_{t_{k-1}-\cdot}^\varepsilon\big),\theta\right)-\int_{0}^{1}f\left(X_{s}^0,H\big(X_{s-\cdot}^0\big),\theta\right)\,\df s\right|\\
=&\sup_{\theta\in\Theta}\left|\int_{0}^{1}\left\{f\left(Y_{s}^{n,\varepsilon},H_n\big(Y_{s-\cdot}^{n,\varepsilon}\big),\theta\right)-f\left(X_{s}^0,H\big(X_{s-\cdot}^0\big),\theta\right)\right\}\,\df s\right|\\
\leq &\sup_{\theta\in\Theta}\int_0^1\int_0^1\Big|(\triangledown_xf)^\top\left(X_s^0+u\left(Y_s^{n,\varepsilon}-X_s^0\right),H\big(X_{s-\cdot}^0\big)+u\left(H_n\big(Y_{s-\cdot}^{n,\varepsilon}\big)-H\big(X_{s-\cdot}^0\big)\right),\theta\right)\\
&\quad \cdot\left(Y_s^{n,\varepsilon}-X_s^0\right)+(\triangledown_yf)^\top\left(X_s^0+u\left(Y_s^{n,\varepsilon}-X_s^0\right),H\big(X_{s-\cdot}^0\big)+u\left(H_n\big(Y_{s-\cdot}^{n,\varepsilon}\big)-H\big(X_{s-\cdot}^0\big)\right),\theta\right)\\
&\quad \cdot\left(H_n\big(Y_{s-\cdot}^{n,\varepsilon}\big)-H\big(X_{s-\cdot}^0\big)\right)\Big|\,\df u\,\df s\\
\leq &\int_0^1C\left\{1+\left|X_s^0\right|+\left|Y_s^{n,\varepsilon}\right|+\mu\left(\left[0,\delta\right]\right)\left(\left\|X_{s-\cdot}^0\right\|_\infty+\left\|Y_{s-\cdot}^{n,\varepsilon}\right\|_\infty\right)\right\}^\lambda\Big(\left|Y_s^{n,\varepsilon}-X_s^0\right|\\
&\quad +\left|H_n\big(Y_{s-\cdot}^{n,\varepsilon}\big)-H\big(Y_{s-\cdot}^{n,\varepsilon}\big)\right|+\mu\left(\left[0,\delta\right]\right)\left\|Y_{s-\cdot}^{n,\varepsilon}-X_{s-\cdot}^0\right\|_\infty\Big)\,\df s\\
\leq &~C\Big(1+\big(1+\mu([0,\delta])\big)\big(\sup_{-\delta\leq s\leq1}|X_{s}^0|+\sup_{-\delta\leq s\leq 1}|Y_{s}^{n,\varepsilon}|\big)\Big)^\lambda\Big\{\sup_{0\leq s\leq1}|Y_s^{n,\varepsilon}-X_s^0|\\
&\quad +\sup_{0\leq s\leq1}|H_n(Y_{s-\cdot}^{n,\varepsilon})-H(Y_{s-\cdot}^{n,\varepsilon})|+\mu([0,\delta])\sup_{0\leq s\leq1}\|Y_{s-\cdot}^{n,\varepsilon}-X_{s-\cdot}^0\|_\infty\Big\}\\
\xrightarrow{P} &0,
\end{align*}
as $\varepsilon\to 0$ and $n\rightarrow \infty.$\\
(ii) It is easy to see that
\begin{align*}
&\sum_{k=1}^{n}f(X_{t_{k-1}}^\varepsilon,H_n(X_{t_{k-1}-\cdot}^\varepsilon),\theta)P_k^i(\theta_0)\\
=&\sum_{k=1}^{n}f(X_{t_{k-1}}^\varepsilon,H_n(X_{t_{k-1}-\cdot}^\varepsilon),\theta)\int_{t_{k-1}}^{t_k}\Big(b^i\big(X_{s}^\varepsilon,H(X_{s-\cdot}^\varepsilon),\theta_0\big)-b^i\big(X_{t_{k-1}}^\varepsilon,H_n(X_{t_{k-1}-\cdot}^\varepsilon),\theta_0\big)\Big)\,\df s\\
&+\varepsilon\sum_{k=1}^{n}f(X_{t_{k-1}}^\varepsilon,H_n(X_{t_{k-1}-\cdot}^\varepsilon),\theta) \int_{t_{k-1}}^{t_k}\sum_{j=1}^{r}\sigma^{ij}\big(X_{s}^\varepsilon,H(X_{s-\cdot}^\varepsilon),\beta_0\big)\,\df W_s^j\\
=&\int_{0}^{1}f(Y_{s}^{n,\varepsilon},H_n(Y_{s-\cdot}^{n,\varepsilon}),\theta)\Big(b\big(X_{s}^\varepsilon,H(X_{s-\cdot}^\varepsilon),\theta_0\big)-b\big(Y_{s}^{n,\varepsilon},H_n(Y_{s-\cdot}^{n,\varepsilon}),\theta_0\big)\Big)\,\df s\\
&+\varepsilon\int_{0}^{1}\sum_{j=1}^{r}f(Y_{s}^{n,\varepsilon},H(Y_{s-\cdot}^{n,\varepsilon}),\theta) \sigma^{ij}\big(X_{s}^\varepsilon,H(X_{s-\cdot}^\varepsilon),\beta_0\big)\,\df W_s^j.
\end{align*}
From the Lipschitz condition on $b$ in (A\ref{assu1}) it holds that
\begin{align*}
&\sup_{\theta\in\Theta}\bigg|\int_{0}^{1}f(Y_{s}^{n,\varepsilon},H_n(Y_{s-\cdot}^{n,\varepsilon}),\theta)\Big(b\big(X_{s}^\varepsilon,H(X_{s-\cdot}^\varepsilon),\theta_0\big)-b\big(Y_{s}^{n,\varepsilon},H_n(Y_{s-\cdot}^{n,\varepsilon}),\theta_0\big)\Big)\,\df s\bigg|\\
\leq &\int_{0}^{1}\sup_{\theta\in\Theta}\Big|f(Y_{s}^{n,\varepsilon},H_n(Y_{s-\cdot}^{n,\varepsilon}),\theta)\Big|\cdot K\Big(\big|X_{s}^\varepsilon-Y_s^{n,\varepsilon}\big|\\
&\quad +\big|H_n(Y_{s-\cdot}^{n,\varepsilon})-H(Y_{s-\cdot}^{n,\varepsilon})\big|+\mu([0,\delta])\big\|X_{s-\cdot}^\varepsilon-Y_{s-\cdot}^{n,\varepsilon}\big\|_\infty \Big)\,\df s\\
\leq &KC\int_{0}^{1}\big(1+|Y_{s}^{n,\varepsilon}|+\mu([0,\delta])\|Y_{s-\cdot}^{n,\varepsilon}\|_\infty\big)^\lambda\Big(\big|X_{s}^\varepsilon-X_s^0\big|+\big|Y_s^{n,\varepsilon}-X_s^0\big|\\
&\quad +\big|H_n(Y_{s-\cdot}^{n,\varepsilon})-H(Y_{s-\cdot}^{n,\varepsilon})\big|+\mu([0,\delta])\big\|X_{s-\cdot}^\varepsilon-X_{s-\cdot}^0\big\|+\mu([0,\delta])\big\|Y_{s-\cdot}^{n,\varepsilon}-X_{s-\cdot}^0\big\|\Big)\,\df s\\
\leq &KC\Big(1+\left(1+\mu([0,\delta])\right)\sup_{-\delta \leq s \leq 1}\big|X_{s}^\varepsilon\big|\Big)^\lambda\Big\{\sup_{0 \leq s \leq 1}\big|H_n(Y_{s-\cdot}^{n,\varepsilon})-H(Y_{s-\cdot}^{n,\varepsilon})\big|\\
&\quad +\left(1+\mu([0,\delta])\right)(\sup_{-\delta \leq s \leq 1}\big|X_{s}^\varepsilon-X_s^0\big|+\sup_{-\delta \leq s \leq 1}\big|Y_s^{n,\varepsilon}-X_s^0\big|)\Big\},
\end{align*}
which converges to zero as $\varepsilon\to 0$ and $n\to\infty$ by Lemma \ref{X_t go to ODE} and Lemma \ref{H(Y) to H(x) and H_n(Y) to H(x)}. Let $\tau_m^{n,\varepsilon}=\inf\{t\geq0;|X_t^\varepsilon|\geq m$ or $|Y_t^{n,\varepsilon}|\geq m\}$. We find that $\tau_m^{n,\varepsilon}\to\infty$~ a.s.  as $m\to \infty$ from Lemma \ref{having solution}. Next, we have
\begin{align}
&P\Bigg(\varepsilon\sup_{\theta\in\Theta}\bigg|\int_{0}^{1}\sum_{j=1}^{r}f(Y_{s}^{n,\varepsilon},H_n(Y_{s-\cdot}^{n,\varepsilon}),\theta) \sigma^{ij}\big(X_{s}^\varepsilon,H(X_{s-\cdot}^\varepsilon),\beta_0\big)\,\df W_s^j\bigg|\Bigg)\notag\\
\leq &P(\tau_{m}^{n,\varepsilon}<1)+P\Bigg(\varepsilon\sup_{\theta\in\Theta}\bigg|\int_{0}^{1}\sum_{j=1}^{r}f(Y_{s}^{n,\varepsilon},H_n(Y_{s-\cdot}^{n,\varepsilon}),\theta) \sigma^{ij}\big(X_{s}^\varepsilon,H(X_{s-\cdot}^\varepsilon),\beta_0\big)\1_{\{s\leq\tau_m^{n,\varepsilon}\}}\,\df W_s^j\bigg|\Bigg).\label{probability  convergence}
\end{align}
Let
\begin{equation*}
u_{n,\varepsilon}^i(\theta)=\varepsilon\int_{0}^{1}\sum_{j=1}^{r}f(Y_{s}^{n,\varepsilon},H_n(Y_{s-\cdot}^{n,\varepsilon}),\theta) \sigma^{ij}\big(X_{s}^\varepsilon,H(X_{s-\cdot}^\varepsilon),\beta_0\big)\1_{\{s\leq\tau_m^{n,\varepsilon}\}}\,\df W_s^j.
\end{equation*}
We want to prove that $u_{n,\varepsilon}^i(\theta)\to0$ in $P$ as $\varepsilon\to 0$ and $n\to\infty$, uniformly in $\theta\in\Theta$. Therefore, it is sufficient to check the pointwise convergence and the tightness of the sequence $\{u_{n,\varepsilon}^i(\cdot)\}$. For the pointwise convergence, by the Chebyshev's inequality, the linear growth condition on $\sigma$ in (A\ref{assu1}) and it\^{o}'s isometry,
\begin{align}
P\left(|u_{n,\varepsilon}^i(\theta)|>\eta\right)&\leq \varepsilon^2\eta^{-2}E\Bigg[\bigg|\int_{0}^{1}\sum_{j=1}^{r}f(Y_{s}^{n,\varepsilon},H_n(Y_{s-\cdot}^{n,\varepsilon}),\theta) \sigma^{ij}\big(X_{s}^\varepsilon,H(X_{s-\cdot}^\varepsilon),\beta_0\big)\1_{\{s\leq\tau_m^{n,\varepsilon}\}}\,\df W_s^j\bigg|^2\Bigg]\notag\\
&\leq \varepsilon^2\eta^{-2}\sum_{j=1}^{r}\int_{0}^{1}E\bigg[\Big|f(Y_{s}^{n,\varepsilon},H_n(Y_{s-\cdot}^{n,\varepsilon}),\theta) \sigma^{ij}\big(X_{s}^\varepsilon,H(X_{s-\cdot}^\varepsilon),\beta_0\big)\Big|^2\1_{\{s\leq\tau_m^{n,\varepsilon}\}}\bigg]\,\df s\notag\\
&\leq \varepsilon^2\eta^{-2}C^2K^2r\Big(1+\big(1+\mu([0,\delta])\big)m\Big)^{2(\lambda+1)}.\label{pointwise convergence}
\end{align}
which converges to zero as $\varepsilon\to 0$ and $n\to\infty$ with fixed $m$. For the tightness, by using Theorem 20 in Appendix\Rnum{1} of Ibragimov and Has'minskii \cite{Ibra}, it is adequate to prove the following two inequalities:
\begin{align}
&E\left[\left|u_{n,\varepsilon}^{i}(\theta)\right|^{2l}\right]\leq C,\label{check1}\\
&E\left[\left|u_{n,\varepsilon}^{i}(\theta_2)-u_{n,\varepsilon}^{i}(\theta_1)\right|^{2l}\right]\leq C\left|\theta_2-\theta_1\right|^{2l},\label{check2}
\end{align}
for $\theta,\theta_1,\theta_2\in\Theta$, where $2l>p+q$. The proof of \eqref{check1} is analogous to moment estimates in \eqref{pointwise convergence} by replacing it\^{o}'s isometry with the Burkholder-Davis-Gundy inequality, so we omit the detail here. For \eqref{check2}, by using Taylor's formula and the Burkholder-Davis-Gundy inequality, we get
\begin{align*}
E\left[\left|u_{n,\varepsilon}^{i}(\theta_2)-u_{n,\varepsilon}^{i}(\theta_1)\right|^{2l}\right]&\leq \varepsilon^{2l}C_lE\Bigg[\bigg(\int_{0}^{1}\sum_{j=1}^{r}\Big|\sigma^{ij}\big(X_{s}^\varepsilon,H(X_{s-\cdot}^\varepsilon),\beta_0\big)\Big|^2 \1_{\{s\leq\tau_m^{n,\varepsilon}\}}\times\\
&\quad \Big|\big(f(Y_{s}^{n,\varepsilon},H_n(Y_{s-\cdot}^{n,\varepsilon}),\theta_2)-f(Y_{s}^{n,\varepsilon},H_n(Y_{s-\cdot}^{n,\varepsilon}),\theta_1)\big)\Big|^2\,\df s\bigg)^l\Bigg]\\
&\leq \varepsilon^{2l}C_lE\Bigg[\bigg(\int_{0}^{1}\int_0^1\sum_{j=1}^{r}\Big|\sigma^{ij}\big(X_{s}^\varepsilon,H(X_{s-\cdot}^\varepsilon),\beta_0\big)\Big|^2 \1_{\{s\leq\tau_m^{n,\varepsilon}\}}\times\\
&\quad \Big|\theta_2-\theta_1\Big|^2\Big|\triangledown_{\theta}f\left(Y_{s}^{n,\varepsilon},H_n(Y_{s-\cdot}^{n,\varepsilon}),\theta_1+v(\theta_2-\theta_1)\right)\Big|^2\,\df vds\bigg)^l\Bigg]\\
&\leq \varepsilon^{2l}C_lC^{2l}K^{2l}r^{l}\Big(1+\big(1+\mu([0,\delta])\big)m\Big)^{(2\lambda+2) l}\big|\theta_2-\theta_1\big|^{2l}.
\end{align*}
Combining \eqref{probability convergence} and arguments above, we have that 
\begin{equation*}
\varepsilon\sup_{\theta\in\Theta}\bigg|\int_{0}^{1}\sum_{j=1}^{r}f(Y_{s}^{n,\varepsilon},H_n(Y_{s-\cdot}^{n,\varepsilon}),\theta) \sigma^{ij}\big(X_{s}^\varepsilon,H(X_{s-\cdot}^\varepsilon,\beta_0\big)\,\df W_s^j\bigg|,
\end{equation*}
converges to zero in probability as $\varepsilon\to 0$ and $n\to\infty$. Therefore, the proof is complete.
\end{proof}
In Lemma \ref{convergence of diffusion}, the proof ideas follow S\o rensen and Uchida \cite{sorensen}.
\begin{lemm} \label{convergence of diffusion}
Let $f\in C_{\uparrow}^{1,1,1}(\mathbb{R}^d\times\mathbb{R}^d\times\Theta)$ and suppose the conditions {\upshape(A\ref{assu1})-(A\ref{assu3})}. If $(\sqrt{n}\varepsilon)^{-1}\to0$ as $\varepsilon\to0$ and $n\to\infty$, then the following (i) and (ii) hold true:
\begin{enumerate}
\renewcommand{\labelenumi}{\rm{(\roman{enumi})}}
\item As $\varepsilon\to 0$ and $n\rightarrow \infty$, 
\begin{align*}
\varepsilon^{-2} \sum_{k=1}^{n} f\left(X_{t_{k-1}}^\varepsilon, H_n\big(X_{t_{k-1}-\cdot}^\varepsilon\big), \theta\right) P_{k}^{i}(\theta_{0}) P_{k}^{j}(\theta_{0}) \xrightarrow{P}& \int_{0}^{1} f\left(X_{s}^0, H\big(X_{s - \cdot}^0\big), \theta\right) \left[\sigma \sigma^{\top}\right]^{ij}\left(X_{s}^0, H\big(X_{s - \cdot}^0\big),\beta_{0}\right)\,\df s,
\end{align*}
uniformly in $\theta\in \Theta.$\\
\item As $\varepsilon\to 0$ and $n\rightarrow \infty$, 
\begin{align*}
&\varepsilon^{-2} \sum_{k=1}^{n} f\left(X_{t_{k-1}}^\varepsilon, H_n\big(X_{t_{k-1} - \cdot}^\varepsilon\big), \theta\right) P_{k}^{i}(\theta) P_{k}^{j}(\theta)\\ \xrightarrow{P}& \int_{0}^{1} f\left(X_{s}^0, H\big(X_{s - \cdot}^0\big), \theta\right) \left[\sigma \sigma^{\top}\right]^{ij}\left(X_{s}^0, H\big(X_{s - \cdot}^0\big),\beta_{0}\right)\,\df s,
\end{align*}
uniformly in $\theta\in \Theta.$\\
\end{enumerate}
\end{lemm}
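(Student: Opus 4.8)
The plan is to establish (i) first and then obtain (ii) by perturbation. For (i), set $\zeta_k(\theta):=\varepsilon^{-2}f(X_{t_{k-1}}^\varepsilon,H_n(X_{t_{k-1}-\cdot}^\varepsilon),\theta)P_k^i(\theta_0)P_k^j(\theta_0)$ and decompose $\sum_{k=1}^n\zeta_k(\theta)=\sum_{k=1}^nE[\zeta_k(\theta)\mid\mathcal{F}_{t_{k-1}}]+\sum_{k=1}^n\big(\zeta_k(\theta)-E[\zeta_k(\theta)\mid\mathcal{F}_{t_{k-1}}]\big)$. Following the strategy of S\o rensen and Uchida \cite{sorensen}, it suffices to show that the compensator (the first sum) converges in probability to the target integral and that $\sum_{k=1}^nE[\zeta_k(\theta)^2\mid\mathcal{F}_{t_{k-1}}]\xrightarrow{P}0$; the standard criterion for conditionally centred triangular arrays then forces the second sum to vanish in probability.

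For the compensator I would insert the expansion of Lemma \ref{conditional expectation}(\ref{2dim}). After multiplication by $\varepsilon^{-2}$ the leading term becomes $\frac1n[\sigma\sigma^\top]_{t_{k-1},H}^{ij}$, so that $\sum_{k=1}^nE[\zeta_k(\theta)\mid\mathcal{F}_{t_{k-1}}]$ has principal part $\frac1n\sum_{k=1}^nf(X_{t_{k-1}}^\varepsilon,H_n,\theta)[\sigma\sigma^\top]^{ij}(X_{t_{k-1}}^\varepsilon,H,\beta_0)$; replacing the inner argument $H$ by $H_n$ (the error being $O(n^{-1})+O(\varepsilon n^{-1/2})$ in mean by the polynomial growth of $[\sigma\sigma^\top]$ and Lemma \ref{order of SFDEs}) and applying Lemma \ref{convergence depend on n}(i) to the map $(x,y,\theta)\mapsto f(x,y,\theta)[\sigma\sigma^\top]^{ij}(x,y,\beta_0)\in C_\uparrow^{1,1,1}$ yields exactly the stated integral. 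Every remaining term of the expansion is negligible under $(\sqrt n\varepsilon)^{-1}\to0$: the quadratic drift term is $\varepsilon^{-2}n^{-2}\sum_k(b_{t_{k-1},H}^{i}-b_{t_{k-1},H_n}^{i})(b_{t_{k-1},H}^{j}-b_{t_{k-1},H_n}^{j})$, whose expectation is $O(\varepsilon^{-2}n^{-3})+O(n^{-2})$ by (A\ref{assu1}) and Lemma \ref{order of SFDEs}, while the $\int_{t_{k-1}}^{t_k}\{\Phi_2^\varepsilon+\varepsilon^2\Phi_1^\varepsilon\}\,\df s$ terms are controlled through (A\ref{assu2}), which bounds $E[\Phi_p^\varepsilon(s)]$ by $O(n^{-p})+O(\varepsilon^pn^{-p/2})$ on each mesh interval; both tend to $0$ after the factor $\varepsilon^{-2}$ and summation. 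For the conditional second moments I would bound $\sum_kE[\zeta_k(\theta)^2\mid\mathcal{F}_{t_{k-1}}]=\varepsilon^{-4}\sum_kf^2E[(P_k^i)^2(P_k^j)^2\mid\mathcal{F}_{t_{k-1}}]$ by Lemma \ref{conditional expectation}(\ref{4dim}), whose leading contribution $\varepsilon^{-4}\cdot\varepsilon^4n^{-2}(\cdots)$ is $O(n^{-1})$ after summation, the remaining terms being of smaller order.

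To upgrade pointwise to uniform convergence in $\theta$ I would proceed exactly as in Lemma \ref{convergence depend on n}: because $\nabla_\theta f$ obeys the polynomial bound of $C_\uparrow^{1,1,1}$, one obtains an increment estimate of the form $E\big[|\sum_k\zeta_k(\theta_2)-\sum_k\zeta_k(\theta_1)|^{2l}\big]\le C|\theta_2-\theta_1|^{2l}$ for $2l>p+q$, which together with the established finite-dimensional convergence and Theorem 20 in Appendix I of Ibragimov and Has'minskii \cite{Ibra} gives tightness in $C(\Theta)$, and hence uniformity.

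Finally, for (ii) I would write $P_k(\theta)=P_k(\theta_0)+\tfrac1n\big(b(X_{t_{k-1}}^\varepsilon,H_n,\theta_0)-b(X_{t_{k-1}}^\varepsilon,H_n,\theta)\big)=:P_k(\theta_0)+\tfrac1nB_{n,k}(\theta)$ and expand $P_k^i(\theta)P_k^j(\theta)$. The term $P_k^i(\theta_0)P_k^j(\theta_0)$ reproduces (i); the cross terms $\varepsilon^{-2}n^{-1}\sum_kf B_{n,k}^{j}P_k^i(\theta_0)$ (and the symmetric one) I would bound directly in the spirit of the proof of Lemma \ref{convergence depend on n}(ii): splitting $P_k^i(\theta_0)$ into its drift and It\^o parts, the martingale part has variance $O((\varepsilon n)^{-2})$, while the drift part is $O((\sqrt n\varepsilon)^{-1}n^{-1})+O((\varepsilon n)^{-2})$ by (A\ref{assu1}) and Lemmas \ref{X_t go to ODE}--\ref{order of SFDEs}; the pure-drift term $\varepsilon^{-2}n^{-2}\sum_kf B_{n,k}^{i}B_{n,k}^{j}$ is $O((\sqrt n\varepsilon)^{-2})$. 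All vanish under $(\sqrt n\varepsilon)^{-1}\to0$, so (ii) shares the limit of (i). The hard part is the uniform control and mutual cancellation of the many remainder terms produced by Lemma \ref{conditional expectation} once they are amplified by $\varepsilon^{-2}$—in particular the initial-path contributions $\int\Phi_p^\varepsilon\,\df s$ and the delay-discretisation differences $b_{t_{k-1},H}-b_{t_{k-1},H_n}$—which is precisely where the balance condition $(\sqrt n\varepsilon)^{-1}\to0$ is needed.
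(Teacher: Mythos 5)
Your proposal is correct and follows essentially the same route as the paper: the compensator/conditional-variance decomposition combined with the triangular-array criterion (Lemma 9 of Genon-Catalot and Jacod) applied to the expansion of Lemma \ref{conditional expectation} for part (i), and the identical algebraic expansion $P_k^iP_k^j(\theta)=P_k^iP_k^j(\theta_0)+\tfrac1n P_k^i(\theta_0)B^j_{k-1}+\tfrac1n P_k^j(\theta_0)B^i_{k-1}+\tfrac1{n^2}B^i_{k-1}B^j_{k-1}$ for part (ii). The only cosmetic difference is in the uniformity step, where you use the increment-moment criterion of Ibragimov and Has'minskii while the paper bounds $\sup_{n,\varepsilon}E[\sup_\theta|\varepsilon^{-2}\sum_k\partial_\theta f\cdot P_k^iP_k^j(\theta_0)|]$ directly; both are standard and valid.
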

\begin{proof}[\bf Proof]
(i) It holds from Lemma \ref{conditional expectation}, Lemma \ref{convergence depend on n}(i), and the H\"{o}lder's inequality that
\begin{align*}
\sum_{k=1}^{n}E&\left[\varepsilon^{-2} f\left(X_{t_{k-1}}, H_n\big(X_{t_{k-1}-\cdot}\big), \theta\right) P_{k}^{i}(\theta_{0}) P_{k}^{j}(\theta_{0})\Big|\mathcal{F}_{t_{k-1}}\right]\\
\xrightarrow{P}& \int_{0}^{1} f\left(X_{s}^0, H\big(X_{s - \cdot}^0\big), \theta\right) \left[\sigma \sigma^{\top}\right]^{ij}\left(X_{s}^0, H\big(X_{s - \cdot}^0\big), \beta_{0}\right)\,\df s,
\end{align*}
\begin{equation*}
\sum_{k=1}^{n}E\left[\varepsilon^{-4} f\left(X_{t_{k-1}}, H_n\big(X_{t_{k-1}-\cdot}\big), \theta\right)^2 \left(P_{k}^{i}(\theta_{0}) P_{k}^{j}(\theta_{0})\right)^2\bigg|\mathcal{F}_{t_{k-1}}\right]\xrightarrow{P} 0,
\end{equation*}
as $n \to \infty$ and $\varepsilon\to0$. Therefore, it follows from Lemma 9 in Genon-Catalot and Jacod \cite{genon-catalot1993} 
\begin{align*}
&\varepsilon^{-2} \sum_{k=1}^{n} f\left(X_{t_{k-1}}, H_n(X_{t_{k-1}-\cdot}), \theta\right) P_{k}^{i}(\theta_{0}) P_{k}^{j}(\theta_{0})\\ \xrightarrow{P}& \int_{0}^{1} f\left(X_{s}^0, H(X_{s - \cdot}^0), \theta\right) [\sigma \sigma^{\top}]^{ij}\left(X_{s}^0, H(X_{s - \cdot}^0),\beta_{0}\right)\,\df s, 
\end{align*}
as $n \to \infty$ and $\varepsilon\to0$. 
For the tightness of the sequence $\{\varepsilon^{-2}\sum_{k=1}^{n}f\left(X_{t_{k-1}},H_n(X_{t_{k-1}}),\cdot\right)P_k^iP_k^j(\theta_0)\}$, according to Lemma $\ref{conditional expectation}$,
\begin{align*}
&\sup_{n,\varepsilon}E\Bigg[\sup_{\theta}\bigg|\varepsilon^{-2} \sum_{k=1}^n\frac{\partial}{\partial \theta}f\left(X_{t_{k-1}},H_n(X_{t_{k-1}}),\theta\right) P_{k}^{i}(\theta_{0}) P_{k}^{j}(\theta_{0})\bigg|\Bigg]\\
\leq&\sup_{n,\varepsilon}E\Bigg[\frac{\varepsilon^{-2}}{2} \sum_{k=1}^n\sup_{\theta}\bigg|\frac{\partial}{\partial \theta}f\left(X_{t_{k-1}},H_n(X_{t_{k-1}}),\theta\right)\bigg| E\Big[\big(P_{k}^{i}(\theta_{0})\big)^2+ \big(P_{k}^{j}(\theta_{0})\big)^2\Big|\mathcal{F}_{t_{k-1}}\Big]\Bigg]\\
\leq&\frac{1}{2}\sup_{n,\varepsilon}E\Bigg[\sum_{k=1}^n\sup_{\theta}\bigg|\frac{\partial}{\partial \theta}f\left(X_{t_{k-1}},H_n(X_{t_{k-1}}),\theta\right)\bigg|\times\\
&\quad \bigg\{\frac{1}{n}([\sigma\sigma^{\top}]_{t_{k-1},H}^{ii}+[\sigma\sigma^{\top}]_{t_{k-1},H}^{jj})+\frac{1}{n^2\varepsilon^2}\left(b_{t_{k-1},H}^{i_1}-b_{t_{k-1},H_n}^{i_1}\right)\left(b_{t_{k-1},H}^{i_2}-b_{t_{k-1},H_n}^{i_2}\right)\\
&\qquad +\int_{t_{k-1}}^{t_k}\left\{4\varepsilon^{-2}\Phi_2^\varepsilon(s)+\Phi_1^\varepsilon(s)\right\}~ds\\
&\qquad +R_{k-1}(\varepsilon^{-2})\int_{t_{k-1}}^{t_k}\int_{t_{k-1}}^{s}\Phi_1^\varepsilon(u)\,\df u\,\df s+R_{k-1}\left(\frac{1}{n\varepsilon^2}\right)\int_{t_{k-1}}^{t_k}\Phi_1^\varepsilon(s)\,\df s\\
&\qquad+R_{k-1}\left(\frac{1}{n^3\varepsilon^2}\right)+R_{k-1}\left(\frac{\varepsilon^{-1}}{n^2\sqrt{n}}\right)+R_{k-1}\left(\frac{1}{n^2}\right)+R_{k-1}\left(\frac{\varepsilon}{n\sqrt{n}}\right)\bigg\}\Bigg] <\infty.
\end{align*}
(ii) Noticing that
\begin{align*}
P_k^iP_k^j(\theta)&=P_k^iP_k^j(\theta_0)+\frac{1}{n}P_k^i(\theta_0)B^j_{k-1}(\theta_0,\theta)+\frac{1}{n}P_k^j(\theta_0)B^i_{k-1}(\theta_0,\theta)+\frac{1}{n^2}B^i_{k-1}B^j_{k-1}(\theta_0,\theta),
\end{align*}
where $B^i_{k-1}(\theta_0,\theta)=b^i\left(X_{t_{k-1}},H_n(X_{t_{k-1}-\cdot}),\theta_0\right)-b^i\left(X_{t_{k-1}},H_n(X_{t_{k-1}-\cdot}),\theta\right)$. It follows from Lemmas \ref{convergence depend on n} and \ref{convergence of diffusion}(i), under $P$, as $n\to\infty$ and $\varepsilon\to0$,
\begin{align*}
\varepsilon^{-2} \sum_{k=1}^{n} f\left(X_{t_{k-1}}, H_n(X_{t_{k-1}-\cdot}), \theta\right) P_{k}^{i}P_{k}^{j}(\theta)&= \varepsilon^{-2} \sum_{k=1}^{n} f\left(X_{t_{k-1}}, H_n(X_{t_{k-1}-\cdot}), \theta\right)P_k^iP_k^j(\theta_0)\\
&\quad +\frac{\varepsilon^{-2}}{n^2}\sum_{k=1}^{n} f\left(X_{t_{k-1}}, H_n(X_{t_{k-1}-\cdot}), \theta\right)B^i_{k-1}B^j_{k-1}(\theta_0,\theta)\\
&\quad +\frac{1}{n\varepsilon^2}\sum_{k=1}^{n} \bigg[f\left(X_{t_{k-1}}, H_n(X_{t_{k-1}-\cdot}), \theta\right)\times\\
&\qquad \Big\{P_k^i(\theta_0)B^j_{k-1}(\theta_0,\theta)+P_k^j(\theta_0)B^i_{t_{k-1}}(\theta_0,\theta)\Big\}\bigg]\\
&\to \int_{0}^{1} f\left(X_{s}^0, H(X_{s - \cdot}^0), \theta\right) [\sigma \sigma^{\top}]^{ij}\left(X_{s}^0, H(X_{s - \cdot}^0),\beta_{0}\right)\,\df s,
\end{align*}
uniformly in $\theta\in\Theta$.
\end{proof}
\noindent
We are ready to prove Theorem \ref{consistency}. In Theorem \ref{consistency}, the proof ideas mainly follow S\o rensen and Uchida \cite{sorensen}.
\begin{proof}[\bf Proof of Theorem \ref{consistency}]
Following the proof of Theorem 1 in S\o rensen and Uchida \cite{sorensen}, the consistency follows from the two properties:
\begin{align}
&\varepsilon^{2} \{U_{n,\varepsilon}(\alpha, \beta)-U_{n,\varepsilon}(\alpha_0, \beta)\} \xrightarrow{P} \int_{0}^{1} B_s^{\top}(\theta_0, \theta) [\sigma \sigma^{\top}]^{-1}(X_{s}^0, H(X_{s - \cdot}^0),\beta_{0})B_s(\theta_0, \theta)\,\df s,\label{consistency of alpha}\\
&\frac{1}{n}U_{n,\varepsilon}(\alpha,\beta)\xrightarrow{P}\int_0^1\log\det[\sigma\sigma^{\top}]\Bigl(X_{s}^0,H(X_{s-\cdot}^0),\beta\Bigr)\,\df s\notag\\
&\qquad\qquad\qquad\qquad +\int_0^1\mathrm{tr}\bigg[[\sigma\sigma^{\top}]\Bigl(X_{s},H(X_{s-\cdot}),\beta\Bigr)[\sigma\sigma^{\top}]^{-1}\Bigl(X_{s},H(X_{s-\cdot}),\beta_{0}\Bigr)\bigg]\,\df s,\label{consistency of beta}
\end{align}
where $B_s(\theta_0, \theta)=b\left(X_s^0,H(X_{s-\cdot}^0),\theta_0\right)-b\left(X_s^0,H(X_{s-\cdot}^0),\theta\right)$, as $\varepsilon\rightarrow 0$ and $n\rightarrow \infty,$ uniformly in $\theta\in \Theta.$ First, we show \eqref{consistency of alpha}. It is clear that
\begin{align*}
\varepsilon^{2} \{U_{n,\varepsilon}(\alpha, \beta)-U_{n,\varepsilon}(\alpha_0, \beta)\}&= n\sum_{k=1}^{n}\big(P_k(\theta)-P_k(\theta_0)\big)^{\top}\Xi_{k-1}^{-1}(\beta)\big(P_k(\theta)+P_k(\theta_0)\big)\\
&= \sum_{k=1}^{n}\bigg[\Big(b\left(X_{t_{k-1}},H_n(X_{t_{k-1}-\cdot}),\theta_0\right)-b\left(X_{t_{k-1}},H_n(X_{t_{k-1}-\cdot}),\theta\right)\Big)^{\top}\\
&\quad \Xi_{k-1}^{-1}(\beta)\bigg\{2\Big(\Delta_k X-\frac{1}{n}b\left(X_{t_{k-1}},H_n(X_{t_{k-1}-\cdot}),\theta_0\right)\Big)\\
&\qquad +\frac{1}{n}\Big(b\left(X_{t_{k-1}},H_n(X_{t_{k-1}-\cdot}),\theta_0\right)-b\left(X_{t_{k-1}},H_n(X_{t_{k-1}-\cdot}),\theta\right)\Big)\bigg\}\bigg].
\end{align*}
From Lemma \ref{convergence depend on n},
\begin{equation*}
\varepsilon^{2} \{U_{n,\varepsilon}(\alpha, \beta)-U_{n,\varepsilon}(\alpha_0, \beta)\} \xrightarrow{P} \int_{0}^{1} B_s^{\top}(\theta_0, \theta) [\sigma \sigma^{\top}]^{-1}(X_{s}^0, H(X_{s - \cdot}^0),\beta_{0})B_s(\theta_0, \theta)\,\df s,
\end{equation*}
as $\varepsilon\rightarrow 0$ and $n\rightarrow \infty,$ uniformly in $\theta\in \Theta.$ About \eqref{consistency of beta}, from Lemma \ref{convergence depend on n}(i) and Lemma \ref{convergence of diffusion}(ii), as $\varepsilon\rightarrow 0$ and $n\rightarrow \infty,$ it holds that
\begin{align*}
\frac{1}{n}U_{n,\varepsilon}(\alpha,\beta)\xrightarrow{P}&\int_0^1\log\det[\sigma\sigma^{\top}]\Bigl(X_{s}^0,H(X_{s-\cdot}^0),\beta\Bigr)\,\df s\\
&\quad +\int_0^1\mathrm{tr}\bigg[[\sigma\sigma^{\top}]\Bigl(X_{s},H(X_{s-\cdot}),\beta\Bigr)[\sigma\sigma^{\top}]^{-1}\Bigl(X_{s},H(X_{s-\cdot}),\beta_{0}\Bigr)\bigg]\,\df s.
\end{align*}
\end{proof}

\noindent
Finally, we prove the asymptotic normality of $\wh{\theta}_{n,\varepsilon}$. For the proof, we shall use the notations:
\begin{enumerate}
\renewcommand{\labelenumi}{(N\theenumi)}
\setcounter{enumi}{\value{enumimemory}}
\item We denote
\begin{equation*}
\Lambda_{n,\varepsilon}(\theta_0):=
\left(
\begin{array}{cc}
-\varepsilon\left(\frac{\partial}{\partial\alpha_i}U_{n,\varepsilon}(\theta)\Big|_{\theta=\theta_0}\right)_{1\leq i\leq p} \\
-\frac{1}{\sqrt{n}}\left(\frac{\partial}{\partial\beta_i}U_{n,\varepsilon}(\theta)\Big|_{\theta=\theta_0}\right)_{1\leq i\leq q} \\
\end{array}
\right).
\end{equation*}
\item We denote
\begin{equation*}
C_{n,\varepsilon}(\theta_0):=
\left(
\begin{array}{cc}
\varepsilon^2\left(\frac{\partial^2}{\partial\alpha_i\alpha_j}U_{n,\varepsilon}(\theta)\Big|_{\theta=\theta_0}\right)_{1\leq i,j\leq p} &\frac{\varepsilon}{\sqrt{n}}\left(\frac{\partial^2}{\partial\alpha_i\beta_j}U_{n,\varepsilon}(\theta)\Big|_{\theta=\theta_0}\right)_{1\leq i\leq p,1\leq j\leq q}\\
\frac{\varepsilon}{\sqrt{n}}\left(\frac{\partial^2}{\partial\beta_i\alpha_j}U_{n,\varepsilon}(\theta)\Big|_{\theta=\theta_0}\right)_{1\leq i\leq p,1\leq j\leq q}&\frac{1}{n}\left(\frac{\partial^2}{\partial\beta_i\beta_j}U_{n,\varepsilon}(\theta)\Big|_{\theta=\theta_0}\right)_{1\leq i,j\leq q} \\
\end{array}
\right).
\end{equation*}
\end{enumerate}
In Theorem \ref{asymptotic normal}, the proof ideas mainly follow S\o rensen and Uchida \cite{sorensen}.
\begin{proof}[\bf Proof of Theorem \ref{asymptotic normal}]
By Theorem 1 in S\o rensen and Uchida\cite{sorensen}, the asymptotic normality follows from the three properties:
\begin{gather}
C_{n,\varepsilon}(\theta_0)\xrightarrow{P}2I(\theta_0),\label{check1 of a.n.}\\
\sup_{|\theta|\leq \eta_{n,\varepsilon}}|C_{n,\varepsilon}(\theta_0+\theta)-C_{n,\varepsilon}(\theta_0)|\xrightarrow{P}0, \label{check2 of a.n.}
\end{gather}
where $\eta_{n,\varepsilon}\to0$, as $\varepsilon\rightarrow 0$ and $n\rightarrow \infty$, and
\begin{equation}
\Lambda_{n,\varepsilon}(\theta_0)\xrightarrow{d} N(0,4I(\theta_{0})), \label{check3 of a.n.}
\end{equation}
as $\varepsilon\rightarrow 0$ and $n\rightarrow \infty$. First, we show \eqref{check1 of a.n.}. Note that
\begin{align}
\varepsilon^2\frac{\partial^2}{\partial\alpha_i\partial\alpha_j}U_{n,\varepsilon}(\theta)&= -2\sum_{k=1}^{n}\Bigg[\bigg(\frac{\partial^2}{\partial\alpha_i\partial\alpha_j}b_{t_{k-1},H_n}(\theta)\bigg)^{\top}\Xi_{k-1}^{-1}(\beta)\notag\\
&\qquad \Big\{P_k(\theta_0)+\frac{1}{n}\Big(b_{t_{k-1},H_n}(\theta_0)-b_{t_{k-1},H_n}(\theta)\Big)\Big\}\Bigg]\notag\\
&\quad +\frac{2}{n}\sum_{k=1}^{n}\bigg(\frac{\partial}{\partial\alpha_i}b_{t_{k-1},H_n}(\theta)\bigg)^{\top}\Xi_{k-1}^{-1}(\beta)\bigg(\frac{\partial}{\partial\alpha_j}b_{t_{k-1},H_n}(\theta)\bigg),\label{alphaalpha}
\end{align}
\begin{align}
&\frac{\varepsilon}{\sqrt{n}}\frac{\partial^2}{\partial\alpha_i\partial\beta_j}U_{n,\varepsilon}(\theta)\notag\\
= &\frac{-2}{\varepsilon\sqrt{n}}\sum_{k=1}^{n}\Bigg\{\bigg(\frac{\partial^2}{\partial\alpha_i\partial\beta_j}b_{t_{k-1},H_n}(\theta)\bigg)^{\top}\Xi_{k-1}^{-1}(\beta)+\bigg(\frac{\partial}{\partial\alpha_i}b_{t_{k-1},H_n}(\theta)\bigg)^{\top}\bigg(\frac{\partial}{\partial\beta_j}\Xi_{k-1}^{-1}(\beta)\bigg)\Bigg\}\notag\\
&\quad \cdot\Big\{P_k(\theta_0)+\frac{1}{n}\Big(b_{t_{k-1},H_n}(\theta_0)-b_{t_{k-1},H_n}(\theta)\Big)\Big\}\notag\\
&+\frac{2}{\varepsilon n\sqrt{n}}\sum_{k=1}^{n}\bigg(\frac{\partial}{\partial\alpha_i}b_{t_{k-1},H_n}(\theta)\bigg)^{\top}\Xi_{k-1}^{-1}(\beta)\bigg(\frac{\partial}{\partial\beta_j}b_{t_{k-1},H_n}(\theta)\bigg),\label{alphabeta}
\end{align}
\begin{align}
&\frac{1}{n}\frac{\partial^2}{\partial\beta_i\partial\beta_j}U_{n,\varepsilon}(\theta)\notag\\
= &\frac{1}{n}\sum_{k=1}^{n}\frac{\partial^2}{\partial\beta_i\partial\beta_j}\log\det\Xi_{k-1}(\beta) -\frac{2\varepsilon^{-1}}{n}\sum_{k=1}^{n}\Bigg\{\bigg(\frac{\partial^2}{\partial\beta_i\partial\beta_j}b_{t_{k-1},H_n}(\theta)\bigg)^{\top}\Xi_{k-1}^{-1}(\beta)\notag\\
&\quad +\bigg(\frac{\partial}{\partial\beta_i}b_{t_{k-1},H_n}(\theta)\bigg)^{\top}\bigg(\frac{\partial}{\partial\beta_j}\Xi_{k-1}^{-1}(\beta)\bigg) +\bigg(\frac{\partial}{\partial\beta_j}b_{t_{k-1},H_n}(\theta)\bigg)^{\top}\bigg(\frac{\partial}{\partial\beta_i}\Xi_{k-1}^{-1}(\beta)\bigg)\Bigg\}\notag\\
&\quad \cdot\Big\{P_k(\theta_0)+\frac{1}{n}\Big(b_{t_{k-1},H_n}(\theta_0)-b_{t_{k-1},H_n}(\theta)\Big)\Big\}\notag\\
+&\frac{2\varepsilon^{-1}}{n^2}\sum_{k=1}^{n}\bigg(\frac{\partial}{\partial\beta_i}b_{t_{k-1},H_n}(\theta)\bigg)^{\top}\Xi_{k-1}^{-1}(\beta)\bigg(\frac{\partial}{\partial\beta_j}b_{t_{k-1},H_n}(\theta)\bigg)\notag\\
+&\varepsilon^{-2}\sum_{k=1}^{n}\big(P_k(\theta)\big)^{\top}\bigg(\frac{\partial^2}{\partial\beta_i\partial\beta_j}\Xi_{k-1}^{-1}(\beta)\bigg)P_k(\theta).\label{betabeta}
\end{align}
It follows from Lemma \ref{convergence depend on n} and Lemma \ref{convergence of diffusion}(ii), as $\varepsilon\rightarrow 0$ and $n\rightarrow \infty,$ that
\begin{flalign*}
&\varepsilon^2\frac{\partial^2}{\partial\alpha_i\partial\alpha_j}U_{n,\varepsilon}(\theta)&\\
\xrightarrow{P}& 2\int_0^1\bigg(\frac{\partial}{\partial\alpha_i}b\left(X_{s}^0,H(X_{s-\cdot}^0),\theta\right)\bigg)^{\top}\left[\sigma\sigma^{\top}\right]^{-1}\left(X_s^0,H(X_{s-\cdot}^0),\beta\right)\bigg(\frac{\partial}{\partial\alpha_j}b\left(X_{s}^0,H(X_{s-\cdot}^0),\theta\right)\bigg)\,\df s&\\
&-2\int_0^1\bigg(\frac{\partial^2}{\partial\alpha_i\partial\alpha_j}b(X_{s}^0,H(X_{s-\cdot}^0),\theta)\bigg)^{\top}\left[\sigma\sigma^{\top}\right]^{-1}\left(X_s^0,H(X_{s-\cdot}^0),\beta\right)B\left(X_s^0, \theta_0, \theta\right)\,\df s,&
\end{flalign*}
\begin{flalign*}
&\frac{\varepsilon}{\sqrt{n}}\frac{\partial^2}{\partial\alpha_i\partial\beta_j}U_{n,\varepsilon}(\theta)\xrightarrow{P}0,&
\end{flalign*}
\begin{flalign*}
&\frac{1}{n}\frac{\partial^2}{\partial\beta_i\partial\beta_j}U_{n,\varepsilon}(\theta)&\\
\xrightarrow{P}&\int_0^1\frac{\partial^2}{\partial\beta_i\partial\beta_j}\log\det\left[\sigma\sigma^{\top}\right]\left(X_s^0,H(X_{s-\cdot}^0),\beta\right)\,\df s&\\
&+\int_0^1\mathrm{tr}\Bigg[\left[\sigma\sigma^{\top}\right]\left(X_s^0,H(X_{s-\cdot}^0),\beta\right)\bigg(\frac{\partial^2}{\partial\beta_i\partial\beta_j}\left[\sigma\sigma^{\top}\right]^{-1}\left(X_s^0,H(X_{s-\cdot}^0),\beta\right)\bigg)\Bigg]\,\df s,&
\end{flalign*}
uniformly in $\theta\in\Theta$. About \eqref{check2 of a.n.}, the limit of \eqref{alphaalpha}, \eqref{alphabeta} and \eqref{betabeta} are continuous with respect to $\theta$, which completes the proof. Finally, we prove \eqref{check3 of a.n.}. We set
\begin{align*}
-\varepsilon\frac{\partial}{\partial \alpha_{i}}U_{n,\varepsilon}(\theta)\bigg|_{\theta=\theta_{0}}&= \sum_{k=1}^{n}2\varepsilon^{-1}\left(\frac{\partial}{\partial \alpha_{i}}b_{t_{k-1},H_n}\bigl(\theta_0\bigr)\right)^{\top}\Xi_{k-1}^{-1}(\beta_0)P_{k}(\theta_0)\\
&=\colon \sum_{k=1}^{n}\xi_{k}^{i}(\theta_{0}),
\end{align*}
\begin{align*}
-\frac{1}{\sqrt{n}}\frac{\partial}{\partial \beta_{i}}U_{n,\varepsilon}(\theta)\bigg|_{\theta=\theta_{0}}&= -\sum_{k=1}^{n}\frac{1}{\sqrt{n}}\frac{\partial}{\partial \beta_{i}}\log\det\Xi_{k-1}(\beta_0)\\
&\quad-\sum_{k=1}^{n}\sqrt{n}\varepsilon^{-2}\Bigl(P_{k}(\theta_{0})\Bigr)^{\top}\left(\frac{\partial}{\partial \beta_{i}}\Xi_{k-1}^{-1}(\beta_0)\right)P_{k}(\theta_{0})\\
&\quad +2\sum_{k=1}^{n}\frac{\varepsilon^{-2}}{\sqrt{n}}\left(\frac{\partial}{\partial \beta_{i}}b_{t_{k-1},H_n}\bigl(\theta_{0}\bigr)\right)^{\top}\Xi_{k-1}^{-1}(\beta_0)P_{k}(\theta_{0})\\
&=\colon \sum_{k=1}^{n}\eta_{k}^{i}(\theta_{0}).
\end{align*}
In view of Theorem 3.2 and 3.4 in Hall and Heyde \cite{hall}, it is sufficient to show that as $\varepsilon\to0$ and $n\to\infty$,
\begin{align}
&\sum_{k=1}^{n}E\left[\xi_{k}^{i}(\theta_{0})\Big|\mathcal{F}_{t_{k-1}}\right]\xrightarrow{P}0,\label{hall1}\\
&\sum_{k=1}^{n}E\left[\eta_{k}^{j}(\theta_{0})\Big|\mathcal{F}_{t_{k-1}}\right]\xrightarrow{P}0,\\
&\sum_{k=1}^{n}E\left[\xi_{k}^{i_{1}}(\theta_{0})\xi_{k}^{i_{2}}(\theta_{0})\Big|\mathcal{F}_{t_{k-1}}\right]\xrightarrow{P}4I_{b}^{i_{1}i_{2}}(\theta_{0}),\\
&\sum_{k=1}^{n}E\left[\eta_{k}^{j_{1}}(\theta_{0})\eta_{k}^{j_{2}}(\theta_{0})\Big|\mathcal{F}_{t_{k-1}}\right]\xrightarrow{P}4I_{\sigma}^{j_{1}j_{2}}(\theta_{0}),\\
&\sum_{k=1}^{n}E\left[\xi_{k}^{i}(\theta_{0})\eta_{k}^{j}(\theta_{0})\Big|\mathcal{F}_{t_{k-1}}\right]\xrightarrow{P}0,\\
&\sum_{k=1}^{n}E\left[(\xi_{k}^{i}(\theta_{0}))^{4}\Big|\mathcal{F}_{t_{k-1}}\right]\xrightarrow{P}0,\label{hall2}\\
&\sum_{k=1}^{n}E\left[(\eta_{k}^{j}(\theta_{0}))^{4}\Big|\mathcal{F}_{t_{k-1}}\right]\xrightarrow{P}0.\label{hall3}
\end{align}
From Lemma \ref{conditional expectation}, we obtain \eqref{hall1}-\eqref{hall2}. To prove \eqref{hall3}, we have several estimates as follows:
\begin{align}
&\big(\eta_{k}^{j}(\theta_{0})\big)^4\notag\\
\leq& 3^3\Bigg\{\frac{1}{n^2}\bigg(\frac{\partial}{\partial \beta_{j}}\log\det\Xi_{k-1}(\beta_0)\bigg)^4+n^2\varepsilon^{-8}(2d)^3\sum_{l_1,l_2}^{d}\Bigg[\bigg(\frac{\partial}{\partial\beta_j}\Xi_{k-1}^{-1}(\beta_0)\bigg)^{l_1l_2}\Bigg]^4\big(P_k^{l_1}P_k^{l_2}(\theta_0)\big)^4\notag\\
& +16n^{-2}\varepsilon^{-8}d^3\sum_{l_1}^{d}\Bigg[\bigg\{\biggl(\frac{\partial}{\partial \beta_{j}}b_{t_{k-1}, H_n}\bigl(\theta_{0}\bigr)\biggr)^{\top}\Xi_{k-1}^{-1}(\beta_0)\bigg\}^{l_1}\Bigg]^4\big(P_{k}^{l_1}(\theta_{0})\big)^4\Bigg\},\label{eta1}
\end{align}
\begin{align}
E\Big[\big(P_k^{l_1}P_k^{l_2}(\theta_0)\big)^4\Big|\mathcal{F}_{t_{k-1}}\Big]&\leq 3^3\bigg\{E\Big[\Big(\big(\Delta_k X\big)^{l_1}\big(\Delta_k X\big)^{l_2}\Big)^4\Big|\mathcal{F}_{t_{k-1}}\Big]\notag\\
&\quad +\frac{1}{n^4}\Big(b_{t_{k-1}, H_n}^{l_1}\big(\theta_{0}\big)\Big)^4E\Big[\Big(\big(\Delta_k X\big)^{l_2}\Big)^4\Big|\mathcal{F}_{t_{k-1}}\Big]\notag\\
&\quad +\frac{1}{n^4}\Big(b_{t_{k-1}, H_n}^{l_2}\big(\theta_{0}\big)\Big)^4E\Big[\big(P_k^{l_1}(\theta_0)\big)^4\Big|\mathcal{F}_{t_{k-1}}\Big]\bigg\}.\label{eta2}
\end{align}
In the same way as the proof of Lemma \ref{conditional expectation}, we have
\begin{align}
&E\bigg[\prod_{i=1}^{8}\Big(\Delta_k X^{\varepsilon}\Big)^{l_i}\bigg|\mathcal{F}_{t_{k-1}}\bigg]\notag\\
= &R_{k-1}\bigg(\frac{1}{n^8}\bigg)+R_{k-1}\bigg(\frac{\varepsilon^2}{n^7}\bigg)+R_{k-1}\bigg(\frac{\varepsilon^4}{n^6}\bigg)+R_{k-1}\bigg(\frac{\varepsilon^6}{n^5}\bigg)+R_{k-1}\bigg(\frac{\varepsilon^8}{n^4}\bigg)\notag\\
& +\int_{t_{k-1}}^{t_k}\{\Phi_8^\varepsilon(s_1)+\varepsilon^2\Phi_7^\varepsilon(s_1)\}\,\df s_1\notag\\
& +\sum_{i=1}^{6}\sum_{j=0}^{6-i}R_{k-1}(\varepsilon^{2j})\int_{t_{k-1}}^{t_k}\int_{t_{k-1}}^{s_1}\cdots\int_{t_{k-1}}^{s_i}\{\Phi_{8-i-j}^\varepsilon(s_{i+1})+\varepsilon^2\Phi_{7-i-j}^\varepsilon(s_{i+1})\}\,\df s_{i+1}\cdots\,\df s_1\notag\\
& +R_{k-1}(1)\int_{t_{k-1}}^{t_k}\int_{t_{k-1}}^{s_1}\cdots\int_{t_{k-1}}^{s_7}\Phi_1^\varepsilon(s_8)\,\df s_8\cdots\,\df s_1\notag\\
& +R_{k-1}\bigg(\frac{1}{n^9}\bigg)+R_{k-1}\bigg(\frac{\varepsilon}{n^8\sqrt{n}}\bigg)+R_{k-1}\bigg(\frac{\varepsilon^2}{n^8}\bigg)+R_{k-1}\bigg(\frac{\varepsilon^3}{n^7\sqrt{n}}\bigg)+R_{k-1}\bigg(\frac{\varepsilon^4}{n^7}\bigg)\notag\\
& +R_{k-1}\bigg(\frac{\varepsilon^5}{n^6\sqrt{n}}\bigg)+R_{k-1}\bigg(\frac{\varepsilon^6}{n^6}\bigg)+R_{k-1}\bigg(\frac{\varepsilon^7}{n^5\sqrt{n}}\bigg)+R_{k-1}\bigg(\frac{\varepsilon^8}{n^5}\bigg)+R_{k-1}\bigg(\frac{\varepsilon^9}{n^4\sqrt{n}}\bigg).\label{eta3}
\end{align}
It follows from Lemma \ref{conditional expectation}, \eqref{eta1}-\eqref{eta3} and the H\"{o}lder's inequality that
\begin{align*}
\sum_{k=1}^{n}E\left[(\eta_{k}^{j}(\theta_{0}))^{4}\Big|\mathcal{F}_{t_{k-1}}\right]\xrightarrow{P} 0,
\end{align*}
as $\varepsilon\to0$ and $n\to\infty$. We obtain the conclusion. 
\end{proof}

\begin{flushleft}
{\bf Compliance with Ethical Standards.} 
\end{flushleft}
\begin{itemize}
\item Funding: The study was funded by JSPS  KAKENHI Grant Number 21K03358. 

\item Conflict of Interest: Author Hiroki Nemoto declares that he has no conflict of interest.
Author Yasutaka Shimizu has received research grants from JSPS. 

\item Ethical approval: This article does not contain any studies with human participants performed by any of the authors.
\end{itemize}

\end{document}